\documentclass[
11pt,                          
draft,                         
english                        
]{article}


%
%

\usepackage[english]{babel}    
\usepackage{amsmath}           
\usepackage[utf8]{inputenc}    
\usepackage[T1]{fontenc}       
\usepackage{longtable}         
\usepackage{exscale}           
\usepackage[final]{graphicx}   
\usepackage[sort]{cite}        
\usepackage{array}             
\usepackage{wasysym}           
\usepackage[a4paper]{geometry} 
\usepackage[multiuser]{fixme}  
\usepackage{xspace}            
\usepackage{tikz}              
\usepackage{ifdraft}           
\usepackage{chairx}            
\usepackage[expansion=false    
           ]{microtype}        
\usepackage[nottoc]{tocbibind} 
 \usepackage[backref=page,      
            final=true,         
           pdfpagelabels       
           ]{hyperref}         
\usepackage[all]{xy}
\usepackage{slashed}
\usepackage{faktor}
\usepackage{amssymb}
\usetikzlibrary{matrix}
\usepackage{tikz-cd}

%
%
\usepackage{todonotes}

\usepackage{bbm}

\geometry{bindingoffset=0cm}
\geometry{hcentering=true}
\geometry{hscale=0.8}
\geometry{vscale=0.8}

%
%

\ifdraft{\synctex=1}{}

%
%

\fxusetheme{color}

\newcommand{\bla}{\langle \hspace{-2.7pt} \langle}
\newcommand{\bra}{\rangle\hspace{-2.7pt} \rangle}

\title{Normal Forms for Dirac-Jacobi bundles and Splitting Theorems for Jacobi Structures}
\date{}
\author{
\textbf{Jonas Schnitzer}\thanks{\texttt{jschnitzer@unisa.it}}\\[0.5cm]
 Dipartimento di Matematica \\
  Università degli Studi di Salerno \\
  Via Giovanni Paolo II, 132\\
  84084  Fisciano (SA) \\
  Italy}

\begin{document}

\maketitle

\begin{abstract}
The aim of this paper is to prove a normal form Theorem for Dirac-Jacobi bundles using the recent techniques from 
\cite{2016arXiv160505386B}. As the most important consequence, we can prove the splitting theorems of Jacobi pairs 
which was proposed by Dazord, Lichnerowicz and Marle in \cite{SplittingThmJac}. As an application we provide a alternative
proof of the splitting theorem of homogeneous Poisson structures.  
\end{abstract}

\tableofcontents

\section{Introduction}

Since the work of Weinstein \cite{weinstein1983}, in which he proved his famous local splitting theorem for Poisson 
manifolds, many works appeared concerning different viewpoints on the proof and even give more general statements, namely 
normal form theorems. Frejlich and Marcut proved a normal form theorem around Poisson (cosymplectic) transversals of Poisson 
manifolds in \cite{FM2017}. In \cite{FM2018} they used the techniques of Dual Pairs to prove a similar statement for Dirac 
structures. And finally, there is a unified approach by Bursztyn, Lima and Meinrenken in \cite{2016arXiv160505386B} to prove 
normal forms for Poisson related structures. 

Jacobi geometry was introduced by Kirillov in \cite{K1976} as local Lie algebras. They have a deep connection to Poisson 
geometry, since every Poisson structure defines a Jacobi bracket. Moreover, every Jacobi structure induces a Poisson 
structure on a manifold of one dimension more, this is known as the symplectization or homogenezation, see \cite{BGG2017} and 
its references for a detailed discussion.  
In Jacobi geometry there is also a local splitting theorem available, which was proven  
by Dazord, Lichnerowicz and Marle in \cite{SplittingThmJac}. Nevertheless, after this work the parallels in the work of Poisson and Jacobi geometry stopped, at least in the context of local structure. The aim of this paper is to fill these 
gaps, prove normal form theorems for Jacobi bundles and give a more intrinsic proof of the splitting theorems. 
To do so, we will chose the approach of \cite{2016arXiv160505386B} and start with so-called Dirac-Jacobi bundles which generalize the notion of Jacobi structures.

Dirac-Jacobi bundles were introduced in \cite{DirJacBun} by Vitagliano and are a slight generalization of Wade's 
$\mathcal{E}^1(M)$-Dirac structures (see \cite{Wade2000}). Moreover, these bundles are a Dirac theoretic generalizations of 
Jacobi bundles, as usual Dirac structures are for Poisson manifolds.

We want to stress that the methods, which are expressed in this note are also 
suitable for proving splittings for involutive fat anchored vector bundles $(E,L\to M,\rho)$, i.e. a vector bundle $E\to M$, 
a line bundle $L\to M$ and a bundle map $\rho\colon E\to DL$, such that $\Secinfty(\rho(E))$ is closed with respect to the 
bracket, as well as Jacobi-algebroids (see \cite{2017arXiv170508962T}). We do not want to treat that in detail since every 
involutive fat anchored vector bundle is in particular, by composing the anchor $\rho$ with the anchor of $DL$, an involutive 
anchored vector bundle and can be treated with the methods in \cite{2016arXiv160505386B}. The same holds true for Jacobi- 
algebroids.  
 
This short note is organized as follows: we recall the necessary structures in order to define the setting for Dirac-Jacobi 
structures, the omni-Lie algebroid of a line bundle (see \cite{CHEN2010799}) in Section \ref{Sec: Not}.
Afterwards, we introduce the notion of 
Euler-like derivations, which are the crucial ingredient for the proofs of the main theorems. After this we are able to 
provide a normal form theorem for Dirac-Jacobi bundles, which is the main part of Section \ref{Sec: NormFroDJ}. In the following section, we want to apply this normal form theorem 
to the special case of Jacobi bundles, which allows us to state and prove two normal form theorems for Jacobi bundles, which 
allow us to give a different prove of the splitting theorems of Jacobi pairs, first provided in \cite{SplittingThmJac}. 
Moreover, we can apply this theorems to provide a splitting theorem for homogeneous Poisson structure around points where 
the homogeneity does not vanish, which was also done in \cite{SplittingThmJac}. 
Note that in \cite{SplittingThmJac} the proof works exactly the other way around: they prove
a local splitting of homogeneous Poisson structures and use it to prove the splitting of Jacobi structures. 

\noindent
\textbf{Acknowledgements:} I would like to thank my advisor, Luca Vitagliano, who suggested me this project and helped me a 
lot in turning it in to a paper as well as Chiara Esposito who helped me to improve the presentation. 
The content of  this
note was  produced almost completely during a stay at IMPA in Rio de Janeiro from April to July in 2018, where I was warmly 
received in the Poisson Geometry group. In particular,  I would like to thank Henrique Bursztyn for discussions and useful 
suggestions.

\section{Preliminaries and Notation}\label{Sec: Not}
This introductory section is divided into two parts: first we recall the Atiyah algebroid of a vector bundle and the 
corresponding $Der$-complex with applications to contact and Jacobi geometry. Afterwards, we introduce the arena for 
the so-called Dirac-Jacobi bundles in odd dimensions, the omni-Lie algebroids, and give a quick reminder of Dirac-Jacobi 
bundles together with the properties we will need afterwards.    

\subsection{Notation and a brief reminder on Jacobi Geometry}
The notions of  Atiyah algebroid of a vector bundle and the associated $Der$-complex are known and are used in many 
other situations. This section is basically meant to fix notation. A more complete introduction to this  can be found in 
\cite{DirJacBun} and its references. Nevertheless, 
the notion of Omni-Lie algebroids was first defined in \cite{CHEN2010799}, in order to study Lie algebroids and local Lie 
algebra structures on vector bundles.

For a vector bundle $E\to M$, we denote its \emph{gauge} or \emph{Atiyah} algebroid by $DE\to M$ and by 
$\sigma\colon DE\to TM$ its anchor. Note that $D$ is a functor from the category of vector bundles with regular, i.e. 
fiberwise invertible, vector bundle morphisms to Lie algebroids. Hence, we denote for a regular $\Phi\colon E\to E'$ by 
	\begin{align*}
	D\Phi\colon DE\to DE'
	\end{align*}
the corresponding Lie algebroid morphism. 
We are mostly dealing with line bundles $L\to M$ for which we have the identity 
$DL=(J^1L)^*\tensor L$, where $J^1L$ is the first jet bundle. The gauge algebroid $DL\to M$ has a (tautological) Lie 
algebroid representation on $L$. The corresponding complex is denoted by 
	\begin{align*}
	\Bigg(\Omega_L^\bullet (M)=\Secinfty(\Anti^\bullet (DL)^*\tensor L), \D_L\bigg).
	\end{align*}
We briefly discuss Jacobi brackets in this setting. A Jacobi bracket is a local Lie algebra structure on the smooth sections 
of a line bundle $L\to M$, i.e. a Lie bracket $\{-,-\}\colon \Secinfty(L)\times \Secinfty(L)\to \Secinfty(L)$, such that
	\begin{align*}
	\{\lambda,-\}\in \Secinfty(DL).
	\end{align*}	 

\begin{remark}
Let $\{-,-\}$ be a Jacobi bracket on a line bundle $L\to M$. Then  there is a unique tensor, called the Jacobi tensor, $J\in 
\Secinfty(\Anti^2(J^1L)^*\otimes L )$, such that 	
	\begin{align*}
	\{\lambda,\mu\}=J(j^1\lambda, j^1 \mu)
	\end{align*}	 
for $\lambda,\mu\in\Secinfty(L)$. Conversely, every $L$-valued $2$-form $J$ on $J^1 L$ defines a skew-symmetric bilinear bracket $\{-,-\}$, but the latter needs not to be a Jacobi bracket. Specifically, it does not need to fulfill the Jacobi identity. However, there is the  notion of a Gerstenhaber-Jacobi bracket 
	\begin{align*}
	[-,-]\colon \Secinfty(\Anti^i(J^1L)^*\otimes L )\times \Secinfty(\Anti^j(J^1L)^*\otimes L )\to
	\Secinfty(\Anti^{i+j-1}(J^1L)^*\otimes L ), 
	\end{align*}
such that the Jacobi identity of $\{-,-\}$ is equivalent to $[J,J]=0$	
see \cite[Chapter 1.3]{2017arXiv170508962T} for a detailed discussion. Finally, a Jacobi tensor defines  a map 
$J^\sharp\colon J^1L\to (J^1L)^*\otimes L=DL$.   
\end{remark}

When $L$ is the trivial line bundle, than the notion of Jacobi bracket boils down to that of \emph{Jacobi pair}.

\begin{remark}[Trivial Line bundle]\label{Rem: TrivLine}
Let $\mathbb{R}_M\to M$ be the trivial line bundle and let 
$J$ be a Jacobi tensor on it. 
Let us denote by $1_M\in \Secinfty(\mathbb{R}_M)$ the canonical global section. 
Using the canonical connection 
	\begin{align*}
	\nabla\colon TM\ni v \mapsto (f\cdot1_M\mapsto v(f)1_M)\in D\mathbb{R}_M,
	\end{align*}
we can see that $DL\cong TM\oplus \mathbb{R}_M$ and hence 
	\begin{align*}
	J^1\mathbb{R}_M=(D\mathbb{R}_M)^*\otimes \mathbb{R}_M= T^*M\oplus \mathbb{R}_M.
	\end{align*}	 
With this splitting, we see that 
	\begin{align*}
	J=\Lambda + \mathbbm{1}\wedge E
	\end{align*}	 	
for some $(\Lambda,E)\in\Secinfty(\Anti^2 TM\oplus TM)$. The Jacobi identity is equivalent to $[\Lambda,\Lambda]+E\wedge
\Lambda=0$ and $\Lie_E\Lambda=0$. The pair $(\Lambda,E)$ is often referred to as \emph{Jacobi pair}.
  Moreover, if we denote by $
\mathbbm{1}^*\in \Secinfty(J^1\mathbb{R}_M)$ the canonical 
section then we can write any 
$\psi\in J^1\mathbb{R}_M$ as $\psi=\alpha+r\mathbbm{1}^*\in \Secinfty(J^1\mathbb{R}_M) $, 
for some $\alpha\in T^*M$ and $r\in \mathbb{R}$. We obtain 
	\begin{align*}
	J^\sharp(\alpha+r\mathbbm{1}^*)=\Lambda^\sharp(\alpha)+r E-\alpha(E)\mathbbm{1}.	
	\end{align*}	
A more detailed discussion about Jacobi structures on trivial line bundles can be found in
 \cite[Chapter 2]{2017arXiv170508962T}. In a similar way, we can see that $\Omega_L(M)^\bullet=\Secinfty(\Anti^\bullet(T^*M\oplus \mathbb{R}_M))=\Secinfty(\Anti^\bullet T^*M \oplus \mathbbm{1}^*\wedge \Anti^{\bullet-1}T^*M)$. Here $\mathbbm{1}^*$ 
 is the canonical section of $\mathbb{R}_M$, moreover the differential $\D_{\mathbb{R}_M}$ is defined by the relations 
 	\begin{align*}
 	\D_{\mathbb{R}_M}(\mathbbm{1}^*)=0 \ \text{ and } \ \D_{\mathbb{R}_M}=\D_{dR}+\mathbbm{1}^*\wedge .
 	\end{align*}
\end{remark}

\subsection{The Omni-Lie Algebroid of a line bundle and its automorphisms}

The omni-Lie algebroid plays the same role as the generalized tangent bundle does in Dirac geometry. In fact, the parallels 
are evidently enormous. Moreover, since the canonical inner product of it will be line-bundle valued, one can easily drop the word \emph{local} Courant algebroid. 
Note that the following definitions and Lemmas are obvious adaptions of the case of $H$-twisted Dirac structure, this is why 
we omit proofs. The non-twisted versions of the following definitions and resulats in Dirac-Jacobi geometry can be found in 
\cite{DirJacBun}.  

\begin{definition}
Let $L\to M$ be a line bundle and let $H\in\Omega_L^3(M)$ be closed. The vector bundle $\mathbb{D}
L:=DL\oplus J^1 L$ together with 
	\begin{enumerate}
	\item the (Dorfman-like, H-twisted) bracket
		\begin{align*}
		[\![(\Delta_1,\psi_1) ,(\Delta_2,\psi_2 )]\!]_H
		=([\Delta_1,\Delta_2],\Lie_{\Delta_1} \psi_2- \iota_{\Delta_2}\D_L\psi_1+\iota_{\Delta_1}\iota_{\Delta_2}H)
		\end{align*}		 
	\item the non-degenerate $L$-valued pairing 
		\begin{align*}
		\bla (\Delta_1,\psi_1) ,(\Delta_2,\psi_2 )\bra := \psi_1(\Delta_2)+\psi_2(\Delta_1)
		\end{align*}
	\item the canonical projection $\pr_D\colon \mathbb{D}L\to DL$ 
	\end{enumerate}	
is called the $H$-twisted Omni-Lie algebroid of $L\to M$.  
\end{definition}

\begin{remark}
If $H=0$, we will refer to $(\mathbb{D}L,[\![- ,-]\!],\bla - ,-\bra)$ as the omni-Lie algebroid.
\end{remark}

We shall now introduce automorphisms of the omni-Lie algebroid, which mirrors the definition of automorphisms of the 
generalized tangent bundle.
\begin{definition}\label{Def: CJ-Aut}
Let $L\to M$ be line bundle and let $H\in \Omega^3_L(M)$ be closed. A pair $(F,\Phi)\in \Aut(\mathbb{D}L)\times \Aut(L)$ 
is called (H-twisted) Courant-Jacobi 
automorphism, if 
	\begin{enumerate}
	\item $D\Phi\colon \pr_D=\pr_D\circ F$
	\item $ \Phi^*\bla-,-\bra=\bla F-,F-\bra$
	\item $F^* [\![-,-]\!]_H=[\![F^*-,F^*- ]\!]_H$
	\end{enumerate}
The group of H-twisted Courant-Jacobi automorphisms is denoted by $\Aut_{CJ}^H(L)$.
\end{definition}

For a line bundle $L\to M$ and $\Phi\in \Aut(L)$, we define 
	 \begin{align*}
	 \mathbb{D}\Phi\colon \mathbb{D}L\ni(\Delta,\alpha)\mapsto (D\Phi(\Delta), (D\Phi^{-1})^*\alpha)\in \mathbb{D}L,
	 \end{align*}
which gives canonically an automorphism $\mathbb{D}\Phi\in \Aut(\mathbb{D}L)$. Moreover, the pair $(\mathbb{D}\Phi,\Phi)$ 
fulfills conditions $i.)$ and $ii.)$ in Definition \ref{Def: CJ-Aut}, nevertheless it is not an (H-twisted) a Courant-Jacobi 
automorphism for an arbitrary $H$. 
For a 2-form $B\in \Omega_L^2(M)$, we define 
	\begin{align*}
	\exp(B)\colon \mathbb{D}L\ni (\Delta,\alpha)\mapsto (\Delta,\alpha+\iota_\Delta B)\in \mathbb{D}L,
	\end{align*}
which also fulfills conditions $i.)$ and $ii.)$ in Definition \ref{Def: CJ-Aut}, seen as pair $(\exp(B),\id)$. 
We can combine this two special kinds of morphisms together with an $H$-dependent action on $\mathbb{D}L$ and find the 
following 

\begin{lemma}\label{Lem: IsoCJ-Aut}
Let $L\to M$ be a line bundle and let $H\in \Omega_L^3(M)$ be closed. If we denote by $Z^2_L(M)$ the closed 2-forms, then
	\begin{align*}
	\mathcal{I}_H\colon Z^2_L(M)\rtimes \Aut(L)\ni (B,\Phi)\mapsto
	 (\exp(B+\iota_\mathbbm{1}(H-\Phi_*H))\circ \mathbb{D}\Phi,\Phi)\in \Aut_{CJ}^H(L)
	\end{align*} 
is an ismorphism of groups. 
\end{lemma}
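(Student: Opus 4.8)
The plan is to verify in turn that $\mathcal{I}_H$ takes values in $\Aut_{CJ}^H(L)$, that it is a group homomorphism, and that it is bijective. The whole argument rests on two transformation laws for the twisted Dorfman bracket, which are the Dirac-Jacobi analogues of the familiar Dirac-geometric ones and which I would record first. For a $2$-form $B\in\Omega_L^2(M)$, a direct computation using the Cartan identities $[\Lie_\Delta,\iota_{\Delta'}]=\iota_{[\Delta,\Delta']}$ and $\Lie_\Delta=\iota_\Delta\D_L+\D_L\iota_\Delta$ on $\Omega_L^\bullet(M)$ gives
\begin{align*}
[\![\exp(B)s_1,\exp(B)s_2]\!]_H=\exp(B)\,[\![s_1,s_2]\!]_{H-\D_L B},
\end{align*}
while functoriality of $D$ and naturality of $\D_L$ yield
\begin{align*}
\mathbb{D}\Phi\,[\![s_1,s_2]\!]_H=[\![\mathbb{D}\Phi s_1,\mathbb{D}\Phi s_2]\!]_{\Phi_*H}.
\end{align*}
The decisive ingredient is the behaviour of the Euler section $\mathbbm{1}\in\Secinfty(DL)$: since it corresponds to $\id_L$ it satisfies $\Lie_\mathbbm{1}=\id$, i.e. $\iota_\mathbbm{1}\D_L+\D_L\iota_\mathbbm{1}=\id$ on $\Omega_L^\bullet(M)$, so that for the closed $3$-form $H-\Phi_*H$ one gets $\D_L\big(\iota_\mathbbm{1}(H-\Phi_*H)\big)=H-\Phi_*H$.

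Set $\tilde B:=B+\iota_\mathbbm{1}(H-\Phi_*H)$ and $F:=\exp(\tilde B)\circ\mathbb{D}\Phi$. Conditions i.) and ii.) of Definition \ref{Def: CJ-Aut} hold for $(F,\Phi)$ because they hold separately for $(\exp(\tilde B),\id)$ and $(\mathbb{D}\Phi,\Phi)$ and are stable under composition: $\pr_D\circ\exp(\tilde B)=\pr_D$ and $\exp(\tilde B)$ preserves the pairing, while $\mathbb{D}\Phi$ realises $D\Phi$ on the $DL$-part and intertwines the pairing with $\Phi^*$. For condition iii.) I would combine the two transformation laws: since $\D_L B=0$ one has $\D_L\tilde B=H-\Phi_*H$, hence
\begin{align*}
[\![Fs_1,Fs_2]\!]_H=\exp(\tilde B)\,[\![\mathbb{D}\Phi s_1,\mathbb{D}\Phi s_2]\!]_{H-\D_L\tilde B}=\exp(\tilde B)\,[\![\mathbb{D}\Phi s_1,\mathbb{D}\Phi s_2]\!]_{\Phi_*H}=\exp(\tilde B)\,\mathbb{D}\Phi\,[\![s_1,s_2]\!]_H=F\,[\![s_1,s_2]\!]_H,
\end{align*}
where the key cancellation $H-\D_L\tilde B=\Phi_*H$ is exactly the place where closedness of $B$ enters. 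Thus $\mathcal{I}_H(B,\Phi)\in\Aut_{CJ}^H(L)$.

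For the homomorphism property I would commute the two kinds of factors. From the definitions one checks $\mathbb{D}\Phi\circ\exp(B)=\exp(\Phi_*B)\circ\mathbb{D}\Phi$, and using additivity of $\exp$ together with functoriality $\mathbb{D}\Phi_1\circ\mathbb{D}\Phi_2=\mathbb{D}(\Phi_1\Phi_2)$ one obtains, with $\tilde B_i=B_i+\iota_\mathbbm{1}(H-(\Phi_i)_*H)$,
\begin{align*}
F_1\circ F_2=\exp\!\big(\tilde B_1+(\Phi_1)_*\tilde B_2\big)\circ\mathbb{D}(\Phi_1\Phi_2).
\end{align*}
Because $D\Phi_1(\mathbbm{1})=\mathbbm{1}$, the operator $(\Phi_1)_*$ commutes with $\iota_\mathbbm{1}$, so the cocycle terms telescope to $\tilde B_1+(\Phi_1)_*\tilde B_2=\big(B_1+(\Phi_1)_*B_2\big)+\iota_\mathbbm{1}\big(H-(\Phi_1\Phi_2)_*H\big)$, which is precisely $\mathcal{I}_H\big((B_1,\Phi_1)(B_2,\Phi_2)\big)$ for the semidirect product with the pushforward action. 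Hence $\mathcal{I}_H$ is a homomorphism.

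It remains to prove bijectivity, which I expect to be the main obstacle. Injectivity is immediate, since equality of the $\Aut(L)$-components forces $\Phi_1=\Phi_2$ and then $\exp(\tilde B_1)=\exp(\tilde B_2)$ gives $B_1=B_2$. For surjectivity, given $(F,\Phi)\in\Aut_{CJ}^H(L)$, set $G:=F\circ\mathbb{D}\Phi^{-1}$; by i.) the map $F$ covers the base diffeomorphism of $\Phi$, so $G$ covers $\id_M$ and $\id_L$ and satisfies $\pr_D\circ G=\pr_D$ while preserving the pairing. A short fibrewise computation then forces $G=\exp(B_0)$ for a unique $B_0\in\Omega_L^2(M)$: the pairing condition pins the $J^1L$-block of $G$ to the identity and makes the block $DL\to J^1L$ skew, i.e. contraction with a $2$-form. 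Feeding $F=\exp(B_0)\circ\mathbb{D}\Phi$ into condition iii.) and running the transformation laws backwards yields $[\![s_1,s_2]\!]_{(\Phi_*)^{-1}(H-\D_L B_0)}=[\![s_1,s_2]\!]_H$ for all sections; comparing jet-components, which differ by $\iota_{\Delta_1}\iota_{\Delta_2}$ of the difference of twists, forces $\D_L B_0=H-\Phi_*H$ by non-degeneracy. Therefore $B:=B_0-\iota_\mathbbm{1}(H-\Phi_*H)$ is closed and $\mathcal{I}_H(B,\Phi)=(F,\Phi)$. The genuinely delicate points are the normal-form step $G=\exp(B_0)$ and the extraction of closedness of $B$ from condition iii.); once these are in place the isomorphism follows.
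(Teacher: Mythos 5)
Your proof is correct, and it supplies in full the argument that the paper deliberately omits (the lemma is stated without proof, being declared an ``obvious adaptation'' of the $H$-twisted Dirac case). The two transformation laws you isolate, together with the contracting-homotopy identity $\iota_{\mathbbm{1}}\D_L+\D_L\iota_{\mathbbm{1}}=\id$ (which is exactly the line-bundle-specific ingredient explaining the correction term $\iota_{\mathbbm{1}}(H-\Phi_*H)$), the telescoping of the cocycle via $D\Phi(\mathbbm{1})=\mathbbm{1}$, and the normal-form step $G=\exp(B_0)$ extracted from conditions i.) and ii.) in surjectivity are precisely the intended chain of reasoning; I checked the sign in $[\![\exp(B)s_1,\exp(B)s_2]\!]_H=\exp(B)[\![s_1,s_2]\!]_{H-\D_L B}$ against the paper's conventions and it is consistent.
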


In a similar way, we can define infinitesimal automorphisms of the Omni lie algebroid
\begin{definition}
Let $L\to M$ be line bundle and let $H\in \Omega^3_L(M)$ be closed. A pair $(D,\Delta)\in \Secinfty(D\mathbb{D}L)\times \Secinfty(DL)$ 
is called infinitesimal (H-twisted) Courant-Jacobi automorphism, if
	\begin{enumerate}
	\item $[\Delta,\pr_D(\varepsilon)]=\pr_D(D(\varepsilon))$
	\item $ \Delta\bla\varepsilon,\chi\bra=\bla D(\varepsilon),\xi\bra + \bla\epsilon,D(\chi)\bra$
	\item $D ([\![\varepsilon,\chi]\!]_H)=[\![D(\epsilon),\chi]\!]_H+[\![\epsilon,D(\chi)]\!]_H$
	\end{enumerate}
for all $\varepsilon,\chi\in \Secinfty(\mathbb{D}L)$.
The lie algebra of infinitesimal (H-twisted) Courant-Jacobi automorphisms is denoted by $\mathfrak{aut} _{CJ}^H(L)$.
\end{definition} 
Note that it is obvious, that the flow of an infinitesimal (H-twisted) Courant-Jacobi autmorphism gives a Courant-Jacobi 
automorphism, in this sense, we can see $\mathfrak{aut}_{CJ}^H(L)$ as the Lie algebra of $\Aut_{CJ}^H(L)$. Similarly to the 
autmorphism case, we have 
\begin{lemma}
Let $L\to M$ be line bundle and let $H\in \Omega^3_L(M)$ be closed. Then 
	\begin{align*}
	\mathfrak{i}_H\colon Z^2_L(M)\rtimes \Secinfty(DL)\ni (B,\Delta)\to ((\Box,\beta)\mapsto ([\Delta,\Box], 
	\Lie_\Delta\beta + \iota_\Box(B-\Lie_\Delta\iota_\mathbbm{1}H)))\in \mathfrak{aut}_{CJ}^H(L)
	\end{align*}
is an isomorphism of Lie algebras.
\end{lemma}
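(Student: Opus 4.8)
The plan is to regard $\mathfrak{i}_H$ as the infinitesimal counterpart of the group isomorphism $\mathcal{I}_H$ of Lemma \ref{Lem: IsoCJ-Aut}. Differentiating $\mathcal{I}_H(B_t,\Phi_t)$ at $(B_0,\Phi_0)=(0,\id)$ with $\dot B_0=B$ and $\dot\Phi_0=\Delta$ turns the factor $\mathbb{D}\Phi_t$ into the tautological action $(\Box,\beta)\mapsto([\Delta,\Box],\Lie_\Delta\beta)$ and the exponential factor into a gauge term $(\Box,\beta)\mapsto(0,\iota_\Box B')$ for a two-form $B'$ assembled from $B$, $\Delta$ and $H$; this already explains the shape of $\mathfrak{i}_H$. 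Rather than making the differentiation rigorous in the infinite-dimensional setting (and chasing the sign conventions that fix $B'=B-\Lie_\Delta\iota_\mathbbm{1}H$), I would verify the three relevant statements directly: that $\mathfrak{i}_H$ lands in $\mathfrak{aut}_{CJ}^H(L)$, that it is a homomorphism of Lie algebras, and that it is bijective.

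For well-definedness I would split the output derivation as $D=D_\Delta+D_{B'}$, where $D_\Delta(\Box,\beta)=([\Delta,\Box],\Lie_\Delta\beta)$ and $D_{B'}(\Box,\beta)=(0,\iota_\Box B')$ with $B':=B-\Lie_\Delta\iota_\mathbbm{1}H$. Condition $i.)$ is immediate, since both $[\Delta,\pr_D(\varepsilon)]$ and $\pr_D(D(\varepsilon))$ equal $[\Delta,\Box]$. Condition $ii.)$ follows from the Leibniz rule $\Delta(\beta(\Box))=(\Lie_\Delta\beta)(\Box)+\beta([\Delta,\Box])$ for the $D_\Delta$-part, whereas the $B'$-contributions $\iota_{\Box_1}B'(\Box_2)+\iota_{\Box_2}B'(\Box_1)$ cancel because $B'$ is skew.

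The step I expect to be the genuine obstacle is condition $iii.)$, compatibility with the twisted bracket $[\![-,-]\!]_H$. Here I would measure separately how far $D_\Delta$ and $D_{B'}$ are from being derivations of $[\![-,-]\!]_H$. A Cartan-calculus computation in the complex $(\Omega_L^\bullet(M),\D_L)$ shows that the sole defect of $D_\Delta$ is the term $\iota_{\Box_1}\iota_{\Box_2}\Lie_\Delta H$ in the $J^1L$-component, and that the sole defect of $D_{B'}$ is $\iota_{\Box_1}\iota_{\Box_2}\D_L B'$, so that $D$ is a derivation precisely when $\Lie_\Delta H+\D_L B'=0$. Since $H$ is closed and $\Lie_\Delta$ commutes with $\D_L$, and using the identity $\D_L\iota_\mathbbm{1}H=\Lie_\mathbbm{1}H=H$ — valid because the tautological section $\mathbbm{1}$ is central in $DL$ and acts by the identity on $L$, so that $\Lie_\mathbbm{1}=\id$ on $\Omega_L^\bullet(M)$ — I obtain $\D_L B'=-\D_L\Lie_\Delta\iota_\mathbbm{1}H=-\Lie_\Delta H$, which is exactly the required cancellation. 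This is where the specific twisting correction $-\Lie_\Delta\iota_\mathbbm{1}H$ in the definition of $\mathfrak{i}_H$ becomes indispensable.

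Finally I would check that $\mathfrak{i}_H$ is a bijective Lie algebra map. On $Z^2_L(M)\rtimes\Secinfty(DL)$ the bracket is $[(B_1,\Delta_1),(B_2,\Delta_2)]=(\Lie_{\Delta_1}B_2-\Lie_{\Delta_2}B_1,[\Delta_1,\Delta_2])$, and expanding the commutator of the associated derivations through $[D_{\Delta_1},D_{\Delta_2}]=D_{[\Delta_1,\Delta_2]}$, $[D_{B_1'},D_{B_2'}]=0$ and $[D_{\Delta_i},D_{B_j'}]=D_{\Lie_{\Delta_i}B_j'}$ together with antisymmetry, the twisting terms recombine by means of $[\Lie_{\Delta_1},\Lie_{\Delta_2}]=\Lie_{[\Delta_1,\Delta_2]}$ into $\mathfrak{i}_H$ of the bracket. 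Injectivity is clear, for $\mathfrak{i}_H(B,\Delta)=0$ forces $\Delta=0$ and then $\iota_\Box B=0$ for all $\Box$, hence $B=0$. For surjectivity I would start from $(D,\Delta)\in\mathfrak{aut}_{CJ}^H(L)$, use conditions $i.)$ and $ii.)$ to see that $D-D_\Delta$ has vanishing symbol, maps into the $J^1L$-summand and is skew for the pairing, hence equals $D_{B'}$ for a unique $B'\in\Omega_L^2(M)$, then read off from condition $iii.)$ that $\D_L B'=-\Lie_\Delta H$, so that $B:=B'+\Lie_\Delta\iota_\mathbbm{1}H$ is closed and satisfies $\mathfrak{i}_H(B,\Delta)=(D,\Delta)$.
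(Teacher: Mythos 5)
The paper does not actually prove this lemma: the preamble to this subsection declares that "the following definitions and Lemmas are obvious adaptions of the case of $H$-twisted Dirac structures" and omits all proofs, so your argument is not competing with anything in the text -- it is supplying what the paper leaves out. Your proof is correct. The decomposition $D=D_\Delta+D_{B'}$ with $B'=B-\Lie_\Delta\iota_{\mathbbm 1}H$, the defect computation showing that condition $iii.)$ reduces to $\Lie_\Delta H+\D_LB'=0$, and the surjectivity argument (conditions $i.)$ and $ii.)$ force $D-D_\Delta$ to be a $\Cinfty$-linear map of the form $\Box\mapsto\iota_\Box B'$ with $S\colon J^1L\to J^1L$ necessarily zero by nondegeneracy of the pairing) all check out, and your semidirect-product bracket is the one compatible with Lemma \ref{Lem: IsoCJ-Aut}. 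You have also correctly isolated the one point where the Jacobi setting genuinely differs from the ordinary Courant-algebroid case, namely the identity $\Lie_{\mathbbm 1}=\id$ on $\Omega^\bullet_L(M)$ (equivalently, $\iota_{\mathbbm 1}$ is a contracting homotopy for $\D_L$), which for closed $H$ gives $\D_L\iota_{\mathbbm 1}H=H$ and hence $\D_LB'=-\Lie_\Delta H$; this is exactly why the correction term $-\Lie_\Delta\iota_{\mathbbm 1}H$ appears in $\mathfrak i_H$ where in the Dirac case one would need to choose a primitive of $\Lie_X H$ by hand. As a consistency check, your formula reproduces the paper's subsequent claim that $[\![(\Delta,\alpha),-]\!]_H=\mathfrak i_H(\D_L(\iota_\Delta\iota_{\mathbbm 1}H-\alpha),\Delta)$, since $\D_L\iota_\Delta\iota_{\mathbbm 1}H-\Lie_\Delta\iota_{\mathbbm 1}H=-\iota_\Delta H$ for closed $H$. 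The only cosmetic caveat is that in the surjectivity step you should note explicitly that condition $ii.)$ forces $D$ and $D_\Delta$ to have the same symbol, so that their difference is indeed tensorial; this follows from applying $ii.)$ to $f\varepsilon$ and is worth a sentence.
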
 

For every section $(\Delta,\alpha)\in \Secinfty(\mathbb{D}L)$ the map $[\![(\Delta,\alpha),-]\!]_H$ 
is an infinitesimal (H-twisted) 
Courant-Jacobi automorphism, in fact it is realized in $Z^2_L(M)\rtimes \Secinfty(DL)$ by 
	\begin{align*}
	\mathfrak{i}_H(\D_L(\iota_\Delta\iota_\mathbbm{1}H-\alpha),\Delta)=[\![(\Delta,\alpha),-]\!]_H
	\end{align*}
For later use, we want to talk about the flow of infintesimal (H-twisted ) Courant-Jacobi automorphisms and want to compute 
them as explicit as possible. 
\begin{lemma}
Let $L\to M$ be line bundle and let $H\in \Omega^3_L(M)$ be closed. Let additionally $(\alpha,\Delta)\in
Z^2_L(M)\rtimes \Secinfty(DL)$. The flow of $\mathfrak{i}_H(B,\Delta)$ is given by
	\begin{align*}
	\mathcal{I}_H(\gamma_t,\Phi^\Delta_t)&=
	\mathcal{I}_H
	\bigg(-\int_0^t (\Phi_{-\tau}^\Delta)^*B\D \tau, \Phi_t^\Delta\bigg)\\&
	=\exp\big(-\int_0^t(\Phi_{-\tau}^\Delta)^*(B)\D\tau+\iota_\mathbbm{1}(H-(\Phi_t^\Delta)_*H)\big)
	\circ \mathbb{D}\Phi_t^\Delta.
	\end{align*}
\end{lemma}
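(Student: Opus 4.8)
The plan is to reduce everything to the two isomorphisms of Lemma \ref{Lem: IsoCJ-Aut} and its infinitesimal counterpart. Since $\mathcal{I}_H\colon Z^2_L(M)\rtimes\Aut(L)\to\Aut_{CJ}^H(L)$ is an isomorphism of groups whose differential at the identity is precisely $\mathfrak{i}_H$, integrating the infinitesimal automorphism $\mathfrak{i}_H(B,\Delta)$ is the same as integrating $(B,\Delta)$ inside the source group $Z^2_L(M)\rtimes\Aut(L)$ and then transporting the resulting one-parameter subgroup by $\mathcal{I}_H$. This reduces the whole computation to two tasks: identifying the semidirect-product structure, and solving a linear ODE in the abelian factor $Z^2_L(M)$.

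First I would determine the group law that makes $\mathcal{I}_H$ a homomorphism. Conjugating a $B$-field transform by $\mathbb{D}\Phi$ yields the $B$-field transform by the pushforward, $\mathbb{D}\Phi\circ\exp(B)=\exp(\Phi_*B)\circ\mathbb{D}\Phi$, and since the tautological section $\mathbbm{1}\in\Secinfty(DL)$ is $\Aut(L)$-invariant, the twisting terms $\iota_\mathbbm{1}(H-\Phi_*H)$ telescope upon composing two factors. Feeding this into the explicit formula for $\mathcal{I}_H$ shows that $\Aut(L)$ acts on the closed forms by pushforward, so that the product reads $(B_1,\Phi_1)(B_2,\Phi_2)=(B_1+(\Phi_1)_*B_2,\Phi_1\Phi_2)$.

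Next I would solve for the one-parameter subgroup $(\gamma_t,\Phi_t)$ with velocity $(B,\Delta)$ at $t=0$. The $\Aut(L)$-component is forced to be the flow $\Phi_t^\Delta$ of $\Delta$. Imposing the subgroup relation $\gamma_{s+t}=\gamma_s+(\Phi_s^\Delta)_*\gamma_t$ and differentiating in the second variable at $t=0$ yields a linear ODE $\dot{\gamma}_t=\pm(\Phi_t^\Delta)_*B$ on the vector space $Z^2_L(M)$, which integrates immediately (using $(\Phi_t^\Delta)_*=(\Phi_{-t}^\Delta)^*$) to $\gamma_t=-\int_0^t(\Phi_{-\tau}^\Delta)^*B\,\D\tau$; this is the first claimed equality. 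Substituting $\gamma_t$ and $\Phi_t^\Delta$ into the formula of Lemma \ref{Lem: IsoCJ-Aut} then gives the second, closed-form equality at once.

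The main obstacle I expect is the sign and twisting bookkeeping. One has to verify carefully that the differential of $\mathcal{I}_H$ at the identity really reproduces $\mathfrak{i}_H$, including the $H$-dependent term, where the relation $[\Delta,\mathbbm{1}]=0$ is needed to move $\Lie_\Delta$ past $\iota_\mathbbm{1}$, and one has to check that the orientation of the flow is what fixes the overall minus sign in $\gamma_t$. Once the conventions for pushforward versus pullback and for the direction of the flow are pinned down consistently, the remaining manipulations are routine; a secondary technical point is that $\Phi_t^\Delta$ is in general only locally defined, but all identities are understood on its domain of definition, where differentiation under the integral sign is justified.
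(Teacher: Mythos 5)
The paper states this lemma without proof, so there is no argument of the author's to compare yours against; judged on its own terms, your strategy is correct and is the natural one. Your computation of the semidirect product law is right: from $\mathbb{D}\Phi\circ\exp(B)=\exp(\Phi_*B)\circ\mathbb{D}\Phi$ and the $\Aut(L)$-invariance of $\mathbbm{1}$ one gets $(\Phi_1)_*\iota_{\mathbbm{1}}(H-(\Phi_2)_*H)=\iota_{\mathbbm{1}}((\Phi_1)_*H-(\Phi_1\Phi_2)_*H)$, the twisting terms telescope, and the product is indeed $(B_1,\Phi_1)(B_2,\Phi_2)=(B_1+(\Phi_1)_*B_2,\Phi_1\Phi_2)$; the one-parameter subgroup condition $\gamma_{s+t}=\gamma_s+(\Phi_s^\Delta)_*\gamma_t$ then forces $\gamma_t=\dot\gamma_0\cdot$(integrated along the flow) as you say. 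The one step you defer that actually carries the content of the sign is the verification that $\D\mathcal{I}_H$ at the identity equals $\mathfrak{i}_H$: writing $F_t=\exp(\gamma_t+\iota_{\mathbbm{1}}(H-(\Phi_t^\Delta)_*H))\circ\mathbb{D}\Phi_t^\Delta$ and using the pullback convention $\frac{\D}{\D t}\big|_0 F_t^*(\Box,\beta)=\mathfrak{i}_H(B,\Delta)(\Box,\beta)$, one finds $F_t^*(\Box,\beta)=((\Phi_t^\Delta)^*\Box,(\Phi_t^\Delta)^*(\beta-\iota_\Box B_t))$ with $B_t=\gamma_t+\iota_{\mathbbm{1}}(H-(\Phi_t^\Delta)_*H)$, whose derivative at $t=0$ is $([\Delta,\Box],\Lie_\Delta\beta-\iota_\Box(\dot\gamma_0+\Lie_\Delta\iota_{\mathbbm{1}}H))$; matching this against $\iota_\Box(B-\Lie_\Delta\iota_{\mathbbm{1}}H)$ pins down $\dot\gamma_0=-B$, hence the minus sign in $\gamma_t=-\int_0^t(\Phi_{-\tau}^\Delta)^*B\,\D\tau$ unambiguously. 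With that computation written out, your argument is complete; it is also consistent with the corollary the paper draws from the lemma, which is a useful sanity check on the conventions.
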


\begin{corollary}
Let $L\to M$ be line bundle and let $H\in \Omega^3_L(M)$ be closed. For every $(\Delta,\alpha)\in \Secinfty(\mathbb{D}L)$ the 
flow of $[\![(\Delta,\alpha),-]\!]_H$ is given by 
	\begin{align*}
	\exp\big(\int_0^t(\Phi_{-\tau}^\Delta)^*(\D_L\alpha +\iota_{\Delta}H)\D\tau\big)
	\circ \mathbb{D}\Phi_t^\Delta.
	\end{align*}
\end{corollary}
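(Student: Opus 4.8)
The plan is to recognise the Corollary as a special case of the preceding flow Lemma. Immediately before the statement it is recorded that
\begin{align*}
[\![(\Delta,\alpha),-]\!]_H=\mathfrak{i}_H\big(\D_L(\iota_\Delta\iota_\mathbbm{1}H-\alpha),\Delta\big),
\end{align*}
so writing $B:=\D_L(\iota_\Delta\iota_\mathbbm{1}H-\alpha)$ the flow Lemma gives the flow of $[\![(\Delta,\alpha),-]\!]_H$ as
\begin{align*}
\exp\Big(-\int_0^t(\Phi_{-\tau}^\Delta)^*B\,\D\tau+\iota_\mathbbm{1}\big(H-(\Phi_t^\Delta)_*H\big)\Big)\circ\mathbb{D}\Phi_t^\Delta .
\end{align*}
Everything then reduces to identifying this exponent with $\int_0^t(\Phi_{-\tau}^\Delta)^*(\D_L\alpha+\iota_\Delta H)\,\D\tau$.

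First I would dispose of the $\alpha$-contribution. Because $\Phi^\Delta_t\in\Aut(L)$, its induced pullback on $\Omega_L^\bullet(M)$ commutes with $\D_L$, so $-(\Phi_{-\tau}^\Delta)^*B=\D_L(\Phi_{-\tau}^\Delta)^*\alpha-\D_L(\Phi_{-\tau}^\Delta)^*\iota_\Delta\iota_\mathbbm{1}H$. This produces the wanted summand $\int_0^t(\Phi_{-\tau}^\Delta)^*\D_L\alpha\,\D\tau$ and leaves the task of showing that $\iota_\mathbbm{1}(H-(\Phi_t^\Delta)_*H)-\int_0^t(\Phi_{-\tau}^\Delta)^*\D_L\iota_\Delta\iota_\mathbbm{1}H\,\D\tau$ equals $\int_0^t(\Phi_{-\tau}^\Delta)^*\iota_\Delta H\,\D\tau$.

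For the boundary term I would use that the canonical section $\mathbbm{1}\in\Secinfty(DL)$, being the identity endomorphism of $L$, is preserved by every $D\Phi$; hence $\iota_\mathbbm{1}$ commutes with pullbacks and $(\Phi_t^\Delta)_*=(\Phi_{-t}^\Delta)^*$. Differentiating $\tau\mapsto(\Phi_{-\tau}^\Delta)^*\iota_\mathbbm{1}H$ and integrating back then yields
\begin{align*}
\iota_\mathbbm{1}\big(H-(\Phi_t^\Delta)_*H\big)=\int_0^t(\Phi_{-\tau}^\Delta)^*\Lie_\Delta\iota_\mathbbm{1}H\,\D\tau .
\end{align*}
The heart of the matter is the pointwise identity $\Lie_\Delta\iota_\mathbbm{1}H=\iota_\Delta H+\D_L\iota_\Delta\iota_\mathbbm{1}H$: Cartan's formula $\Lie_\Delta=\iota_\Delta\D_L+\D_L\iota_\Delta$ already accounts for the second term, and the first reduces to $\iota_\Delta\D_L\iota_\mathbbm{1}H=\iota_\Delta H$, which follows from the identity $\D_L\iota_\mathbbm{1}H=H$. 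Substituting this back makes the two $\D_L\iota_\Delta\iota_\mathbbm{1}H$-integrals telescope away, leaving exactly $\int_0^t(\Phi_{-\tau}^\Delta)^*\iota_\Delta H\,\D\tau$ and hence the claimed exponent.

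The step I expect to be the real obstacle — and the mechanism that makes the $H$-corrections collapse — is the relation $\D_L\iota_\mathbbm{1}H=H$ for closed $H$. I would derive it from the basic fact that $\Lie_\mathbbm{1}=\id$ on the whole complex $\Omega_L^\bullet(M)$ (a consequence of $\mathbbm{1}$ being the identity endomorphism, with vanishing anchor), since then $H=\Lie_\mathbbm{1}H=\iota_\mathbbm{1}\D_L H+\D_L\iota_\mathbbm{1}H=\D_L\iota_\mathbbm{1}H$ using $\D_L H=0$. This, together with careful bookkeeping of the signs coming from differentiating pullbacks and from the anticommutation $\iota_\mathbbm{1}\iota_\Delta=-\iota_\Delta\iota_\mathbbm{1}$, is most safely verified in a local trivialisation $L=\mathbb{R}_M$, where $\D_L=\D_{dR}+\mathbbm{1}^*\wedge$ and $\mathbbm{1}$ is the flat vertical section, so that $\Lie_\mathbbm{1}=\id$ can be read off directly.
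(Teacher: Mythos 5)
Your proposal is correct and follows exactly the route the paper intends: the corollary is the preceding flow Lemma applied to $\mathfrak{i}_H(\D_L(\iota_\Delta\iota_\mathbbm{1}H-\alpha),\Delta)=[\![(\Delta,\alpha),-]\!]_H$, and your simplification of the exponent via $\Lie_\mathbbm{1}=\id$, hence $\D_L\iota_\mathbbm{1}H=H$ for closed $H$, is the computation the paper leaves implicit. The cancellation of the $\D_L\iota_\Delta\iota_\mathbbm{1}H$ terms checks out, so nothing is missing.
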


\subsection{Dirac-Jacobi bundles}
After having discussed the arena, we want to introduce the subbundles of interest: so-called \emph{Dirac-Jacobi Bundles}. As the name suggest, they are the analogue of Dirac structures on the generalized tangent bundle. In fact, the definition is 
(up to some obvious replacements) the same.

\begin{definition}
Let $L\to M$ be a line bundle and $H\in\Omega_L^3(M)$ . A subbundle $\mathcal{L}\subseteq \mathbb{D}L$ is called a ($H$-twisted) Dirac-Jacobi structure, if 
	\begin{enumerate}
	\item $\mathcal L$ is involutive with respect to $[\![- ,-]\!]_H$,
	\item $\mathcal L$ is maximally isotropic with respect to $\bla - ,-\bra$ .
	\end{enumerate}
Moreover, if $H=0$, we will call $\mathcal{L}$ simply Dirac-Jacobi structure.
\end{definition}

\begin{example}\label{Ex: Jacobi}
Let $L\to M$ be a line bundle and let $J\in \Secinfty(\Anti^2 (J^1L)^*\tensor L)$ be a Jacobi structure, then
	\begin{align*}
	\mathcal{L}_J:=\{(J^\sharp(\psi),\psi)\in \mathbb{D}L\ | \ \psi\in J^1 L\}
	\end{align*}
is a Dirac-Jacobi structure.
\end{example}

\begin{proposition}\label{Prop: DJtoJ}
Let $L\to M $ be a line bundle and let $\mathcal{L}\subseteq \mathbb{D}L$ be a Dirac-Jacobi bundle, such that 
	\begin{align*}
	DL\cap \mathcal{L}=\{0\}.
	\end{align*}
Then there is a unique Jacobi structure $J\in \Secinfty(\Anti^2 (J^1L)^*\tensor L)$, such that $\mathcal{L}_J=\mathcal{L}$
\end{proposition}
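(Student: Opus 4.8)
The plan is to exhibit $\mathcal{L}$ as the graph of a bundle map $J^1L\to DL$, to use the maximal isotropy to turn that map into the sharp of a skew tensor, and finally to read off the Jacobi identity from involutivity. Write $n=\dim M$, so that $DL$ and $J^1L$ both have rank $n+1$ and $\mathbb{D}L$ has rank $2(n+1)$; being maximally isotropic for the split pairing $\bla-,-\bra$, the subbundle $\mathcal{L}$ has rank $n+1$. Let $q\colon\mathbb{D}L\to J^1L$ be the projection along $DL$, so that $\ker q=DL$ and hence $\ker(q|_{\mathcal{L}})=\mathcal{L}\cap DL=\{0\}$ by hypothesis. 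A fibrewise injective morphism between bundles of equal rank $n+1$ is an isomorphism, so $q|_{\mathcal{L}}$ is invertible and $\mathcal{L}$ is the graph of
\[
A:=\pr_D\circ(q|_{\mathcal{L}})^{-1}\colon J^1L\to DL,\qquad \mathcal{L}=\{(A\psi,\psi)\mid\psi\in J^1L\}.
\]

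Next I would produce the tensor. Define $J(\phi,\psi):=\phi(A\psi)$ using the tautological pairing $J^1L\times DL\to L$; this is $C^\infty(M)$-bilinear because $A$ is a bundle map, so a priori $J\in\Secinfty((J^1L)^*\tensor(J^1L)^*\tensor L)$. The maximal isotropy of $\mathcal{L}$ reads $\bla(A\phi,\phi),(A\psi,\psi)\bra=\phi(A\psi)+\psi(A\phi)=0$, i.e. $J(\phi,\psi)=-J(\psi,\phi)$, so in fact $J\in\Secinfty(\Anti^2(J^1L)^*\tensor L)$. By construction $\phi(A\psi)=J(\phi,\psi)=\phi(J^\sharp\psi)$ for every $\phi$, whence $A=J^\sharp$ and $\mathcal{L}=\mathcal{L}_J$ as subbundles of $\mathbb{D}L$.

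It remains to show that $J$ is a Jacobi structure, i.e. that $[J,J]=0$ for the Gerstenhaber-Jacobi bracket recalled above; this is the heart of the proof. Since $\mathcal{L}=\mathcal{L}_J$ is the graph of $J^\sharp$, involutivity under $[\![-,-]\!]$ (with $H=0$) is equivalent to asking that for all $\psi_1,\psi_2\in\Secinfty(J^1L)$ the $DL$-component of the Dorfman bracket be $J^\sharp$ of its $J^1L$-component, that is
\[
[J^\sharp\psi_1,J^\sharp\psi_2]=J^\sharp\big(\Lie_{J^\sharp\psi_1}\psi_2-\iota_{J^\sharp\psi_2}\D_L\psi_1\big).
\]
I would show that the difference of the two sides is $C^\infty(M)$-bilinear in $(\psi_1,\psi_2)$ and that, paired with an arbitrary $\psi_3\in J^1L$, it equals $[J,J](\psi_1,\psi_2,\psi_3)$ up to a universal sign and factor. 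Because the first jets $\D_L\lambda$ span $J^1L$ fibrewise, it suffices to verify this on exact jets $\psi_i=\D_L\lambda_i$, where $\Lie_{J^\sharp\psi_1}$ and $\iota_{J^\sharp\psi_2}\D_L$ simplify and the Gerstenhaber-Jacobi calculus of \cite{2017arXiv170508962T} applies. Granting this identity, involutivity forces $[J,J]=0$, which is precisely the Jacobi identity for $\{\lambda,\mu\}=J(\D_L\lambda,\D_L\mu)$.

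Uniqueness is then immediate: two Jacobi structures $J,J'$ with $\mathcal{L}_J=\mathcal{L}_{J'}$ have the same graph, hence $J^\sharp=(J')^\sharp$, and since $J(\phi,\psi)=\phi(J^\sharp\psi)$ recovers $J$ from $J^\sharp$ we conclude $J=J'$. The main obstacle is the tensorial identity of the third step, identifying the involutivity defect of the graph of $J^\sharp$ with the Gerstenhaber-Jacobi bracket $[J,J]$; the rest is fibrewise linear algebra.
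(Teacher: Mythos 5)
Your argument is correct and is exactly the standard graph construction that the paper itself invokes by saying the result ``follows the same lines as the well-known fact in Poisson geometry'': transversality plus equal ranks makes $\mathcal{L}$ a graph, isotropy gives skew-symmetry, involutivity gives $[J,J]=0$. The only step you leave schematic (identifying the involutivity defect of the graph of $J^\sharp$ with $[J,J]$) is precisely the part the paper also leaves to the literature, so your write-up is a faithful expansion of the intended proof.
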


\begin{proof}
The result follows the same lines as the well-known fact in Poisson geometry.
\end{proof}
Another interesting example of Dirac-Jacobi bundles, which also plops up in Jacobi geometry, is 
\begin{definition}
Let $L\to M$ be a line bundle. A Dirac-Jacobi structure $\mathcal{L}\subseteq \mathbb{D}L$ is called 
of homogeneous Poisson type, if
	\begin{align*}
	\rank(\mathcal{L}\cap DL)=1.
	\end{align*}	 
\end{definition} 
The name of these objects is justified by the following 

\begin{lemma}\label{Lem: HomPoi}
Let $L\to M$ be a line bundle and let $\mathcal{L}\subseteq\mathbb{D}L$ a Dirac-Jacobi structure of homogeneous Poisson type, then for every point $p\in M$ there exists a local trivialization $L_U=U\times \mathbb{R}$, a flat connection 
$\nabla\colon TU\to DL_U\cong TU\oplus\mathbb{R}_U$ and a homogeneous Poisson structure $\pi\in\Secinfty(\Anti^2TU)$ with homogeneity $Z\in\Secinfty(TM)$, such that 
	\begin{align*}
	\mathcal{L}\at{U}=
	\{(r(\mathbbm{1}-\nabla_Z)+\nabla_{\pi^\sharp(\alpha)},\alpha +\alpha(Z)\mathbbm{1}^*)
	\in \mathbbm{D}L\at{U} \ | \ h\in \mathbb{R}, \  \alpha\in T^*M\},
	\end{align*}
where we use the inclusion $T^*M\to J^1L$ by $\alpha(\nabla_X)=\alpha(X)$ and $\alpha(\mathbbm{1})=0$.
\end{lemma}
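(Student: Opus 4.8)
The plan is to read the trivialization, the flat connection and the bivector directly off $\mathcal{L}$, using the rank-one line $\mathcal{L}\cap DL$ to produce the distinguished derivation that will become $\mathbbm{1}-\nabla_Z$, and then to extract the Poisson and homogeneity conditions from involutivity. For simplicity I would first assume $H=0$, the general case being reduced to this by a local gauge transformation.

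First I would fix a nowhere-vanishing local section $\Delta_0\in\Secinfty(\mathcal{L}\cap DL)$, so that $(\Delta_0,0)\in\mathcal{L}$ generates this rank-one line, and set $Z:=-\sigma(\Delta_0)$. The point is to find a trivialization in which $\Delta_0=\mathbbm{1}-\nabla_Z$, i.e. a nowhere-vanishing section $s$ of $L$ with $\Delta_0 s=s$. Since $L$ is a line bundle, any nowhere-vanishing $s$ satisfies $\Delta_0 s=g\,s$ for a function $g=\Delta_0 s/s$, and replacing $s$ by $\phi s$ changes $g$ by $\sigma(\Delta_0)(\log\phi)$. Hence one can arrange $g(p)\neq 0$: where $\sigma(\Delta_0)(p)\neq 0$ this is achieved by prescribing the first jet of $\phi$, and where $\sigma(\Delta_0)(p)=0$ the value $\Delta_0(p)\in\mathrm{End}(L_p)=\mathbb{R}\,\mathbbm{1}$ is already a nonzero scalar since $\Delta_0$ is nowhere zero. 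Then $g$ is nonvanishing near $p$, and rescaling $\Delta_0$ within its line by $g^{-1}$ gives $\Delta_0 s=s$. Declaring $s$ flat defines a flat connection $\nabla$ with $DL_U\cong TU\oplus\mathbb{R}_U$ as in Remark~\ref{Rem: TrivLine}, and by construction $\Delta_0=\mathbbm{1}-\nabla_Z$.

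Next I would pin down the $J^1L$-part of an arbitrary element of $\mathcal{L}$ purely from isotropy. For $(\Delta,\psi)\in\mathcal{L}$, writing $\psi=\alpha+r\mathbbm{1}^*$, the condition $\bla(\Delta_0,0),(\Delta,\psi)\bra=\psi(\Delta_0)=0$ reads $r-\alpha(Z)=0$, so every element of $\mathcal{L}$ has $J^1L$-part $\alpha+\alpha(Z)\mathbbm{1}^*$. The assignment $(\Delta,\alpha+\alpha(Z)\mathbbm{1}^*)\mapsto\alpha$ is then a bundle map $\mathcal{L}\to T^*U$ whose kernel is exactly $\mathcal{L}\cap DL=\langle\Delta_0\rangle$; since $\rank\mathcal{L}=\dim M+1$ it is onto. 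Because the ambiguity $\Delta_0=\mathbbm{1}-\nabla_Z$ has vertical component $1$, each $\alpha$ has a unique representative $(\nabla_{X(\alpha)},\alpha+\alpha(Z)\mathbbm{1}^*)\in\mathcal{L}$ with vanishing vertical part, and I set $\pi^\sharp(\alpha):=X(\alpha)$. Feeding two such representatives into $\bla-,-\bra$ and using isotropy yields $\alpha(\pi^\sharp\beta)+\beta(\pi^\sharp\alpha)=0$, so $\pi^\sharp$ is skew and defines $\pi\in\Secinfty(\Anti^2TU)$; adding the free multiple $r\Delta_0$ to these representatives reproduces exactly the claimed description of $\mathcal{L}|_U$.

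It then remains to identify the two tensorial conditions on $(\pi,Z)$, and this is where all the analytic content lies. I would feed the generators $e_\alpha:=(\nabla_{\pi^\sharp\alpha},\alpha+\alpha(Z)\mathbbm{1}^*)$ and $\Delta_0=(\mathbbm{1}-\nabla_Z,0)$ into the untwisted Dorfman bracket and impose involutivity: expanding $[\![e_{\D_L f},e_{\D_L g}]\!]\in\mathcal{L}$ and matching the $DL$- and $J^1L$-components should collapse, after the Leibniz terms cancel, to the Jacobi identity $[\pi,\pi]=0$, while $[\![\Delta_0,e_{\D_L f}]\!]\in\mathcal{L}$ should yield the homogeneity relation $\Lie_Z\pi=-\pi$, with the sign fixed by $\sigma(\Delta_0)=-Z$. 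I expect this last step to be the main obstacle: the Dorfman bracket is only $\mathbb{R}$-bilinear and carries several $\nabla$- and $\D_L$-dependent terms, so the bookkeeping is lengthy and one must verify that matching every component produces precisely these two conditions and nothing stronger.
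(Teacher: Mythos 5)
Your proposal is correct and follows essentially the same route as the paper's proof: normalize the generator of $\mathcal{L}\cap DL$ to the form $\mathbbm{1}-\nabla_Z$ (the paper does this by the same dichotomy on whether the vertical part vanishes at $p$, shifting the flat connection by a closed one-form where you instead rescale the trivializing section --- locally the same operation), then read the $J^1L$-component off isotropy, obtain $\pi$ from the quotient by $\langle\Delta_0\rangle$, and extract $[\pi,\pi]=0$ and $\Lie_Z\pi=-\pi$ from involutivity. The final involutivity computation that you flag as the remaining work is likewise left unperformed in the paper, which simply asserts that the claim follows from flatness and involutivity.
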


\begin{proof}
Let $p\in M$ and $U\subseteq M$ be an open subset containing $p$, such that $L_U\cong U\times \mathbb{R}$ with corresponding 
trivialization of the gauge algebroid $DL_U=TU\oplus \mathbb{R}_U$, and hence we are using the canonical flat connection 
$\nabla^{\mathrm{can}}\colon TU\to TU\oplus \mathbb{R}_U$. In a possibly 
smaller neighbourhood, notated also by $U$, we find a non-vanishing section $\Delta=(-X,f)\in \Secinfty(\mathcal{L}\cap DL)$.
We can distinguish two cases: the first is that $f(p)\neq 0$, the we find a (possibly smaller) neighbourhood of $p$, such 
that $f$ is non-vanishing, hence $(-\frac{X}{f},1)=:(-Z,1)$ spans  $\mathcal{L}\cap DL$ in that neighbourhood. Exploting the 
isotropy, we see that $\mathcal{L}\at{U}$ is of the form 
	\begin{align*}
	\{(h\mathbbm{1}+ \nabla^\mathrm{can}_Y,\alpha+\alpha(Z)\mathbbm{1}^*)\in \mathbb{D}L_U\ | \ 
	h\in \mathbb{R}, \ \alpha\in T^*U\}
	\end{align*}
and not further specified $Y\in TU$, since the $J^1L_U$ part has to vanish at sections of the form 
$r(\mathbbm{1}-\nabla^\mathrm{can}_Z)$. We can write this as 
	\begin{align*}
	\{(h(\mathbbm{1}-\nabla^\mathrm{can}_Z)+ \nabla^\mathrm{can}_{hZ+Y},\alpha+\alpha(Z)\mathbbm{1}^*)\in \mathbb{D}L_U\ | \ 
	h\in \mathbb{R}, \ \alpha\in T^*U\}.
	\end{align*}
Note that, because of the isotropy, $hZ+Y$ is completely determined by $\alpha$, hence there is a bi-vector $\pi\in \Secinfty(\Anti^2 TU)$ such that $\pi^\sharp(\alpha)=hZ+Y$ and we can write
	\begin{align*}
	\mathcal{L}\at{U}=
	\{(h(\mathbbm{1}-\nabla^\mathrm{can}_Z)+ \nabla^\mathrm{can}_{\pi^\sharp(\alpha)},
	\alpha+\alpha(Z)\mathbbm{1}^*)\in \mathbb{D}L_U\ | \ 
	h\in \mathbb{R}, \ \alpha\in T^*U\}.
	\end{align*}
The claim follows by using the flatness of $\nabla^\mathrm{can}$ and the involutivity of $\mathcal{L}$.

Now we have to treat the case $f(p)=0$. Since $\Delta=(-X,f)$ is non-vanishing, we conclude that $X(p)\neq 0$, hence there is a closed two form $\beta\in \Secinfty(T^* U)$ such that $\beta(X)=-1$ around $p$. We define the flat connection 
	\begin{align*}
	\nabla\colon TU\ni Y\mapsto \nabla^\mathrm{can}_Y +\beta(Y)\mathbbm{1} \in DL_U.
	\end{align*}
With this connection we see that $\Delta=(f-1)\mathbbm{1}-\nabla_X$ and since $f(p)=0$, we have that $f-1\neq 0$ in a whole neighbourhood of $p$ and hence we choose $\Delta'=\frac{1}{f-1}\Delta$ as a generating section of $\mathcal{L}\cap DL$ around 
$p$. We can now repeat the same argument as for the case $f(p)\neq 0$ by using the connection $\nabla$ instead of $\nabla^\mathrm{can}$, since $\Delta'= \mathbbm{1}-\nabla_Z$ for $Z=\frac{1}{f-1} X$.
\end{proof}

In the category of Dirac-Jacobi bundles there are not just automorphism of the omni-Lie algebroid as morphisms, one of the possibilities is to include so-called \emph{backwards transformations} as in the Dirac geometry case.  

\begin{definition}
Let $L_i\to M_i$ for $i=1,2$ be two line bundles and let $\Phi\colon L_1\to L_2$ be a regular line bundle morphism
covering $\phi\colon M_1\to M_2$. Let $\mathcal{L}\subseteq\mathbb {D}L_2$ be a Dirac-Jacobi bundle. The bundle 
	\begin{align*}
	\mathfrak{B}_{\Phi}(\mathfrak{L}):= \{(\Delta_p, (D\Phi)^*\alpha_{\phi(p)})\in\mathbb {D}L_1\ | \ 
	(D\Phi(\Delta_p),\alpha_{\phi(p)})\in\mathfrak{L}\}
	\end{align*}	
is called Backwards transformation of $\mathcal{L}$.   
\end{definition}

The backwards transform of a Dirac-Jacobi bundle need not to be Dirac-Jacobi anymore, 
but there are sufficient conditions on the 
subbundle $\mathcal{L}$ and the line bundle morphism $\Phi$ which can be seen, i.e. in \cite{DirJacBun}:

\begin{theorem}\label{Thm: CleanInt}
Let $\Phi\colon L_1\to L_2$ be a regular line bundle morphism over $\phi:M_1\to M_2$ and let
$\mathcal{L}\in \mathbb{D}L_2 $ be a Dirac-Jacobi bundle. If $\ker D\Phi^*\cap \phi^*\mathcal{L}$ has constant rank, then $\mathfrak{B}_\Phi(\mathcal{L})$ is a Dirac-Jacobi bundle. 
\end{theorem}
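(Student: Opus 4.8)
The plan is to verify the three defining properties of a Dirac-Jacobi bundle for $\mathfrak{B}_\Phi(\mathcal{L})$: that it is a smooth subbundle of $\mathbb{D}L_1$, that it is maximally isotropic with respect to $\bla-,-\bra$, and that it is involutive with respect to $[\![-,-]\!]$ (I treat the untwisted case; in the twisted situation one additionally assumes $H_1=\Phi^*H_2$ and the argument is unchanged). The isotropy and maximality are pure linear algebra, and they become transparent after locally trivialising $L_1$ and $L_2$: then $\mathbb{D}L_i$ is identified with $(TM_i\oplus\mathbb{R})\oplus(T^*M_i\oplus\mathbb{R})$ and $\bla-,-\bra$ with the split symmetric pairing, so everything reduces to the familiar statements for the generalized tangent bundle, now applied to $D\Phi$ and $(D\Phi)^*$ in place of $T\phi$ and $T\phi^*$.

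First I would record isotropy and maximality. For $(\Delta,(D\Phi)^*\alpha),(\Delta',(D\Phi)^*\alpha')\in\mathfrak{B}_\Phi(\mathcal{L})$ one computes, using that $\Phi$ is regular,
\begin{align*}
\bla(\Delta,(D\Phi)^*\alpha),(\Delta',(D\Phi)^*\alpha')\bra
=\Phi^{-1}\bla(D\Phi(\Delta),\alpha),(D\Phi(\Delta'),\alpha')\bra=0,
\end{align*}
the last equality because $(D\Phi(\Delta),\alpha),(D\Phi(\Delta'),\alpha')\in\mathcal{L}$ and $\mathcal{L}$ is isotropic; hence $\mathfrak{B}_\Phi(\mathcal{L})$ is isotropic. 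Maximality is the statement that the pointwise backward image of a Lagrangian relation is again Lagrangian: a short rank count using $\mathcal{L}^\perp=\mathcal{L}$ gives
\begin{align*}
\dim\mathfrak{B}_\Phi(\mathcal{L})_p=\dim\ker D\Phi+\rank D\Phi=\rank(DL_1),
\end{align*}
so each fibre has exactly half the rank of $\mathbb{D}L_1$; in particular the fibrewise rank is already constant.

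Next comes smoothness, which is the only place the clean-intersection hypothesis enters. I would exhibit $\mathfrak{B}_\Phi(\mathcal{L})$ as the image of a subbundle under a bundle map. Set $\mathcal{M}:=DL_1\oplus\phi^*J^1L_2$ and consider the two bundle maps over $M_1$,
\begin{align*}
\nu\colon\mathcal{M}\to\phi^*\mathbb{D}L_2,\ (\Delta,\alpha)\mapsto(D\Phi(\Delta),\alpha),\qquad
\mu\colon\mathcal{M}\to\mathbb{D}L_1,\ (\Delta,\alpha)\mapsto(\Delta,(D\Phi)^*\alpha),
\end{align*}
so that $\mathfrak{B}_\Phi(\mathcal{L})=\mu\big(\nu^{-1}(\phi^*\mathcal{L})\big)$. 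The subspace $\ker\mu\cap\nu^{-1}(\phi^*\mathcal{L})$ equals precisely $\ker D\Phi^*\cap\phi^*\mathcal{L}$, which by hypothesis has constant rank $k$. A rank computation then shows that the composite $\mathcal{M}\to\phi^*\mathbb{D}L_2/\phi^*\mathcal{L}$ has constant rank $\rank(J^1L_2)-k$, so $S:=\nu^{-1}(\phi^*\mathcal{L})$ is a smooth subbundle, and that $\mu|_S$ has kernel of constant rank $k$; hence its image $\mathfrak{B}_\Phi(\mathcal{L})=\mu(S)$ is a smooth subbundle, of the expected rank $\rank(DL_1)$.

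Finally, involutivity. The key algebraic input is that the Dorfman bracket is compatible with $\Phi$: if $e_1,e_1'\in\Secinfty(\mathbb{D}L_1)$ are $\Phi$-related to $s,s'\in\Secinfty(\mathcal{L})$, in the sense that their $DL$-parts are $D\Phi$-related and their jet-parts $\psi_1,\psi_1'$ satisfy $\psi_1=(D\Phi)^*\psi$ and $\psi_1'=(D\Phi)^*\psi'$, then $[\![e_1,e_1']\!]$ is $\Phi$-related to $[\![s,s']\!]$. This follows from $D\Phi$ being a Lie algebroid morphism (as $\Phi$ is regular) together with the naturality of $\Lie$ and $\D_L$ under pullback, verified componentwise. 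Granting this, I would argue locally: using smoothness, every point of $M_1$ has a neighbourhood on which $\Secinfty(\mathfrak{B}_\Phi(\mathcal{L}))$ is generated over $C^\infty$ by sections $\Phi$-related to local sections of $\mathcal{L}$. For two such generators the bracket is $\Phi$-related to a bracket of sections of $\mathcal{L}$, which lies in $\mathcal{L}$ by involutivity and hence in $\mathfrak{B}_\Phi(\mathcal{L})$; the Leibniz rule for $[\![-,-]\!]$ reduces an arbitrary pair of sections to this case, the extra anchor-derivative terms being $C^\infty$-multiples of generators and so again sections of $\mathfrak{B}_\Phi(\mathcal{L})$. The hard part will be exactly this last step, namely producing from a smooth local frame of $\mathfrak{B}_\Phi(\mathcal{L})$ enough genuinely $\Phi$-related sections to span each fibre; this is where the constant-rank hypothesis is indispensable, as it is what allows local sections of $\mathcal{L}$ near $\phi(p)$ to be lifted through $\nu$ and $\mu$ to a spanning family of local sections of $\mathfrak{B}_\Phi(\mathcal{L})$.
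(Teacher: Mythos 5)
First, a point of comparison: the paper does not actually prove this statement --- its ``proof'' is a one-line deferral to \cite[Proposition 8.4]{DirJacBun} --- so there is no internal argument to measure yours against. Your outline is the standard one, and its first three parts are essentially complete and correct: the isotropy computation (using that the $L$-valued pullback of jet forms is $\Phi^{-1}\circ\alpha\circ D\Phi$, which is where regularity of $\Phi$ enters), the rank count showing each fibre of $\mathfrak{B}_\Phi(\mathcal{L})$ has dimension $\rank(DL_1)$, and the smoothness argument via $\mu$ and $\nu$, where the identity $\dim\nu^{-1}(\phi^*\mathcal{L})_p=\rank(DL_1)+\dim(\ker(D\Phi)^*\cap\phi^*\mathcal{L})_p$ is exactly what converts the constant-rank hypothesis into smoothness of $S$ and of $\mu(S)$, even though $\ker D\Phi$ and $\image(D\Phi)$ may individually jump in rank.

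The genuine gap is the one you flag yourself: the assertion that $\Secinfty(\mathfrak{B}_\Phi(\mathcal{L}))$ is locally generated by $\Phi$-related pairs of sections. This is not a routine consequence of smoothness, and it is precisely the content of the cited Proposition 8.4. Concretely, given $(\Delta_p,(D\Phi)^*\alpha_{\phi(p)})\in\mathfrak{B}_\Phi(\mathcal{L})_p$, an arbitrary local section $(\Box,\psi)$ of $\mathcal{L}$ through $(D\Phi(\Delta_p),\alpha_{\phi(p)})$ will in general fail to satisfy $\Box_{\phi(q)}\in\image(D_q\Phi)$ for $q$ near $p$, so the lifting problem $D\Phi\circ\Delta=\Box\circ\phi$ has no solution $\Delta\in\Secinfty(DL_1)$ and the pair cannot be completed to a $\Phi$-related one. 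One must first use the constant-rank hypothesis to split $S=\nu^{-1}(\phi^*\mathcal{L})$ as $(\ker(D\Phi)^*\cap\phi^*\mathcal{L})\oplus C$ with $C$ a subbundle whose $\nu$-image is spanned by restrictions along $\phi$ of genuine local sections of $\mathcal{L}$ with projectable $DL_2$-component, and then solve the lifting problem for the $DL_1$-component. (A partial shortcut: since $\mathfrak{B}_\Phi(\mathcal{L})$ is already known to be a maximally isotropic subbundle, involutivity is equivalent to the vanishing of the tensor $(e_1,e_2,e_3)\mapsto\bla [\![e_1,e_2]\!],e_3\bra$, so a pointwise spanning set of $\Phi$-related sections suffices rather than a frame; but producing such a set through every point is still exactly the lifting problem above.) Until that step is supplied, the involutivity claim is unproved; everything else in your proposal is sound.
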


\begin{proof}
The proof can be found in \cite[Proposition 8.4]{DirJacBun}. 
\end{proof}

\begin{remark}
Note that for a line bundle automorphism $\Phi\in\Aut{L}$, we have that 
$\mathbb{D}\Phi(\mathfrak{L})=\mathfrak{B}_{\Phi^{-1}}(\mathfrak{L})$. but not every backwards transform needs to be of this 
form.
\end{remark}

\section{Submanifolds and Euler-like Vector Fields}
In this subsection we want to discuss Euler-like vector fields. These vector fields, in particular, induce a homogeneity structure on the manifold, which is equivalent, under some additional conditions which are in our case always fulfilled, that the manifold is total space of a vector bundle, see e.g. \cite{GR2012}. This total space turns out to be the normal bundle for some submanifold, which is an input datum for an Euler-like vector field. Nevertheless, we will not go more in details with these features, since we work directly with tubular neighbourhoods. 
We will begin collecting facts about tubular neighbourhoods, submanifolds and corresponding mappings and describe afterwards 
the notion of Euler-like vector fields and extend this notion the derivations of a line bundle.

\subsection{Normal Bundles and tubular neighbourhoods}

For pair of manifolds $(M,N)$, i.e. a submanifold $N\hookrightarrow M$, we denote
	\begin{align*}
	\nu(M,N)=\frac{TM\at{N}}{TN}
	\end{align*}
the normal bundle. If the ambient space is clear, we will just write $\nu_N$ instead. Given a map of pairs
	\begin{align*}
	\Phi\colon(M,N)\to (M',N'),
	\end{align*}
i.e. a map $\Phi\colon M\to M$, such that $\Phi(N)\subseteq N'$, we denote by 
	\begin{align*}
	\nu(\Phi)\colon \nu(M,N)\to \nu(M',N')
	\end{align*}
the induced map on the normal bundle. For a vector field $X$ on $M$ tangent to $N$, we have that the flow $\Phi^X_t$ is a map of pairs from $(M,N)$ to itself. Hence we define
	\begin{align*}
	T\nu(X)=\frac{\D}{\D t}\At{t=0} \nu(\Phi^X_t)\in \Secinfty(T\nu_N).
	\end{align*}	  
Moreover, for a vector bundle $E\to M$ and $\sigma \in \Secinfty(E)$, such that $\sigma \at{N}=0$ for a submanifold $N\hookrightarrow M$, we denote by
	\begin{align*}
	\D^N\sigma \colon \nu_N\to E\at{N}
	\end{align*}
the map which is $\nu(\sigma)$, for $\sigma$ seen as a map $\sigma\colon (M,N)\to (E,M)$, followed by the canonical identification $\nu(E,M)=E$, given by 
	\begin{align*}
	C_E\colon E\ni v_p\to [\frac{\D}{\D t}\At{t=0} tv_p]_{TM}\in  \nu(E,M).
	\end{align*}
Before we prove the next results, we want to find a useful description of $C_E^{-1}$. Let us therefore consider a curve $\gamma\colon I\to E$ for an open interval $I$ containing $0$, such that $\gamma(0)=0_p$ for $p\in M$, then one can prove in local coordinates 
	\begin{align}\label{Eq: CanIso}
	C_E^{-1}([\frac{\D}{\D t}\At{t=0}\gamma(t)])=\lim_{t\to 0} \frac{\gamma(t)}{t}.
	\end{align}

\begin{proposition}\label{Prop: Identification}
Let $E_i\to M_i$ vector bundles for $i=1,2$ and let $\Phi\colon E_1\to E_2$ be a vector bundle morphism. Then, for 
$\Phi\colon (E_1,M_1)\to (E_2,M_2)$, 
	\begin{align*}
	C^{-1}_{E_2}\circ\nu(\Phi)\circ C_{E_1}=\Phi
	\end{align*}
\end{proposition}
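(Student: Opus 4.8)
The plan is to chase the definitions, exploiting the scaling curve $t\mapsto tv_p$ that appears in the definition of $C_E$ together with the fiberwise linearity of $\Phi$. First I would observe that every vector bundle morphism sends zero vectors to zero vectors, so $\Phi$ maps the zero section $M_1\hookrightarrow E_1$ into the zero section $M_2\hookrightarrow E_2$; hence $\Phi\colon (E_1,M_1)\to(E_2,M_2)$ is genuinely a map of pairs and $\nu(\Phi)$ is defined. I would then recall that $\nu(\Phi)$ is induced by the tangent map, namely $\nu(\Phi)([X]_{TM_1})=[T\Phi(X)]_{TM_2}$ for $X\in T_{0_p}E_1$, which is well defined precisely because $T\Phi$ carries $TM_1$ into $TM_2$.

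Since $C_{E_2}$ is an isomorphism, the asserted identity is equivalent to $\nu(\Phi)\circ C_{E_1}=C_{E_2}\circ \Phi$, and I would verify this pointwise. Fixing $v_p\in (E_1)_p$, the computation reads
\begin{align*}
\nu(\Phi)\big(C_{E_1}(v_p)\big)
&=\nu(\Phi)\big([\tfrac{\D}{\D t}\At{t=0} t v_p]_{TM_1}\big)
=[T\Phi(\tfrac{\D}{\D t}\At{t=0} t v_p)]_{TM_2}\\
&=[\tfrac{\D}{\D t}\At{t=0}\Phi(t v_p)]_{TM_2}
=[\tfrac{\D}{\D t}\At{t=0} t\,\Phi(v_p)]_{TM_2}
=C_{E_2}\big(\Phi(v_p)\big),
\end{align*}
where the penultimate equality is the only place linearity of $\Phi$ along the fibers enters, giving $\Phi(tv_p)=t\,\Phi(v_p)$. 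As $v_p$ is arbitrary, this proves the claim; equivalently, applying $C_{E_2}^{-1}$ via formula \eqref{Eq: CanIso} recovers $\lim_{t\to 0}\Phi(tv_p)/t=\Phi(v_p)$.

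The argument is essentially a definition chase, so the only subtleties are bookkeeping ones: the scaling curve $t\mapsto tv_p$ must be admissible for $C_E$ (it starts at $0_p\in M\subseteq E$, with velocity transverse to $M$), and pushing $T\Phi$ through $\tfrac{\D}{\D t}$ is just naturality of the velocity under smooth maps. I expect no genuine obstacle. As an alternative one could argue in adapted bundle coordinates, writing $\Phi(x,u)=(\phi(x),A(x)u)$: the maps $C_{E_i}$ are then exactly the fiber coordinates, and $T\Phi$ modulo the base directions acts by the same matrix $A(x)$, so $\nu(\Phi)$ and $\Phi$ coincide after the identification. Either way, the content is simply that $C_E$ is natural with respect to vector bundle morphisms.
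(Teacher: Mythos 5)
Your computation is correct and is essentially the same definition chase as the paper's proof: evaluate at $v_p$, push the velocity of the scaling curve $t\mapsto tv_p$ through $T\Phi$, and use fiberwise linearity to identify the result with $C_{E_2}(\Phi(v_p))$. The extra remarks on well-definedness and the coordinate alternative are fine but not needed beyond what the paper does.
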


\begin{proof}
Let $v_p\in E_1$, then 
	\begin{align*}
	(C^{-1}_{E_2}\circ\nu(\Phi)\circ C_{E_1})(v_p)&
	=(C^{-1}_{E_2}\circ\nu(\Phi))([\frac{\D}{\D t}\At{t=0} tv_p]_{TM_1})\\&
	=C^{-1}_{E_2}([T\Phi\frac{\D}{\D t}\At{t=0} tv_p]_{TM_2})\\&
	=C^{-1}_{E_2}([\frac{\D}{\D t}\At{t=0} t\Phi(v_p)]_{TM_2})\\&
	=\Phi(v_p)
	\end{align*}
\end{proof}

\begin{proposition}\label{Prop: NormalDer}
Let $E_i\to M$ be  vector bundles for $i=1,2$ and let $\Phi\colon E_1\to E_2$ be a vector bundle morphism covering the 
identity. Then, for every section $\sigma\in \Secinfty (E_1)$, such that $\sigma\at{N}=0$ for some submanifold 
$N\hookrightarrow M$, 
	\begin{align*}
	\D^N\Phi(\sigma)=\Phi(\D^N\sigma)
	\end{align*}
holds.
\end{proposition}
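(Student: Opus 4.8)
The plan is to deduce the statement from the functoriality of the normal bundle construction $\nu$ together with Proposition \ref{Prop: Identification}, which already encodes the compatibility of $\nu$ with vector bundle morphisms through the canonical identifications $C_{E_i}$.

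First I would check that the left-hand side is even defined. Since $\Phi$ covers the identity and is fibrewise linear, it sends the zero section to the zero section, so $\Phi(\sigma)\at{N}=\Phi(\sigma\at{N})=0$, and hence $\D^N\Phi(\sigma)$ makes sense. The key observation is then that $\sigma$, viewed as a map of pairs $\sigma\colon(M,N)\to(E_1,M)$, and $\Phi$, viewed as a map of pairs $\Phi\colon(E_1,M)\to(E_2,M)$, compose to the map of pairs $\Phi(\sigma)\colon(M,N)\to(E_2,M)$. By functoriality of $\nu$ this gives $\nu(\Phi(\sigma))=\nu(\Phi)\circ\nu(\sigma)$, where over $N$ the base map of $\nu(\sigma)$ is the inclusion $N\hookrightarrow M$ and that of $\nu(\Phi)$ is $\id_M$, so the domains and restrictions match up. Unwinding the definitions $\D^N\sigma=C_{E_1}^{-1}\circ\nu(\sigma)$ and $\D^N\Phi(\sigma)=C_{E_2}^{-1}\circ\nu(\Phi(\sigma))$ yields
\begin{align*}
\D^N\Phi(\sigma)=C_{E_2}^{-1}\circ\nu(\Phi)\circ\nu(\sigma).
\end{align*}
Now Proposition \ref{Prop: Identification} applied to $\Phi\colon(E_1,M)\to(E_2,M)$ reads $C_{E_2}^{-1}\circ\nu(\Phi)\circ C_{E_1}=\Phi$, i.e.\ $C_{E_2}^{-1}\circ\nu(\Phi)=\Phi\circ C_{E_1}^{-1}$, and substituting this gives $\D^N\Phi(\sigma)=\Phi\circ C_{E_1}^{-1}\circ\nu(\sigma)=\Phi(\D^N\sigma)$.

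Alternatively, I could argue pointwise via the explicit description \eqref{Eq: CanIso}. Representing $n\in\nu_N$ over $p\in N$ by a curve $c$ in $M$ with $c(0)=p$, one has $\D^N\sigma(n)=\lim_{t\to0}\frac{\sigma(c(t))}{t}$, and likewise for $\Phi(\sigma)$. Since $\Phi$ is fibrewise linear it commutes with the scalar $\frac1t$, and since it is smooth (hence continuous) it commutes with the limit, giving $\D^N\Phi(\sigma)(n)=\Phi(\D^N\sigma(n))$ directly.

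The only genuinely delicate point in either route is bookkeeping rather than a real obstacle: in the functorial approach one must verify that $\Phi$ really is a map of the pairs $(E_1,M)\to(E_2,M)$ and that the base maps of the induced normal maps compose correctly over $N$ before invoking Proposition \ref{Prop: Identification}; in the pointwise approach one must justify interchanging $\Phi$ with the limit $t\to0$, which is immediate from continuity once one notes that the vectors $\frac{\sigma(c(t))}{t}$ converge in the total space $E_1$.
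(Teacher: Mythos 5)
Your proposal is correct and follows essentially the same route as the paper: both view $\Phi(\sigma)$ as a composite of maps of pairs, use functoriality of $\nu$ to write $\nu(\Phi(\sigma))=\nu(\Phi)\circ\nu(\sigma)$, and then insert $C_{E_1}\circ C_{E_1}^{-1}$ so that Proposition \ref{Prop: Identification} converts $C_{E_2}^{-1}\circ\nu(\Phi)\circ C_{E_1}$ into $\Phi$. The extra checks you record (that $\Phi(\sigma)\at{N}=0$, and the alternative pointwise limit argument via \eqref{Eq: CanIso}) are correct but not needed beyond what the paper already does.
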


\begin{proof}
We consider the map $\Phi(\sigma)\colon (M,N)\to (E_2,M)$, then we have 
	\begin{align*}
	C_{E_2}^{-1}\circ \nu(\Phi(\sigma))&
	=C_{E_2}^{-1}\circ\nu(\Phi)\circ \nu(\sigma)\\&
	=C_{E_2}^{-1}\circ \nu(\Phi)\circ C_{E_1}\circ C_{E_1}^{-1}\circ \nu(\sigma)\\&
	=\Phi \circ  C_{E_1}^{-1}\circ \nu(\sigma)
	\end{align*}
and the claim follows if we restrict this maps.
\end{proof}

\begin{proposition}\label{Prop: Endomorphism}
Let $(M,N)$ be a pair of manifolds and let $X\in\Secinfty(TM)$, such that $X\at{N}=0$. Then 
	\begin{align*}
	T\Phi_t^X\at{N}=\exp(tD_X)
	\end{align*}	 
for a unique $D_X\in \Secinfty(\End(TM\at{N}))$, moreover $TN \subseteq \ker(D_X)$ and
\begin{center}
		\begin{tikzcd}
 		TM\at{N} \arrow[r, "D_X"]\arrow[d]&   TM\at{N}    \\
 		\nu_N \arrow[ru,swap,  "\D_N X"] 
		\end{tikzcd}
	\end{center}
commutes. 
\end{proposition}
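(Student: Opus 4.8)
The plan is to use that $X\at{N}=0$ turns every point of $N$ into a fixed point of the flow $\Phi_t^X$, so that $T\Phi_t^X$ restricts over $N$ to an honest bundle endomorphism of $TM\at{N}$; the statement then becomes the assertion that a one-parameter group of such endomorphisms has a unique infinitesimal generator, together with an identification of that generator. First I would note that $\Phi_t^X(p)=p$ for every $p\in N$ and every $t$, so that the flow is defined for all $t\in\mathbb{R}$ along $N$ and $T_p\Phi_t^X\colon T_pM\to T_pM$ is a well-defined linear automorphism. The group law $\Phi_{t+s}^X=\Phi_t^X\circ\Phi_s^X$ gives $T\Phi_{t+s}^X\at{N}=(T\Phi_t^X\at{N})\circ(T\Phi_s^X\at{N})$, so $t\mapsto T\Phi_t^X\at{N}$ is a smooth one-parameter group of automorphisms of the vector bundle $TM\at{N}$. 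By the fibrewise matrix-exponential correspondence there is then a unique $D_X\in\Secinfty(\End(TM\at{N}))$, namely $D_X=\frac{\D}{\D t}\At{t=0}T\Phi_t^X\at{N}$, with $T\Phi_t^X\at{N}=\exp(tD_X)$; smoothness of $D_X$ in the base point is inherited from smoothness of the flow.

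For the kernel statement I would use that $\Phi_t^X$ fixes $N$ pointwise, hence $\Phi_t^X\at{N}=\id_N$. Differentiating a curve contained in $N$ then shows that $T_p\Phi_t^X$ restricts to the identity on $T_pN$ for all $t$, i.e. $\exp(tD_X)\at{TN}=\id_{TN}$. Differentiating this at $t=0$ yields $D_X\at{TN}=0$, that is $TN\subseteq\ker D_X$; consequently $D_X$ factors through the projection $TM\at{N}\to\nu_N$, which is exactly what is needed for the triangle to make sense.

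The commutativity of the triangle is the step I expect to cost the most care. Conceptually, for $p\in N$ and $v\in T_pM$ represented by a curve $\gamma$ with $\gamma(0)=p$, $\gamma'(0)=v$, the endomorphism is $D_X(v)=\frac{\D}{\D t}\At{t=0}T_p\Phi_t^X(v)$, and since $T_p\Phi_t^X(v)=\frac{\D}{\D s}\At{s=0}\Phi_t^X(\gamma(s))$ while $\frac{\D}{\D t}\At{t=0}\Phi_t^X(q)=X(q)$, exchanging the two derivatives replaces $D_X(v)$ by the velocity $\frac{\D}{\D s}\At{s=0}X(\gamma(s))$ of the curve $s\mapsto X(\gamma(s))$ in the total space $TM$, based at $X(p)=0_p$. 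By the definition of $\D_N X=C_{TM}^{-1}\circ\nu(X)$ together with the limit formula \eqref{Eq: CanIso}, this velocity is sent under $C_{TM}^{-1}$ precisely to $\lim_{s\to0}\tfrac1s X(\gamma(s))=\D_N X([v])$, where $[v]\in\nu_N$ is the class of $v$; this is the desired identity $D_X=\D_N X\circ(\text{projection})$.

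The genuine obstacle here is bookkeeping with the canonical identifications: $D_X(v)$ is naturally the derivative of a curve inside the fibre $T_pM$, whereas after exchanging $\frac{\D}{\D t}$ and $\frac{\D}{\D s}$ one lands in the tangent space of the total space $TM$ at $0_p$, and the passage between them is exactly what $C_{TM}$ and \eqref{Eq: CanIso} encode. To keep this transparent I would verify it in coordinates $(x,y)$ adapted to $N=\{y=0\}$: writing $X=a^i(x,y)\partial_{x^i}+b^j(x,y)\partial_{y^j}$ with $a^i(x,0)=b^j(x,0)=0$, Clairaut's theorem applied to the components of $\Phi_t^X$ gives that $D_X\at{p}$ is the Jacobian $(\partial_\nu X^\mu)(p)$. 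Its columns along the $\partial_{x^{i'}}$ vanish (reproving $TN\subseteq\ker D_X$), while its columns along the $\partial_{y^{j'}}$ equal $\partial_{y^{j'}}a^i(p)\,\partial_{x^i}+\partial_{y^{j'}}b^j(p)\,\partial_{y^j}$, which is exactly $\D_N X([\partial_{y^{j'}}])$ computed from \eqref{Eq: CanIso}; comparing the two completes the proof.
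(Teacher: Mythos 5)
Your argument is correct and complete. Note that the paper states Proposition \ref{Prop: Endomorphism} without any proof, so there is nothing to compare against; your route --- the points of $N$ are fixed points of the flow, so $t\mapsto T\Phi^X_t\at{N}$ is a smooth one-parameter group of automorphisms of $TM\at{N}$ with a unique generator $D_X$, which is then identified with $\D^N X\circ\mathrm{pr}_{\nu_N}$ by exchanging the $t$- and $s$-derivatives and invoking the limit formula \eqref{Eq: CanIso} --- is the standard expected argument, and your coordinate verification correctly handles the only delicate point, namely the identification of the fibre $T_pM$ with the appropriate quotient of $T_{0_p}(TM)$.
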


\begin{definition}
Let $(M,N)$ be a pair of manifolds. A tubular neighbourhood of $N$ is an open subset $U\subseteq M$ 
containing $N$ together with a 
diffeomorphism 
	\begin{align*}
	\psi\colon \nu_N\to U,
	\end{align*}
such that $\psi\at{N}\colon N\to N$ is the identity and for $\psi\colon (\nu_N,N)\to (M,N)$ the map 
	\begin{align*}
	\nu(\psi)\colon \nu(\nu_N,N)\to \nu_N
	\end{align*}
is inverse of   $C_{\nu_N}\colon \nu_N\to \nu(\nu_N,N)$.
\end{definition}

\subsection{Euler-like Vector fields and Derivations}
In this part, we recall basically just the notion of Euler-like vector fields from \cite{2016arXiv160505386B} and extend 
this notion to derivations of a line bundle. 
 
\begin{definition}
Let $(M,N)$ be a pair of manifolds. A vector field $X\in\Secinfty(TM)$ is called Euler-like, if 
	\begin{enumerate}
	\item $X\at{N}=0$,
	\item $X$ has complete flow,
	\item $T\nu(X)=\mathcal{E}$,
	\end{enumerate}
where $\mathcal{E}$ is the Euler vector field on $\nu_N\to N$. 	
\end{definition}

\begin{proposition}
Let $(M,N)$ be a pair of manifolds, then there exists an Euler-like vector field.
\end{proposition}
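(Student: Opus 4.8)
The plan is to build the vector field by transporting the Euler field of the normal bundle through a tubular neighbourhood and then cutting it off to globalise, checking that the cut-off does not spoil the defining properties. First I would invoke the standard existence of a tubular neighbourhood $\psi\colon \nu_N\to U$ for the pair $(M,N)$, so that $U\subseteq M$ is open, $\psi\at{N}=\id_N$, and $\nu(\psi)=C_{\nu_N}^{-1}$. On the total space of $\nu_N\to N$ there is the canonical Euler (scaling) vector field $\mathcal{E}$, whose flow is the fibrewise multiplication $v\mapsto e^t v$; I would set $X_0:=\psi_*\mathcal{E}\in\Secinfty(TU)$.

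Next I would verify that $X_0$ already satisfies conditions (i) and (iii) of the definition on $U$. Condition (i), $X_0\at{N}=0$, is immediate because $\mathcal{E}$ vanishes along the zero section and $\psi$ fixes $N$. For condition (iii) I would use that $\psi$ is a tubular neighbourhood: the Euler field is homogeneous, so $T\nu(\mathcal{E})=\mathcal{E}$ on $\nu(\nu_N,N)\cong\nu_N$ under $C_{\nu_N}$, and pushing forward by the diffeomorphism $\psi$ together with the relation $\nu(\psi)=C_{\nu_N}^{-1}$ yields $T\nu(X_0)=\mathcal{E}$; here Proposition \ref{Prop: Identification} and the naturality of $\nu(-)$ handle the bookkeeping. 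Moreover, since $\psi$ is a diffeomorphism and $\mathcal{E}$ is complete on $\nu_N$, the field $X_0$ is automatically complete \emph{as a vector field on the manifold $U$}.

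The remaining issue is to obtain a vector field defined on all of $M$ without destroying (i) and (iii). I would choose a smooth $\chi\colon M\to[0,1]$ with $\chi\equiv 1$ on a smaller tubular neighbourhood of $N$ and $\operatorname{supp}\chi\subseteq U$, and set $X:=\chi X_0$, extended by zero. Conditions (i) and (iii) survive: because $X_0\at{N}=0$ and $\chi\equiv 1$ near $N$, the Leibniz rule gives $\D^N(\chi X_0)=\D^N X_0$ (the term carrying $\D\chi$ is killed by $X_0\at{N}=0$), so by Proposition \ref{Prop: Endomorphism} the endomorphism $D_X$, and hence $T\nu(X)$, is unchanged; thus $T\nu(X)=\mathcal{E}$ still holds.

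The main obstacle is then the completeness of $X$ on $M$, which I expect to settle fibrewise in the model $\nu_N$. Since $\chi X_0$ pulls back to $\tilde\chi\,\mathcal{E}$ with $\tilde\chi=\chi\circ\psi\in[0,1]$, and $\mathcal{E}$ is vertical and radial, every integral curve stays on a single ray $\{s\,w\}$ of a fibre and is governed by the scalar equation $\dot a=\tilde\chi(aw)\,a$ with $a(0)=1$. As $0\le\tilde\chi\le 1$, this forces $a(t)\le a(0)e^{t}$ and $\dot a\ge 0$, which rules out finite-time blow-up in either time direction; curves that reach $\{\chi=0\}$ simply come to rest, and curves through points outside $\operatorname{supp}\chi$ are constant. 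Hence $X$ is complete on $M$, and it is the desired Euler-like vector field. The only delicate points I anticipate are the identification $T\nu(X_0)=\mathcal{E}$ via the tubular-neighbourhood condition and the verification that multiplication by $\chi$ with $\chi\equiv 1$ near $N$ leaves $T\nu$ intact; the completeness, once reduced to the scalar radial equation, is routine.
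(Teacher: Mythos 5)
Your proposal is correct and follows essentially the same route as the paper: push forward the Euler vector field of $\nu_N$ through a tubular neighbourhood, use $\nu(\psi)=C_{\nu_N}^{-1}$ together with Proposition \ref{Prop: Identification} to get $T\nu(\psi_*\mathcal{E})=\mathcal{E}$, and cut off by a bump function equal to $1$ near $N$. You merely make explicit two points the paper leaves to the reader, namely that the cutoff does not alter $T\nu(X)$ and that the resulting field is complete (via the radial scalar ODE), and both verifications are sound.
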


\begin{proof}
Let us choose a tubular neighbourhood  
	\begin{align*}
	\psi\colon \nu_N\to U.
	\end{align*}
For the vector field $X=\psi_* \mathcal{E}$ multiplied by a suitable bump function which is 1 in a neighbourhood of $N$, we 
have 
	\begin{align*}
	T\nu(X)&=\frac{\D}{\D t}\At{t=0}\nu(\Phi^X_t)= \frac{\D}{\D t}\At{t=0}\nu(\psi\circ\Phi^\mathcal{E}_t\circ \psi^{-1})\\&
	=\frac{\D}{\D t}\At{t=0}\nu(\psi)\circ\nu(\Phi^\mathcal{E}_t)\circ \nu( \psi^{-1})\\&
	=\frac{\D}{\D t}\At{t=0} \Phi^\mathcal{E}_t=\mathcal{E},
	\end{align*}
where we used Proposition \ref{Prop: Identification} and the fact that $\nu( \psi)=C^{-1}_{\nu_N}$.
\end{proof}

\begin{lemma}\label{Lem: ExTubNei}
Let $M$ be a manifold, $N\hookrightarrow M$ a submanifold and $X\in \Secinfty(TM)$ be a Euler-like vector field. Then there 
exists a tubular unique neighbourhood embedding 
	\begin{align*}
	\psi\colon \nu_N\to U,
	\end{align*}
 such that $\psi^* X=\mathcal{E}$.
\end{lemma}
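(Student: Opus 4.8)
The plan is to produce $\psi$ as a flow-conjugation limit, in the spirit of \cite{2016arXiv160505386B}. First I would choose any tubular neighbourhood embedding $\psi_0\colon\nu_N\to U_0$ (these exist by the classical tubular neighbourhood theorem) and set
	\begin{align*}
	\psi_t\colon\nu_N\to M,\qquad \psi_t:=\Phi^X_t\circ\psi_0\circ\Phi^\mathcal{E}_{-t}.
	\end{align*}
This is well defined for all $t\ge 0$ and all $v\in\nu_N$, since $\Phi^\mathcal{E}_{-t}(v)=e^{-t}v\in\nu_N$ and $X$ has complete flow. Heuristically, as $t\to\infty$ the factor $\Phi^\mathcal{E}_{-t}$ contracts $v$ onto the zero section while the expanding flow $\Phi^X_t$ transports the result back out; the Euler-like condition is exactly what makes these two effects cancel to leading order, so that $\psi:=\lim_{t\to\infty}\psi_t$ exists. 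I would then verify that $\psi$ is the desired embedding.

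The crux, and the main obstacle, is the convergence of $\psi_t$. The relevant input is that $W:=X-(\psi_0)_*\mathcal{E}$ vanishes to second order along $N$: both summands restrict to $0$ on $N$ and have the same linearisation, namely $T\nu(X)=\mathcal{E}=T\nu\big((\psi_0)_*\mathcal{E}\big)$ (the latter because $\psi_0$ is a tubular neighbourhood), so $T\nu(W)=0$ and hence, by the description of $\D^N$ together with Proposition \ref{Prop: Endomorphism}, $\psi_0^*W=O(|y|^2)$ in a fibre coordinate $y$ of $\nu_N$. Differentiating $\psi_t$ in $t$ and inserting this quadratic bound, a Gronwall-type estimate shows that $\frac{\D}{\D t}\psi_t$ decays exponentially near the zero section; hence the corresponding improper integral converges and $\psi=\lim_{t\to\infty}\psi_t$ exists, smoothly in $v$. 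I expect this estimate to be the only genuinely analytic step of the proof.

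Granting convergence, the remaining properties are formal. First, $\psi\at{N}=\id$, since for $p\in N$ one has $\psi(0_p)=\lim_t\Phi^X_t(\psi_0(0_p))=p$ using $\psi_0\at{N}=\id$ and $X\at{N}=0$. For the normal derivative, functoriality of $\nu$ gives
	\begin{align*}
	\nu(\psi_t)=\nu(\Phi^X_t)\circ\nu(\psi_0)\circ\nu(\Phi^\mathcal{E}_{-t}).
	\end{align*}
Here $\nu(\psi_0)=C_{\nu_N}^{-1}$ by definition of a tubular neighbourhood; $\nu(\Phi^X_t)=\Phi^\mathcal{E}_t$ is multiplication by $e^{t}$ on $\nu_N$, because $t\mapsto\nu(\Phi^X_t)$ is a one-parameter group with generator $T\nu(X)=\mathcal{E}$; and, $\Phi^\mathcal{E}_{-t}$ being the vector bundle automorphism $v\mapsto e^{-t}v$, Proposition \ref{Prop: Identification} yields $\nu(\Phi^\mathcal{E}_{-t})=C_{\nu_N}\circ(e^{-t}\,\cdot\,)\circ C_{\nu_N}^{-1}$. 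The two scalings cancel, so $\nu(\psi_t)=C_{\nu_N}^{-1}$ for every $t$, and therefore $\nu(\psi)=C_{\nu_N}^{-1}$. Thus $\psi$ is a local diffeomorphism along $N$ equal to the identity there, i.e. a tubular neighbourhood embedding onto an open set $U$. Finally, reindexing the limit by $t\mapsto t-s$ and using the group laws of both flows gives $\psi\circ\Phi^\mathcal{E}_s=\Phi^X_s\circ\psi$ for all $s$, that is $\psi^*X=\mathcal{E}$; this same relation lets one spread $\psi$ out equivariantly to all of $\nu_N$.

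For uniqueness, let $\psi_1,\psi_2$ both satisfy the conclusion and put $\phi:=\psi_2^{-1}\circ\psi_1\colon\nu_N\to\nu_N$, a diffeomorphism covering $\id_N$. From $\psi_i^*X=\mathcal{E}$ we obtain $\phi^*\mathcal{E}=\mathcal{E}$, i.e. $\phi$ commutes with the scaling flow, $\phi(e^sv)=e^s\phi(v)$. Being smooth and fibrewise positively homogeneous of degree one, $\phi$ is fibrewise linear, so Proposition \ref{Prop: Identification} gives $\phi=C_{\nu_N}^{-1}\circ\nu(\phi)\circ C_{\nu_N}$; and $\nu(\phi)=\nu(\psi_2)^{-1}\circ\nu(\psi_1)=C_{\nu_N}\circ C_{\nu_N}^{-1}=\id$. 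Hence $\phi=\id$ and $\psi_1=\psi_2$.
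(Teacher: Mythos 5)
The paper does not actually prove this lemma --- it defers entirely to \cite{2016arXiv160505386B} --- and your conjugation limit $\psi=\lim_{t\to\infty}\Phi^X_t\circ\psi_0\circ\Phi^{\mathcal E}_{-t}$ is precisely the construction used in that reference, so you have reconstructed the intended argument rather than found an alternative route. The formal parts of your write-up are correct and cleanly done: the computation $\nu(\psi_t)=C_{\nu_N}^{-1}$, the equivariance $\psi\circ\Phi^{\mathcal E}_s=\Phi^X_s\circ\psi$ obtained by reindexing, and the uniqueness argument via positive homogeneity plus Proposition \ref{Prop: Identification}.

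The one genuine flaw sits exactly where you locate the crux. From $T\nu(W)=0$ you conclude $\psi_0^*W=O(|y|^2)$; this is false. In adapted coordinates $(x,y)$ with $N=\{y=0\}$, a vector field vanishing along $N$ with vanishing $T\nu$ has the form $a_i^j(x)\,y^i\partial_{x^j}+O(|y|^2)$: by Proposition \ref{Prop: Endomorphism} the operator $T\nu$ only records the normal component of the linearization, so a term linear in $y$ but pointing along $N$ survives (e.g.\ $X=\mathcal E+a(x)y\,\partial_x$ on $N\times\mathbb R$ is Euler-like, yet $X-\mathcal E$ is only $O(|y|)$). The statement the argument actually needs is the weighted one: assigning degree $+1$ to $y$ and $-1$ to $\partial_y$, the conditions $W\at{N}=0$ and $T\nu(W)=0$ say every term of $W$ has degree $\geq 1$, equivalently $s^{-1}\kappa_s^*W$ extends smoothly to $s=0$ --- bounded, but in general not tending to $0$. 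With this replacement one computes
\begin{align*}
\frac{\D}{\D s}\bigl(\Phi^X_{-\log s}\circ\psi_0\circ\kappa_s\bigr)
=-\tfrac1s\,\bigl((\Phi^X_{-\log s})_*W\bigr)\circ\bigl(\Phi^X_{-\log s}\circ\psi_0\circ\kappa_s\bigr),
\end{align*}
and boundedness of the right-hand side on compacta (in all derivatives, so that the limit is smooth and $\nu(\psi)=\lim\nu(\psi_t)$ is legitimate) gives convergence of the improper integral, i.e.\ $O(e^{-t})$ decay in your parametrization. Note, however, that controlling this expression also requires comparing $T\Phi^X_{-\log s}$ with $\kappa_{1/s}$, not merely estimating $W$ itself, so the analytic content is both different from and heavier than the Gronwall estimate on a quadratically vanishing remainder that you describe. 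Replace the quadratic-vanishing claim by the degree-$\geq 1$ statement and carry out that estimate (or the equivalent Moser-path argument of \cite{2016arXiv160505386B}), and the proof is complete.
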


\begin{proof}
The proof can be found in \cite{2016arXiv160505386B}.
\end{proof}


\begin{proposition}\label{Prop: NormDertoEuler}
Let $(M,N)$ be a pair of manifolds and let $X\in \Secinfty(TM)$ be a vector field, such that $X\at{N}=0$ and is complete. 
Then
$X$ is Euler-like, if and only if $\D^N X$ followed by the projection $TM\at{N}\to \nu_N$ is identity.
\end{proposition}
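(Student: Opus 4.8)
The plan is to compute $T\nu(X)$ explicitly from the linearization of the flow along $N$, which is exactly what Proposition~\ref{Prop: Endomorphism} provides, and then to compare the result with the Euler vector field. Since completeness of $X$ and the vanishing $X\at{N}=0$ are already hypotheses on both sides of the claimed equivalence, the entire content is to show that $T\nu(X)=\mathcal{E}$ holds if and only if $\D^N X$ followed by the projection $\mathrm{pr}\colon TM\at{N}\to\nu_N$ equals the identity. First I would invoke Proposition~\ref{Prop: Endomorphism}: because $X\at{N}=0$ one has $T\Phi^X_t\at{N}=\exp(tD_X)$ for a unique $D_X\in\Secinfty(\End(TM\at{N}))$, and the commuting triangle there records both $TN\subseteq\ker D_X$ and the factorization $D_X=\D^N X\circ\mathrm{pr}$. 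The inclusion $TN\subseteq\ker D_X$ shows that $D_X$ preserves $TN$, hence descends to an endomorphism $\overline{D}_X\in\Secinfty(\End(\nu_N))$ determined by $\overline{D}_X\circ\mathrm{pr}=\mathrm{pr}\circ D_X$. Substituting $D_X=\D^N X\circ\mathrm{pr}$ and using surjectivity of $\mathrm{pr}$ yields $\overline{D}_X=\mathrm{pr}\circ\D^N X$.

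Next I would identify the induced map on the normal bundle. As $X\at{N}=0$, the flow fixes $N$ pointwise, so each $\Phi^X_t$ is a map of pairs and $\nu(\Phi^X_t)$ is precisely the descent of $T\Phi^X_t\at{N}=\exp(tD_X)$ to $\nu_N=TM\at{N}/TN$. Since descent commutes with exponentiation, this gives the one-parameter group $\nu(\Phi^X_t)=\exp(t\overline{D}_X)$ of bundle automorphisms of $\nu_N$. Differentiating at $t=0$, and recalling that the infinitesimal generator of the exponential flow of a fibrewise endomorphism $A$ is the associated linear vector field on the total space, I obtain that $T\nu(X)$ is the linear vector field determined by $\overline{D}_X$. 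The Euler vector field $\mathcal{E}$ is, by definition, the linear vector field determined by $\mathrm{id}_{\nu_N}$, since its flow is the scaling $v\mapsto e^tv=\exp(t\,\mathrm{id})v$. Because the assignment sending an endomorphism to its linear vector field is injective, $T\nu(X)=\mathcal{E}$ is equivalent to $\overline{D}_X=\mathrm{id}_{\nu_N}$, and by the previous paragraph this is exactly $\mathrm{pr}\circ\D^N X=\mathrm{id}_{\nu_N}$, which is the asserted condition.

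The step I expect to be the main obstacle is the bookkeeping in the second paragraph: checking rigorously that $\nu(\Phi^X_t)$ really is the exponential $\exp(t\overline{D}_X)$ and that its derivative at $t=0$ reproduces the linear vector field of $\overline{D}_X$. Concretely, this amounts to pinning down the canonical bijection between fibrewise endomorphisms of $\nu_N$ and linear vector fields on the total space $\nu_N$, together with the normalization that $\mathrm{id}_{\nu_N}$ corresponds to $\mathcal{E}$. Once this identification is fixed, the remaining pieces are an immediate application of Proposition~\ref{Prop: Endomorphism} and of the definition of $\D^N$, so no further genuine difficulty arises.
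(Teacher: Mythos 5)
Your proof is correct and follows essentially the same route as the paper: both reduce the statement to Proposition~\ref{Prop: Endomorphism}, express $\nu(\Phi^X_t)$ as the descent of $\exp(tD_X)$ to $\nu_N$, and compare with the Euler flow, using the commuting triangle to identify the induced endomorphism with $\mathrm{pr}\circ\D^N X$. Your version merely makes explicit the bookkeeping (the descended endomorphism $\overline{D}_X$ and the bijection between fibrewise endomorphisms and linear vector fields) that the paper's terser argument leaves implicit.
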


\begin{proof}
Let $X\in \Secinfty(TM)$ be given as in the proposition. According to Proposition \ref{Prop: Endomorphism}, there exists a 
unique $D_X\in \Secinfty(\End(TM\at{N}))$, such that $T\Phi_t^X\at{N}=\exp(tD_X)$. Let $[X_p]\in \nu_N$, then 
	\begin{align*}
	\nu(\Phi^x_t)([X_p])&
	=[T\Phi^X_t(X_p)]=[\exp(tD_X)(X_p)].
	\end{align*}
This is just equal to the flow of the Euler vector field, if $\pr_{\nu_N}\circ D_X=\id_{\nu_N}$. Using Proposition \ref{Prop: 
Endomorphism}, we have $\D^N X=D_X$ and hence the claim.
\end{proof}

Note that for a pair of manifolds $(M,N)$ and a Euler like vector field $X\in \Secinfty(TM)$, the set 
	\begin{align*}
	\{p\in M\ | \ \lim_{t\to -\infty} \Phi_t^X(p) \text{ exists and lies in }  N\}
	\end{align*}
is an open subset in $M$ containing $N$, such that that the action of  $\Phi_t^X$ shrinks to this set. Moreover, for 
a tubular neighbourhood $\psi\colon \nu_N \to U$, such that $\psi^*X=\mathcal{E}$, we have that 
	\begin{align*}
	U=\{p\in M\ | \ \lim_{t\to -\infty} \Phi_t^X(p) \text{ exists and lies in }  N\}.
	\end{align*}
Let us denote by $\lambda_s=\Phi^X_{\log(s)}\at{U}$. 
We obtain,  that $\lambda_s$ is smooth for all $s\in \mathbb{R}_0^+$. Moreover, we have that 
	\begin{align}\label{Eq: FlowCom}
	\psi\circ \lambda_s=\kappa_s\circ \psi,
	\end{align}	 
where we denote by $\kappa_s\colon \nu_N\to \nu_N$ the map $[X_p]\mapsto [s X_p]$. Note that $\kappa_0\colon \nu_N\to N$ coincides with the bundle projection, to be more precise $k_0=\pr_\nu\circ j$, where $\pr_\nu$ is the bundle projection and 
$j\colon N\to \nu_N$ the canonical inclusion. 

Let us add now the line bundle case 
\begin{definition}
Let $L\to M$ be a line bundle and $N\hookrightarrow M$ be a submanifold. A derivation $\Delta\in \Secinfty(DL)$ is called 
Euler-like, if 
	\begin{enumerate}
	\item $\Delta\at{N}=0$,
	\item $\sigma(\Delta)$ is an Euler-like vector field.
	\end{enumerate}
\end{definition}
This definition turns out to be the correct one for our purposes, since we can prove basically all results, which are available for Euler-like vector fields. Let us start collecting them.
\begin{proposition}
Let $L\to M$ be a line bundle and let $\Delta\in \Secinfty(DL)$ be an Euler-like derivation with respect to 
$N\hookrightarrow M$, then the flow $\Phi^\Delta_t\in \Aut(L)$ of $\Delta$ induces the map
	\begin{align*}
	\Lambda_s=\Phi_{\log(s)}^\Delta, 
	\end{align*}
which can be, restricted to $U=\{p\in M\ | \ \lim_{t\to -\infty} \Phi_t^{\sigma(X)}(p) \text{ exists and lies in }  N\}$, 
extended smoothly to $s=0$. Moreover, the map
	\begin{align*}
	\Lambda_0\colon L\to L_N
	\end{align*}
is a regular line bundle morphism. 
\end{proposition}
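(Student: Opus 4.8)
The plan is to separate the base direction, which is already governed by the Euler-like vector field $X:=\sigma(\Delta)$, from the remaining fibrewise (line-bundle) direction, and to control the latter by an explicit integral. By Lemma~\ref{Lem: ExTubNei} we may fix a tubular neighbourhood $\psi\colon\nu_N\to U$ with $\psi^*X=\mathcal{E}$; by the discussion culminating in \eqref{Eq: FlowCom} the base maps $\lambda_s=\Phi^X_{\log(s)}\at{U}$ extend smoothly to $s=0$, satisfy $\psi\circ\lambda_s=\kappa_s\circ\psi$, and have as limit the retraction $\lambda_0=\pr_\nu\circ\psi^{-1}\colon U\to N$. Since $\Phi^\Delta_t\in\Aut(L)$ covers $\Phi^X_t$, the induced maps $\Lambda_s$ cover $\lambda_s$, so the base component is settled and the whole problem reduces to the behaviour of the fibrewise scaling of $\Lambda_s$ as $s\to0$. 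As smoothness and regularity are local properties, it suffices to argue near an arbitrary point $p_0\in U$.

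First I would choose a trivialization adapted to the trajectories. Picking a small ball $B\subseteq N$ around $\lambda_0(p_0)$ over which $L$ is trivial and setting $W:=\psi(\nu_N\at{B})\subseteq U$, the line bundle $L\at{W}$ is again trivial (the total space $\nu_N\at{B}$ being contractible); moreover, because $\lambda_0(\lambda_u(p))=\lambda_0(p)$, the entire trajectory $u\mapsto\lambda_u(p)=\psi\big(u\,\psi^{-1}(p)\big)$, $u\in[0,1]$, stays inside $W$ whenever $p\in W$. In such a trivialization $\Delta$ becomes a derivation of a trivial line bundle, i.e.\ a symbol $X$ together with a function $f$, and the hypothesis $\Delta\at{N}=0$ forces $f\at{N\cap W}=0$ in addition to $X\at{N\cap W}=0$. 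The lift $\Phi^\Delta_t$ then acts fibrewise by the multiplicative cocycle
\begin{align*}
A(t,p)=\exp\Big(-\int_0^t f\big(\Phi^X_\tau(p)\big)\,\D\tau\Big),
\end{align*}
so that $\Lambda_s$ scales the fibre $L_p$ by the factor $A(\log(s),p)$ (up to the sign convention fixing the flow).

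The key step is the substitution $\tau=\log(u)$, which rewrites the exponent over $u\in[s,1]$:
\begin{align*}
A(\log(s),p)=\exp\Big(\int_s^1\frac{f(\lambda_u(p))}{u}\,\D u\Big).
\end{align*}
The integrand is a priori singular at $u=0$ because of the factor $1/u$, and this is exactly where the homogeneity enters. Writing $\xi=\psi^{-1}(p)$ we have $f(\lambda_u(p))=(f\circ\psi)(u\xi)$, and since $f\circ\psi$ vanishes on the zero section, Hadamard's lemma gives $(f\circ\psi)(u\xi)=u\,g(u,\xi)$ for a smooth function $g$. Hence $f(\lambda_u(p))/u=g(u,\xi)$ extends smoothly across $u=0$, the integral $\int_s^1 g(u,\xi)\,\D u$ is smooth in $(s,p)$ up to $s=0$, and therefore $A(\log(s),p)$ extends smoothly to $s=0$ with value $\exp\big(\int_0^1 g(u,\xi)\,\D u\big)\neq0$. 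Thus $\Lambda_s$ extends smoothly to a map $\Lambda_0$ whose fibrewise part is the nowhere-vanishing scalar above and which covers $\lambda_0\colon U\to N$; being fibrewise invertible, $\Lambda_0\colon L\to L_N$ is a regular line bundle morphism, as claimed.

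I expect the main obstacle to be precisely this control of the scaling factor as $s\to0$: the naive expression for $A(\log(s),p)$ diverges, and only the interplay between $f\at{N}=0$ and the homogeneity encoded in $\psi^*X=\mathcal{E}$ (through the rescaling $\kappa_u$) produces the $O(u)$ estimate that cancels the $1/u$ singularity. A secondary technical point, which the choice of $W$ is designed to avoid, is that $L$ need not be trivial over all of $U$: one must make sure the chosen trivialization covers the whole trajectory joining $p$ to its limit $\lambda_0(p)\in N$, which is possible here because all these trajectories project under $\lambda_0$ to a single small ball in $N$.
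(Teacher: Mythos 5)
Your argument is correct and follows exactly the route the paper indicates (it only states that the claim is ``an easy verification using a tubular neighbourhood $\psi$ with $\psi^*\sigma(\Delta)=\mathcal{E}$''): reduce to the adapted tubular neighbourhood, where the base maps $\lambda_s$ are the rescalings $\kappa_s$, and control the fibrewise cocycle. Your Hadamard-lemma cancellation of the $1/u$ singularity in $\int_s^1 f(\lambda_u(p))\,\D u/u$, using $f\at{N}=0$, is precisely the ``easy verification'' the paper omits, and it is carried out correctly.
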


\begin{proof}
The proof is an easy verification using a tubular neighbourhood $\psi\colon \nu_N\to U$, such that 
$\psi^*\sigma(X)=\mathcal{E}$.
\end{proof}

\begin{definition}
Let $L\to M$ be a line bundle and $N\hookrightarrow M$ be a submanifold. A fat tubular neighbourhood is a regular line bundle  morphism 
	\begin{align*}
	\Psi\colon L_\nu \to L_U,
	\end{align*}
where the line bundle $L_\nu$ is given by the pull-back
	\begin{center}
		\begin{tikzcd}
 		L_\nu \arrow[r]\arrow[d]&   L_N \arrow[d]   \\
 		\nu_N\arrow[r] & N
		\end{tikzcd}
	,
	\end{center}	
covering a tubular neighbourhood $\psi\colon \nu_N\to U$, such that $\Psi\at{N}\colon L_N\to L_N$ is the identity. 
\end{definition}

\begin{lemma}\label{Ex: ExFatTubNei}
Let $L\to M$ be a line bundle, let $N\hookrightarrow M$ be a submanifold and let $\psi\colon\nu_N\to U$ be a tubular 
neighbourhood. Then there exists a fat tubular neighbourhood covering $\psi$. 
\end{lemma}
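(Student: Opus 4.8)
The plan is to reduce the statement to the line bundle morphism $\Lambda_0$ furnished by the Proposition preceding the definition of fat tubular neighbourhoods: I would lift the Euler vector field attached to $\psi$ to an Euler-like derivation on $L_U$, and then obtain $\Psi$ by inverting the resulting $\Lambda_0\colon L_U\to L_N$ fibrewise along $\psi$.

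First I would set $X=\psi_*\mathcal{E}\in\Secinfty(TU)$. This is an Euler-like vector field with $\psi^*X=\mathcal{E}$, and by the uniqueness part of Lemma~\ref{Lem: ExTubNei} the given $\psi$ is its associated tubular neighbourhood, so the scalings of \eqref{Eq: FlowCom} intertwining $\lambda_s$ with the fibre dilations $\kappa_s$ are available; in particular $\lambda_0\circ\psi=\psi\circ\kappa_0$ equals the bundle projection $\pr_\nu\colon\nu_N\to N$ (composed with $N\hookrightarrow U$). Next I would lift $X$ to an Euler-like derivation. Since the symbol $\sigma\colon DL_U\to TU$ is surjective with kernel $\End(L_U)=\mathbb{R}_U$, any $\Delta_0$ with $\sigma(\Delta_0)=X$ satisfies $\sigma(\Delta_0)\at{N}=X\at{N}=0$, so $\Delta_0\at{N}\in\Secinfty(\End(L)\at{N})=C^\infty(N)$; extending this function to $f\in C^\infty(U)$ and setting $\Delta:=\Delta_0-f$ yields a derivation with $\sigma(\Delta)=X$ and $\Delta\at{N}=0$, hence Euler-like. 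The quoted Proposition then provides the smooth extension of $\Lambda_s=\Phi^\Delta_{\log s}$ to $s=0$ on $U$, with $\Lambda_0\colon L_U\to L_N$ a regular line bundle morphism covering $\lambda_0$.

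With $\Lambda_0$ in hand, the morphism $\Psi$ is essentially forced. As $\Lambda_0$ is fibrewise invertible and covers $\lambda_0$, it identifies $L_U\cong\lambda_0^*L_N$; pulling back along $\psi$ and using $\lambda_0\circ\psi=\pr_\nu$ gives $\psi^*L_U\cong\pr_\nu^*L_N=L_\nu$, and composing with the tautological morphism $\psi^*L_U\to L_U$ over $\psi$ defines $\Psi\colon L_\nu\to L_U$. Equivalently, $\Psi$ is the unique regular line bundle morphism over $\psi$ with $\Lambda_0\circ\Psi=q$, where $q\colon L_\nu\to L_N$ is the canonical projection of the pullback; concretely $\Psi_v=(\Lambda_0\at{\psi(v)})^{-1}\circ q_v$ for $v\in\nu_N$. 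I would then check the normalisation $\Psi\at{N}=\id_{L_N}$: since $\Delta\at{N}=0$, the flow $\Phi^\Delta_t$ acts as the identity on each fibre $L_q$ with $q\in N$, so $\Lambda_0\at{N}=\id_{L_N}$, and because $q\at{N}=\id$ as well we obtain $\Psi\at{N}=\id_{L_N}$.

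The only genuinely delicate step is the smooth extension of $\Lambda_s$ to $s=0$ and the regularity of $\Lambda_0$, but this is exactly the content of the preceding Proposition; once it is granted, the remainder is the formal fibrewise inversion of $\Lambda_0$ along $\psi$ together with the pullback bookkeeping that makes $\Psi$ smooth and pins down its value on $N$.
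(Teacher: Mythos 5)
Your construction is correct, and it is genuinely different from what the paper does: the paper gives no argument at all and defers to \cite[Chapter 3]{2017arXiv170508962T}, where existence of fat tubular neighbourhoods is obtained by a homotopy argument (the retraction of $\nu_N$ onto $N$ forces $\psi^*L_U\cong \mathrm{pr}^*L_N=L_\nu$ as line bundles, and the isomorphism is then normalised to be the identity over $N$). You instead build $\Psi$ explicitly: lift $\mathcal{E}$ through $\psi$ to an Euler-like vector field, lift that through the symbol sequence $0\to\mathbb{R}_U\to DL_U\to TU\to 0$ to an Euler-like derivation $\Delta$ (correcting by a function so that $\Delta$ vanishes on $N$), and invert the regular morphism $\Lambda_0\colon L_U\to L_N$ fibrewise along $\psi$. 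This is internally consistent with the paper's toolkit --- the proposition supplying $\Lambda_0$ is proved using only an ordinary tubular neighbourhood, so there is no circularity with the fat tubular neighbourhood machinery --- and it buys something the citation does not: the resulting $\Psi$ is canonical once $\Delta$ is chosen, and the argument dovetails with the uniqueness statement of Lemma \ref{Ex: UnFatTubNei}. The homotopy-theoretic route is softer and shorter but non-constructive. Two small points you should make explicit: $X=\psi_*\mathcal{E}$ lives only on $U$, so either cut it off by a bump function equal to $1$ near $N$ (at the cost of shrinking the neighbourhood on which $\lambda_s$ is intertwined with $\kappa_s$) or note that only the flow for $t\le 0$, which stays in $U$, is ever used; and the identity $\Lambda_0$ restricted to $L_N$ equals $\mathrm{id}_{L_N}$ should be recorded as following from $\Delta$ vanishing on $N$ (so the flow fixes each fibre over $N$ pointwise), exactly as you argue.
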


\begin{proof}
The proof can be found in \cite[Chapter 3]{2017arXiv170508962T}. 
\end{proof}

For a line bundle $L\to N$ and a vector bundle $E\to N$ there is always a canonical 
Derivation $\Delta_{\mathcal{E}}\in \Secinfty(DL_E)$, such that $\sigma(\Delta_{\mathcal{E}})=\mathcal{E}$ constructed as 	
follows:
Consider the map 
	
\begin{center}
		\begin{tikzcd}
 		L_E \arrow[r, "P"]\arrow[d]&   L \arrow[d]   \\
 		E\arrow[r, "p"] & N
		\end{tikzcd}
	\end{center}
and the corresponding map $DP\colon L_E\to L_N$. We have that canonically $\ker(DP)\cong \mathrm{Ver}(E)$, which 
induces a flat (partial) connection $\nabla\colon \mathrm{Ver}(E)\to DL_\nu$. Since the Euler vector field is 
canonically vertical, we can define $\Delta_\mathcal{E}=\nabla_\mathcal{E}$.  

\begin{proposition}
Let $L\to N$ be a line bundle and let $E\to N$ be a vector bundle. Then the flow $\Phi_t$ of 
$\Delta_\mathcal{E}\in \Secinfty(DL_E)$
is given by 
	\begin{align*}
	\Phi_t(v_p,l_p)=(\E^t\cdot v_p,l_p)
	\end{align*}
for all $(v_p,l_p)\in L_E$.
\end{proposition}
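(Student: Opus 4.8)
The plan is to write down the flow explicitly and then identify it with $\Phi_t$ by the uniqueness of flows of derivations. Define $\tilde\Phi_t\colon L_E\to L_E$ by $\tilde\Phi_t(v_p,l_p)=(\E^t v_p,l_p)$. Each $\tilde\Phi_t$ is a regular line bundle automorphism of $L_E$, and $\tilde\Phi_0=\id$ together with $\E^t\E^s=\E^{t+s}$ shows that $t\mapsto\tilde\Phi_t$ is a one-parameter group; let $\tilde\Delta\in\Secinfty(DL_E)$ be its infinitesimal generator. Since the flow of a derivation is uniquely determined by that derivation, it suffices to prove $\tilde\Delta=\Delta_\mathcal{E}$, for then $\Phi_t=\tilde\Phi_t$ and we are done.

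First I would compare symbols. The diffeomorphism of $E$ covered by $\tilde\Phi_t$ is $v_p\mapsto\E^t v_p$, which is precisely the flow of the Euler vector field $\mathcal{E}$. Hence $\sigma(\tilde\Delta)=\mathcal{E}=\sigma(\Delta_\mathcal{E})$, so $\tilde\Delta$ and $\Delta_\mathcal{E}$ have the same symbol and therefore differ by a section of $\End(L_E)\cong C^\infty(E)$; it remains to check that this difference vanishes.

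To see this I would evaluate both derivations on the basic sections. For $\mu\in\Secinfty(L)$ let $\mu_E\in\Secinfty(L_E)$ be the pullback section characterized by $P\circ\mu_E=\mu\circ p$; if $\mu$ is a local frame of $L$, then $\mu_E$ is a local frame of $L_E$. Because $\tilde\Phi_t$ only rescales the $E$-component and fixes the $L_p$-component, a direct computation gives $\tilde\Phi_t^*\mu_E=\mu_E$ for all $t$, whence $\tilde\Delta(\mu_E)=0$. On the other side, $\Delta_\mathcal{E}=\nabla_\mathcal{E}$, and the flat partial connection $\nabla$ was built precisely so that the basic sections are its flat sections, so $\nabla_\mathcal{E}(\mu_E)=0$ as well. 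Writing a general section locally as $f\,\mu_E$ with $f\in C^\infty(E)$ and applying the Leibniz rule $\Delta(f\,\mu_E)=\sigma(\Delta)(f)\,\mu_E+f\,\Delta(\mu_E)$, both $\tilde\Delta$ and $\Delta_\mathcal{E}$ send $f\,\mu_E$ to $\mathcal{E}(f)\,\mu_E$. Hence $\tilde\Delta=\Delta_\mathcal{E}$.

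The only point requiring care is this last comparison, namely unwinding the construction of $\nabla$: one must use that the canonical isomorphism $\ker(DP)\cong\mathrm{Ver}(E)$ yields exactly the connection for which the basic sections $\mu_E$ are covariantly constant. With that characterization in place, every remaining step is a routine verification, and uniqueness of the flow then delivers $\Phi_t(v_p,l_p)=(\E^t v_p,l_p)$.
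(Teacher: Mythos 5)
Your proof is correct and fills in, in the natural way, exactly the ``easy verification'' the paper leaves implicit: both arguments hinge on the fact that the flow covers the flow of the Euler vector field, combined with the observation that $\Delta_{\mathcal{E}}=\nabla_{\mathcal{E}}$ annihilates the pullback (basic) sections, which pins down the lift uniquely. The identification of the generator of $\tilde\Phi_t$ via its action on $\mu_E$ and the Leibniz rule is precisely the intended check, so no further comment is needed.
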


\begin{proof}
This proof is an easy verification using the fact that $\Phi_t$ covers the flow of the Euler vector field.
\end{proof}

Note that for the flow $\Phi_t$ of the canonical Euler-like derivation $\Delta_\mathcal{E}\in \Secinfty(DL_E)$, we have that 
	\begin{align*}
	P_s=\Phi_{\log(s)}\colon L_E\to L_E
	\end{align*}
is defined for all $s>0$ and can be extended smoothly to $s=0$, moreover $P_0$ coincides with the canonical projection
$P\colon L_E\to L$ followed by the canonical inclusion $J\colon L\to L_E$.

\begin{lemma}\label{Ex: UnFatTubNei}
Let $L\to M$ be a line bundle, let $N\hookrightarrow M$ be a submanifold and let $\Delta\in \Secinfty(DL)$ be an Euler-like 
derivation. Then there is a unique fat tubular neighbourhood  $\Psi\colon L_\nu\to L_U$, such that $\Psi^*\Delta=
\Delta_{\mathcal{E}}$.  
\end{lemma}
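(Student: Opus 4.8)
The plan is to reduce the statement to a single cohomological equation on the model $L_\nu$ and to solve that equation explicitly by integrating along the Euler flow.

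First I would pin down the underlying tubular neighbourhood. Since $\Delta$ is Euler-like, $X:=\sigma(\Delta)$ is an Euler-like vector field, so Lemma \ref{Lem: ExTubNei} supplies the \emph{unique} tubular neighbourhood $\psi\colon\nu_N\to U$ with $\psi^*X=\mathcal{E}$, and Lemma \ref{Ex: ExFatTubNei} provides a fat tubular neighbourhood $\Psi_0\colon L_\nu\to L_U$ covering this $\psi$ with $\Psi_0\at{N}=\id$. Applying the symbol map gives $\sigma(\Psi_0^*\Delta)=\psi^*\sigma(\Delta)=\mathcal{E}=\sigma(\Delta_{\mathcal{E}})$, so $\Psi_0^*\Delta$ and $\Delta_{\mathcal{E}}$ are two derivations of $L_\nu$ with the same symbol and hence differ by a section of $\ker\sigma\cong\Secinfty(\End(L_\nu))\cong C^\infty(\nu_N)$; write $\Psi_0^*\Delta=\Delta_{\mathcal{E}}+f$. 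Because $\Delta\at{N}=0$, $\Psi_0\at{N}=\id$ and $\Delta_{\mathcal{E}}\at{N}=0$, the function $f$ vanishes on the zero section $N$.

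Next I would correct $\Psi_0$ by a gauge transformation. Any fat tubular neighbourhood over $\psi$ has the form $\Psi=\Psi_0\circ m_{e^g}$, where $m_{e^g}$ is fibrewise multiplication by $e^g$ on $L_\nu$, and the normalisation $\Psi\at{N}=\id$ forces $g\at{N}=0$. A one-line Leibniz computation gives $m_{e^g}^*D=D+\sigma(D)(g)$ for any derivation $D$, whence $\Psi^*\Delta=\Delta_{\mathcal{E}}+f+\mathcal{E}(g)$. Thus the entire statement reduces to solving $\mathcal{E}(g)=-f$ with $g\at{N}=0$. I would solve this by setting $g=-\int_0^1\frac{f\circ\kappa_s}{s}\,\D s$: since $\kappa_s=\Phi^{\mathcal{E}}_{\log s}$ one has $\tfrac{\D}{\D s}(f\circ\kappa_s)=\tfrac1s(\mathcal{E}f)\circ\kappa_s$, and the substitution $\kappa_{s}\kappa_{e^\tau}=\kappa_{se^\tau}$ turns $g(\kappa_{e^\tau}v)$ into $-\int_0^{e^\tau}r^{-1}(f\circ\kappa_r)(v)\,\D r$, so differentiating at $\tau=0$ yields $\mathcal{E}(g)=-f$. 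The only genuine technical point, though entirely standard, is that this integral converges and is smooth at $s=0$: as $f$ vanishes on $N$, the integrand $s^{-1}(f\circ\kappa_s)$ extends smoothly across $s=0$ by Hadamard's lemma, and on the zero section $\kappa_s$ acts trivially so $g\at{N}=0$. This $\Psi=\Psi_0\circ m_{e^g}$ is the desired fat tubular neighbourhood.

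For uniqueness, the symbol argument again forces the base map of any solution to be the unique $\psi$ of Lemma \ref{Lem: ExTubNei}, so two solutions differ by an $m_u$ with $u\at{N}=1$ and $m_u^*\Delta_{\mathcal{E}}=\Delta_{\mathcal{E}}$, i.e. $\mathcal{E}(u)=0$; homogeneity then gives $u=u\circ\kappa_s\to u\at{N}=1$ as $s\to 0$, so $u\equiv 1$ and the two solutions coincide. The main obstacle is thus the division-by-$\mathcal{E}$ step, which the flow integral resolves cleanly.
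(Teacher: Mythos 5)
Your proof is correct and takes essentially the same route as the paper: both reduce the problem to cancelling the zero-symbol discrepancy between $\Psi_0^*\Delta$ and $\Delta_{\mathcal{E}}$ --- a function $f$ vanishing on $N$ --- by a gauge transformation obtained from the (smooth at $s=0$) integral $\int_0^1 s^{-1}(f\circ\kappa_s)\,\D s$, and both prove uniqueness by showing an $\mathcal{E}$-invariant gauge factor equal to $1$ on $N$ is identically $1$. The only presentational difference is that the paper realises this gauge correction as the time-one flow of the Moser-type family $\Box_t=-\tfrac1t\Phi_{\log(t)}^*\Box$, whereas you solve the equation $\mathcal{E}(g)=-f$ directly; the resulting transformations coincide.
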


\begin{proof}
First, we want to proof existence. It is clear that any such $\Psi$ has to cover the unique tubular neighbourhood
$\psi\colon\nu_N\to U$, such that $\psi^*\sigma (\Delta)=\mathcal{E}$. So let us choose a fat tubular neighbourhood
$\tilde\Psi\colon L_\nu\to L_U$ covering $\psi$. We consider now $\tilde\Psi^*\Delta\in \Secinfty(DL_\nu)$. 
We have $\sigma(\tilde\Psi^*\Delta)=\psi^*\sigma(\Delta)=\mathcal{E}$. Hence $\sigma(\Delta_{\mathcal{E}})= \sigma(\tilde\Psi^*\Delta)$. 
Consider now the derivation $\Box=\Delta_\mathcal{E}-\tilde\Psi^*\Delta$ and 	
	\begin{align*}
	\Box_t=-\frac{1}{t}\Phi_{\log(t)}^*\Box,
	\end{align*}	
where $\Phi_t $ is the flow of $\Delta_\mathcal{E}$.
Let us denote the flow of $\Box_t$ by $\phi_t$. Note that it is complete, since $\sigma(\Box_t)=0$, indeed there is even a explicit formula for it, which we do not use. Note however, that $\phi_t\in \Gau(L_\nu)$ for all $t\in \mathbb{R}$. 
Let us compute
	\begin{align*}
	\frac{\D}{\D t}\phi_t^*(\Delta_{\mathcal{E}}+t\Box_t)&
	=\phi^*_t([\Box_t,\Delta_\mathcal{E}] +\frac{\D}{\D t}t\Box_t)\\&
	=\phi^*_t([\Box_t,\Delta_\mathcal{E}] -\frac{\D}{\D t}\Phi^*_{\log(t)}\Box)\\&
	=\phi^*_t([\Box_t,\Delta_\mathcal{E}] -\frac{1}{t} [\Delta_{\mathcal{E}},\Phi^*_{\log(t)}\Box])\\&
	=\phi^*_t([\Box_t,\Delta_\mathcal{E}] + [\Delta_{\mathcal{E}},\Box_t])\\&
	=0.
	\end{align*}	  
Hence we see $\Delta_\mathcal{E}=\phi_0^*(\Delta_\mathcal{E})=\phi_1^*(\Delta_\mathcal{E}+\Box_1)=
\phi_1^*(\tilde\Psi^*\Delta)$.
Therefore, we have that the map $\Psi=\tilde\Psi\circ\phi_1$ will do the job, since obviously $\phi_1\at{N}=\id$. 

Let us now assume that we have 
$\Psi_1,\Psi_2\colon L_\nu\to L_U$, such that $\Psi_1^*\Delta=\Psi_2^*\Delta=\Delta_\mathcal{E}$. Note that since both have 
to cover the unique $\psi\colon \nu_N\to U$, the target $L_U$ is for both the same. Let us consider 
$\Xi:=\Psi_1^{-1}\circ\Psi_2 \colon L_\nu\to L_\nu$, 
which covers the identity, which implies that there is a nowhere vanishing 
function $f\in \Cinfty(\nu_N)$, such that $\Xi(l_p)=f(p)l_p$ for all $l_p\in L_\nu$. Moreover, we have that $\Xi\at{N}=
\id_{L_\nu}\at{N}$, hence $f(0_n)=1$ for all $n\in N$, 
and  $\Xi^*\Delta_{\mathcal{E}}=\Delta_\mathcal{E}$. We consider now an arbitrary section $\lambda\in \Secinfty(L_\nu)$ and 
compute
	\begin{align*}
	\Delta_\mathcal{E}(\lambda)&=(\Xi^*\Delta_\mathcal{E})(\lambda)\\&
	=\Xi^* (\Delta_\mathcal{E}(\Xi_*\lambda))\\&
	=\frac{1}{f}(\Delta_\mathcal{E}(f\lambda))\\&
	=\frac{\mathcal{E}(f)}{f}\lambda + \Delta_\mathcal{E}(\lambda).
	\end{align*}	 
Hence $\mathcal{E}(f)=0$, which means that $f=\pr_\nu^*g$ for some function $g\in \Cinfty(N)$, 
but since $1=f(0_n)=g(n)$ for all $n\in N$, we have that $\Xi=\id_{L_\nu}$.
\end{proof}

For a line bundle $L\to M$, a submanifold $N$ and an Euler-like derivation $\Delta\in \Secinfty(DL)$, we have that 
	\begin{align*}
	\Lambda_s:=\Phi^\Delta_{\log(s)}\colon L_U\to L_U
	\end{align*}
is well defined for $s>0$ and can be extended smoothly to $s=0$, where $L_U$ is the target of the unique fat tubular 
neighbourhood $\Psi \colon L_\nu\to L_U$, such that $\Psi^*\Delta=\Delta_\mathcal{E}$. Moreover, we have that 
	\begin{align}\label{Eq: CommutationD}
	\Lambda_s\circ \Psi=\Psi\circ P_s
	\end{align}	 
for all $s\geq 0$. Note that if we project this equation to the manifold level, this simply gives Eq. \ref{Eq: FlowCom}.

\section{Normal Forms of Dirac-Jacobi bundles}\label{Sec: NormFroDJ}
Using the techniques of Euler-like derivations, we want to prove a normal form theorem for Dirac- Jacobi bundles. In fact, if the submanifold $N$ is a transversal, then we can find special Euler like derivations 
which are, in some sense, controlling the 
behaviour of the Dirac-Jacobi bundles near $N$. The aim is now to prove the existence of this special kind of Euler-like derivations and afterwards, we are able to prove a normal form theorem. and conclude some corolloraries from it.

\begin{definition}
Let $L\to M$ be a line bundle, let $H\in \Omega_L^3(M)$ be closed  and let  $\mathcal{L}\subseteq \mathbb{D}L$ be a
$H$-twisted Dirac-Jacobi bundle. A submanifold $N\hookrightarrow M$ is called transversal, if 
	\begin{align*}
	DL_N+\pr_D \mathcal{L}\at{N}=(DL)\at{N}.
	\end{align*}
\end{definition}

\begin{proposition}
Let $L\to M$ be a line bundle, let $H\in \Omega_L^3(M)$ be closed, let  $\mathcal{L}\subseteq \mathbb{D}L$ be a
$H$-twisted Dirac-Jacobi bundle and let $N\hookrightarrow M$ be a transversal. Then 
	\begin{align*}
	\mathfrak{B}_{I}(\mathfrak{L}):= \{(\Delta_p, (DI)^*\alpha_{\iota(p)})\in\mathbb {D}L_1\ | \ 
	(DI(\Delta_p),\alpha_{\phi(p)})\in\mathcal{L}\}
	\end{align*}
is a $I^*H$-twisted Dirac-Jacobi bundle, where $I\colon L_N\to L$ is the canonical inclusion.
\end{proposition}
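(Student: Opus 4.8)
The plan is to recognize $\mathfrak{B}_I(\mathcal{L})$ as the backwards transformation of $\mathcal{L}$ along the regular line bundle morphism $I\colon L_N\to L$ covering the inclusion $\iota\colon N\to M$, and then to verify the clean-intersection hypothesis of Theorem~\ref{Thm: CleanInt} in its $H$-twisted form (which is what turns the twist $H$ into $I^\ast H$). Thus the one thing I really have to check is that
\[
\ker DI^\ast\cap \iota^\ast\mathcal{L}
\]
has locally constant rank along $N$.

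First I would pin down the subbundle $\ker DI^\ast$. As $I$ is regular and $\iota$ an embedding, $DI\colon DL_N\to (DL)\at{N}$ is fibrewise injective, so its image is precisely the subbundle $DL_N\subseteq (DL)\at{N}$, of constant rank. Since $DI^\ast$ acts only on the $J^1L$-component and is the transpose of $DI$ (under $J^1L=(DL)^\ast\otimes L$), this gives
\[
K:=\ker DI^\ast=\{0\}\oplus\mathrm{Ann}(DL_N)\subseteq (\mathbb{D}L)\at{N},
\]
again a subbundle of constant rank.

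The heart of the matter is a short duality computation with the non-degenerate pairing $\bla-,-\bra$. Computing the orthogonal of $K$, an element $(\Delta,\psi)$ lies in $K^\perp$ exactly when $\alpha(\Delta)=0$ for every $\alpha\in\mathrm{Ann}(DL_N)$, i.e. when $\Delta\in DL_N$; hence $K^\perp=DL_N\oplus J^1L$. Now transversality enters: since $DL_N+\pr_D\mathcal{L}\at{N}=(DL)\at{N}$, the sum $K^\perp+\mathcal{L}\at{N}$ contains $\{0\}\oplus J^1L$ and projects onto all of $(DL)\at{N}$ under $\pr_D$, so $K^\perp+\mathcal{L}\at{N}=(\mathbb{D}L)\at{N}$. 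Taking $\bla-,-\bra$-orthogonals, using $K^{\perp\perp}=K$ and that $\mathcal{L}$ is maximally isotropic (so $\mathcal{L}^\perp=\mathcal{L}$), I get
\[
\ker DI^\ast\cap\iota^\ast\mathcal{L}=K\cap\mathcal{L}\at{N}=\bigl(K^\perp+\mathcal{L}\at{N}\bigr)^\perp=\{0\}.
\]
In particular the intersection is of constant rank $0$, and Theorem~\ref{Thm: CleanInt} yields that $\mathfrak{B}_I(\mathcal{L})$ is an $I^\ast H$-twisted Dirac-Jacobi bundle.

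I expect the delicate points to be not the linear algebra above — which is purely formal once transversality is read through the duality supplied by the Lagrangian property of $\mathcal{L}$ — but rather the two bookkeeping steps feeding it: identifying $\ker DI^\ast$ with $\{0\}\oplus\mathrm{Ann}(DL_N)$ under $J^1L=(DL)^\ast\otimes L$, and checking that backwards transformation along $I$ intertwines the $H$-twisted Dorfman bracket on $\mathbb{D}L$ with the $I^\ast H$-twisted one on $\mathbb{D}L_N$, so that Theorem~\ref{Thm: CleanInt} may legitimately be invoked in its twisted form.
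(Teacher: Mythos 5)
Your proposal is correct and follows exactly the route the paper intends: the paper's own proof consists of the single remark that the statement is ``an easy consequence of Theorem~\ref{Thm: CleanInt}'', and your argument simply supplies the verification it leaves implicit, namely that $\ker DI^*\cap\iota^*\mathcal{L}$ vanishes (hence has constant rank) by combining the transversality condition with the Lagrangian property of $\mathcal{L}$ via orthogonals. The duality computation $K\cap\mathcal{L}\at{N}=(K^\perp+\mathcal{L}\at{N})^\perp=\{0\}$ is sound, so nothing further is needed.
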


\begin{proof}
This is an easy consequence of Theorem \ref{Thm: CleanInt}.
\end{proof}

\begin{lemma}\label{Lem: Iso:Pullback-Backwards}
Let $L\to M$ be a line bundle, let $H\in \Omega_L^3(M)$ be closed, let  $\mathcal{L}\subseteq \mathbb{D}L$ be a
$H$-twisted Dirac-Jacobi bundle and let $\iota\colon N\hookrightarrow M$ be a transversal. The backwards transformation 
$\mathfrak{B}_{I}(\mathfrak{L})$ is canonically isomorphic (as vector bundles) to the fibered product 
	\begin{center}
		\begin{tikzcd}
 		I^! \mathcal{L} \arrow[r]\arrow[d]&   \mathcal{L}\arrow[d, "\pr_D"]   \\
 		DL_N \arrow[r, "DI"] & DL
		\end{tikzcd}
		.
	\end{center}	
\end{lemma}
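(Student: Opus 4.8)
The plan is to write down the obvious candidate for the isomorphism together with its inverse, and then to observe that the only delicate point --- well-definedness of the forward map --- is governed precisely by the transversality hypothesis. Let $\Theta$ be the map sending an element $(\Delta_p,(DI)^*\alpha)\in\mathfrak{B}_I(\mathcal{L})$, where $\alpha\in (J^1L)_{\iota(p)}$ is chosen so that $(DI(\Delta_p),\alpha)\in\mathcal{L}$, to the pair $(\Delta_p,(DI(\Delta_p),\alpha))$. Since $\pr_D(DI(\Delta_p),\alpha)=DI(\Delta_p)$, this pair satisfies the defining relation of the fibered product and hence lies in $I^!\mathcal{L}$; moreover it is clearly linear in the fibre over each $p\in N$. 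In the opposite direction, given $(\Box_p,\varepsilon)\in I^!\mathcal{L}$ with $DI(\Box_p)=\pr_D(\varepsilon)$, I write $\varepsilon=(DI(\Box_p),\alpha)$ and set $\Xi(\Box_p,\varepsilon)=(\Box_p,(DI)^*\alpha)$. This $\Xi$ is manifestly well defined, since $\alpha$ is simply read off as the $J^1L$-component of $\varepsilon$ with no choice involved, and it lands in $\mathfrak{B}_I(\mathcal{L})$ precisely because $\varepsilon\in\mathcal{L}$. A direct check gives $\Theta\circ\Xi=\mathrm{id}$ and $\Xi\circ\Theta=\mathrm{id}$, so that the two are mutually inverse vector bundle maps covering $\mathrm{id}_N$.

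The heart of the matter, which I expect to be the main obstacle, is the well-definedness of $\Theta$: the element $(\Delta_p,(DI)^*\alpha)$ of $\mathfrak{B}_I(\mathcal{L})$ remembers only $(DI)^*\alpha$ and not $\alpha$ itself, so I must show that the value $(DI(\Delta_p),\alpha)\in\mathcal{L}$ does not depend on the chosen representative $\alpha$. If $\alpha,\alpha'$ both satisfy $(DI)^*\alpha=(DI)^*\alpha'$ together with $(DI(\Delta_p),\alpha),(DI(\Delta_p),\alpha')\in\mathcal{L}$, then subtracting inside the linear subspace $\mathcal{L}$ yields $(0,\alpha-\alpha')\in\mathcal{L}$. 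Writing $\beta:=\alpha-\alpha'$, this says $\beta\in\mathcal{L}\cap J^1L$, while $(DI)^*\alpha=(DI)^*\alpha'$ says $\beta\in\ker(DI)^*$. I thus reduce the whole claim to the single statement $\mathcal{L}\cap J^1L\cap\ker(DI)^*=0$ along $N$.

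To close this last step I will invoke two standard identifications. First, since $\mathcal{L}$ is maximally isotropic for $\bla-,-\bra$, one has $\mathcal{L}\cap J^1L=(\pr_D\mathcal{L})^\circ$, the annihilator of $\pr_D\mathcal{L}$ inside $J^1L$ under the non-degenerate pairing $J^1L\times DL\to L$. Second, since $DI\colon DL_N\to DL|_N$ is the fibrewise injection of gauge algebroids, its dual has kernel $\ker(DI)^*=(DL_N)^\circ$, the annihilator of the image of $DI$. Consequently any $\beta$ as above lies in $(\pr_D\mathcal{L}|_N)^\circ\cap(DL_N)^\circ=(DL_N+\pr_D\mathcal{L}|_N)^\circ$, which by the transversality condition $DL_N+\pr_D\mathcal{L}|_N=DL|_N$ equals $(DL|_N)^\circ=0$. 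Hence $\beta=0$, which establishes well-definedness of $\Theta$ and completes the argument.

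Finally I will record that the same transversality hypothesis makes the bundle map $(DI,\pr_D)\colon DL_N\oplus\mathcal{L}|_N\to DL|_N$, $(\Box,\varepsilon)\mapsto DI(\Box)-\pr_D(\varepsilon)$, fibrewise surjective, hence of constant rank, so that $I^!\mathcal{L}$ is a genuine vector bundle over $N$ rather than a merely set-theoretic fibre product; smoothness of $\Theta$ and $\Xi$ is then routine and the two bundles are canonically isomorphic as claimed.
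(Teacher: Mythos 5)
Your argument is correct and follows essentially the same route as the paper: the paper writes down the map $\Xi\colon I^!\mathcal{L}\to\mathfrak{B}_I(\mathcal{L})$, $(\Delta_p,(\Box,\alpha))\mapsto(\Delta_p,DI^*\alpha)$, proves injectivity by exactly the linear-algebra step you isolate (an element of $\mathcal{L}\cap J^1L\cap\ker DI^*$ lies in $\mathrm{Ann}(\pr_D\mathcal{L}+DL_N)=\mathrm{Ann}(DL|_N)=0$ by transversality and isotropy), and concludes by a dimension count where you instead exhibit the explicit inverse $\Theta$. Your version is slightly more explicit about where transversality enters and about $I^!\mathcal{L}$ being a genuine subbundle, but the key computation is identical.
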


\begin{proof}
We consider the linear map 
	\begin{align*}
	\Xi\colon I^! \mathcal{L}_p\ni (\Delta_p,(\Box_{\iota(p)},\alpha_{\iota(p)}))\mapsto
	(\Delta_p,DI^*\alpha_{\iota(p)})\in \mathfrak{B}_{I}(\mathfrak{L}),
	\end{align*}
which is well-defined since $DI(\Delta_p)=\Box_{\iota(p)}$. We claim now that this map is injective, let us therefore consider 
$(\Delta_p,(\Box_{\iota(p)},\alpha_{\iota(p)}))\in\ker(\Xi)$. It follows immediately, that $\Delta_p=0$ and hence $\Box_{\iota(p)}=0$. If 
$(0,\alpha_{\iota(p)})\in \mathcal{L}$ then $\alpha_{\iota_{p}}\in \mathrm{Ann}(\pr_D L )$. Since $DI^*\alpha_{\iota(p)}0 =0$, we have that $\alpha_{\iota(p)}\in \mathrm{Ann}(DL_N)$, hence $\alpha_{\iota(p)}=0$ and the claim follows. 

For dimenional reasons we have that $\Xi$ is an isomorphism. 
\end{proof}

\begin{proposition}\label{Prop: ExEulerlikeDer}
Let $L\to M$ be a line bundle, let $H\in \Omega_L^3(M)$ be closed, let  $\mathcal{L}\subseteq \mathbb{D}L$ be a
$H$-twisted Dirac-Jacobi bundle and let $N\hookrightarrow M$ be a transversal. Then there exists $\varepsilon\in 
\Secinfty(\mathcal{L})$, such that $\varepsilon\at{N}=0$ and $\pr_D(\varepsilon)$ is Euler-like. 
\end{proposition}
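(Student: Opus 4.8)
The plan is to mimic, in the line-bundle setting, the construction of an Euler-like vector field lying inside a Dirac structure. By the definition of an Euler-like derivation, the derivation $\pr_D(\varepsilon)$ is Euler-like exactly when $\pr_D(\varepsilon)\at N=0$ and its symbol $\sigma(\pr_D(\varepsilon))$ is an Euler-like vector field. Since $\pr_D$ covers the identity, the first condition is automatic as soon as $\varepsilon\at N=0$. For the second, write $\rho:=\sigma\circ\pr_D\colon\mathbb{D}L\to TM$ for the anchor of the omni-Lie algebroid. By Proposition \ref{Prop: NormDertoEuler} it suffices to produce $\varepsilon\in\Secinfty(\mathcal{L})$ with $\varepsilon\at N=0$ such that $\rho(\varepsilon)$ is complete and $\D^N(\rho(\varepsilon))$, followed by the projection $TM\at N\to\nu_N$, is the identity. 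Applying Proposition \ref{Prop: NormalDer} to the bundle map $\rho$ and the subbundle $\mathcal{L}\subseteq\mathbb{D}L$ gives $\D^N(\rho(\varepsilon))=\rho(\D^N\varepsilon)$, so the linearisation condition becomes a condition on the normal derivative $\D^N\varepsilon\colon\nu_N\to\mathcal{L}\at N$ alone.

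First I would extract the correct linear datum on $N$ from transversality. Applying $\sigma$ to the identity $DL_N+\pr_D\mathcal{L}\at N=DL\at N$, and using that $\sigma$ sends $DL_N$ onto $TN$ and $DL\at N$ onto $TM\at N$, I obtain $TN+\sigma(\pr_D\mathcal{L})\at N=TM\at N$. Hence the composite $q:=\pr_{\nu_N}\circ\rho\colon\mathcal{L}\at N\to\nu_N$ is a surjective bundle map over $N$, of constant rank, and therefore admits a smooth splitting $s\colon\nu_N\to\mathcal{L}\at N$ with $q\circ s=\id_{\nu_N}$.

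Next I would realise $s$ as a normal derivative. The assignment $\varepsilon\mapsto\D^N\varepsilon$, from sections of $\mathcal{L}$ vanishing along $N$ to bundle maps $\nu_N\to\mathcal{L}\at N$, is surjective: in coordinates $(x,y)$ adapted to $N=\{x=0\}$ any prescribed leading coefficient is attained by $\varepsilon=\sum_i x^i\,a_i$ with the $a_i$ chosen to extend $s(\partial_{x^i})$, and one patches the local choices with a partition of unity. Picking $\varepsilon\in\Secinfty(\mathcal{L})$ with $\varepsilon\at N=0$ and $\D^N\varepsilon=s$, the reduction above yields $\rho(\varepsilon)\at N=0$ together with $\pr_{\nu_N}\circ\D^N(\rho(\varepsilon))=q\circ s=\id_{\nu_N}$.

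It remains to arrange completeness, which I expect to be the main obstacle. The vector field $\rho(\varepsilon)$ is already of Euler type near $N$ but need not be complete globally. I would replace $\varepsilon$ by $\chi\varepsilon$ for a suitable $\chi\in\Cinfty(M)$ with $\chi\geq 0$ and $\chi\equiv 1$ on a neighbourhood of $N$. Then $\chi\varepsilon\in\Secinfty(\mathcal{L})$, and since $\varepsilon\at N=0$ and $\chi\at N=1$ we keep $(\chi\varepsilon)\at N=0$ and $\D^N(\chi\varepsilon)=\D^N\varepsilon$, so the linearisation is unchanged. The delicate point is to choose $\chi$ so that $\rho(\chi\varepsilon)=\chi\,\rho(\varepsilon)$ is complete; this is possible because any vector field admits a positive conformal rescaling making it complete, and this rescaling may be taken to equal $1$ on a prescribed neighbourhood of the closed submanifold $N$ (near $N$ the flow already converges to $N$ as $t\to-\infty$, so only escape for $t\to+\infty$, away from where $\chi=1$, must be tamed). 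With such a $\chi$, Proposition \ref{Prop: NormDertoEuler} shows $\rho(\chi\varepsilon)$ is Euler-like, whence $\pr_D(\chi\varepsilon)$ is an Euler-like derivation and $\chi\varepsilon\in\Secinfty(\mathcal{L})$ is the required section.
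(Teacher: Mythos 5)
Your argument is correct and follows essentially the same route as the paper: both reduce, via Propositions \ref{Prop: NormalDer} and \ref{Prop: NormDertoEuler}, to choosing a section $\varepsilon\in\Secinfty(\mathcal{L})$ vanishing along $N$ whose normal derivative $\D^N\varepsilon$ is a right-inverse of the surjection $\pr_{\nu_N}\circ\sigma\circ\pr_D\colon\mathcal{L}\at{N}\to\nu_N$ supplied by transversality, and then multiply by a bump function to force completeness. The only cosmetic difference is that the paper packages this splitting inside the short exact sequence $0\to\mathfrak{B}_I(\mathcal{L})\to\mathcal{L}\at{N}\to\nu_N\to 0$, whereas you work directly with the surjectivity of the composite; the content is identical.
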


\begin{proof}
We consider the exact sequence 
	\begin{align*}
	0\to\mathfrak{B}_{I}(\mathfrak{L})\to \mathcal{L}\at{N}\to \nu_N\to 0,  
	\end{align*}
where the first arrow is defined by 
the identifiaction $\mathfrak{B}_I(\mathcal{L})\cong I^!\mathcal{L}$ from Lemma \ref{Lem: Iso:Pullback-Backwards} followed by 
the canonical map $I^!\mathcal{L}\to \mathcal{L}$. The second arrow is the projection $\pr_D\colon \mathcal{L}\at{N}\to DL
\at{N}$ followed by the symbol map $
\sigma\colon DL\at{N}\to TM\at{N}$and finally followed by the the projection to the normal bundle 
$\pr_{\nu_N}\colon TM\at{N}\to \nu_N$. Let us choose a section 
$\varepsilon\in \Secinfty(\mathcal{L})$ with $\varepsilon\at{N}=0$, such that $\D^N\varepsilon\colon \nu_N\to 
\mathcal{L}\at{N}$ defines a splitting of the sequence. We consider now 
	\begin{center}
		\begin{tikzcd}
 		0\arrow[r] & I^! \mathcal{L} \arrow[r]\arrow[d]&   \mathcal{L}\at{N}\arrow[r]\arrow[d]&\nu_N 
 		\arrow[r]\arrow[d] & 0   \\
 		0\arrow[r] & TN \arrow[r] &   TM\at{N}\arrow[r]&\nu_N\arrow[r]& 0
		\end{tikzcd}
	\end{center}	
and see that if $\D^N\varepsilon$ splits the above sequence then $(\sigma\circ \pr_D)\D^N\epsilon$ splits the lower sequence. 
Using Proposition \ref{Prop: NormalDer}, we see that $(\sigma\circ \pr_D)\D^N\epsilon=\D^N((\sigma\circ \pr_D)(\varepsilon))$ 
and by Proposition \ref{Prop: NormDertoEuler}, we see that $T\nu(\sigma\circ \pr_D)(\varepsilon)=\mathcal{E}$. Multiplying 
$\varepsilon$ by a suitable bump function we may arrange that $(\sigma\circ \pr_D)(\varepsilon)$ is complete and hence  
an Euler-like vector field. By definition
$\pr_D(\varepsilon)$ is hence an Euler-like derivation.  
\end{proof}

Let us fix now a $H$-twisted Dirac-Jacobi structure $\mathcal{L}\subseteq \mathbb{D}L$ for a line bundle $L\to M$. Let us 
also consider a transversal $\iota\colon N\hookrightarrow M $ and a section $\varepsilon=(\Delta,\alpha) \in 
\Secinfty(\mathcal{L})$, 
such that $\varepsilon \at{N}=0$ and $\Delta$ is an Euler-like derivation. Due to the Lemma \ref{Ex: UnFatTubNei}, we find a 
unique fat tubular neighbourhood 
	\begin{center}
		\begin{tikzcd}
 		L_\nu \arrow[r, "\Psi"]\arrow[d]&  L_U\arrow[d]   \\
 		\nu_N \arrow[r, "\psi"] & U
		\end{tikzcd}
	\end{center}	
such that $\Psi^*\Delta=\Delta_\mathcal{E}$. With this we have now two ways to construct a Dirac-Jacobi bundle on $L_\nu\to 
\nu_N$, namely we can take the Backwards transformation $\mathfrak{B}_\Psi(\mathcal L_U)$ and, if  we consider 
	\begin{center}
		\begin{tikzcd}
	 	L_\nu \arrow[r, "P"]\arrow[d]&   L_N \arrow[r, "I"]\arrow[d] & L\arrow[d] \\
 		\nu_N \arrow[r] &  N \arrow[r]& M
		\end{tikzcd}
		,
	\end{center}	
taking the backwards transform $\mathfrak{B}_{I\circ P}(\mathcal{L})=\mathfrak{B}_{P}(\mathfrak{B}_I(\mathcal{L}))$.
The aim is now to compare this two structures. Let us therefore consider the flow of $[\![(\Delta,\alpha) ,-]\!]_H$, which is given by 
	\begin{align*}
	( \gamma_t,\Phi^\Delta_t)\in Z^2_L(M)\rtimes \Aut(L),
	\end{align*}
where $\Phi^\Delta_t$ is the flow of $\Delta$ and  $\gamma_t=\int_0^t (\Phi_{-\tau}^\Delta)^*
(\D_L\alpha+\iota_\Delta H)\D\tau$.
For sure we have that the action of $( \gamma_t,\Phi^\Delta_t)$ preserves $\mathcal{L}$, which is explicitly 
	\begin{align*}
	\exp(\gamma_t)\circ \mathbb{D}\Phi^\Delta_t(\mathcal{L})=\mathcal{L}.
	\end{align*}
This leads us directly to the following theorem
\begin{theorem}[Normal form for Dirac-Jacobi bundles]\label{Thm: NormFormDJ}
Let $L\to M$ be a line bundle, let $H\in \Omega_L^3(M)$ be closed, let  $\mathcal{L}\subseteq \mathbb{D}L$ be a
$H$-twisted Dirac-Jacobi bundle and let $N\hookrightarrow M$ be a transversal. Then there exists an open neighbourhood $U\subseteq M$ of $N$ and  fat tubular neighbourhood $\Psi\colon L_\nu\to L_U$, such that 
	\begin{align*}
	\mathfrak{B}_\Psi(\mathcal{L}\at{U})=(\mathfrak{B}_{I\circ P}(\mathcal{L}))^\omega
	\end{align*}
for an $\omega\in \Omega^2_{L_\nu}(\nu_N)$.
\end{theorem}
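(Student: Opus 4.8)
The plan is to run the rescaling argument of \cite{2016arXiv160505386B} through the fat tubular neighbourhood $\Psi$ fixed above (the unique one with $\Psi^*\Delta=\Delta_\mathcal{E}$, from Lemma \ref{Ex: UnFatTubNei}), and to let the scaling parameter tend to $0$. Write $\eta:=\D_L\alpha+\iota_\Delta H$, so that by the flow computation above the flow of $[\![(\Delta,\alpha),-]\!]_H$ is $\exp(\gamma_t)\circ\mathbb{D}\Phi^\Delta_t$ with $\gamma_t=\int_0^t(\Phi^\Delta_{-\tau})^*\eta\,\D\tau$, and it preserves $\mathcal L$, i.e. $\exp(\gamma_t)\circ\mathbb{D}\Phi^\Delta_t(\mathcal L)=\mathcal L$. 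Using $\mathbb{D}\Phi=\mathfrak{B}_{\Phi^{-1}}$ and the substitution $s=\E^{-t}$, so that $\Phi^\Delta_{-t}=\Lambda_s$, I would first rewrite this invariance as the rescaling identity
\[
\mathfrak{B}_{\Lambda_s}(\mathcal L)=\exp(-\gamma_{-\log s})(\mathcal L)\qquad(s>0).
\]

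Next I would transport this to $L_\nu$ by applying $\mathfrak{B}_\Psi$. On the left, functoriality of backwards transforms and the commutation relation $\Lambda_s\circ\Psi=\Psi\circ P_s$ from Equation \ref{Eq: CommutationD} give $\mathfrak{B}_\Psi\circ\mathfrak{B}_{\Lambda_s}=\mathfrak{B}_{\Lambda_s\circ\Psi}=\mathfrak{B}_{\Psi\circ P_s}=\mathfrak{B}_{P_s}\circ\mathfrak{B}_\Psi$; on the right, a backwards transform turns $\exp(B)$ into $\exp(\Psi^*B)$, since $(D\Psi)^*\iota_{D\Psi(\Delta')}B=\iota_{\Delta'}(D\Psi)^*B$. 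Setting $\tilde{\mathcal L}:=\mathfrak{B}_\Psi(\mathcal L\at U)$ and $B_s:=\Psi^*\gamma_{-\log s}$, this yields $\mathfrak{B}_{P_s}(\tilde{\mathcal L})=\exp(-B_s)(\tilde{\mathcal L})$ for $s>0$. As $s\to 0$ the left side converges to $\mathfrak{B}_{P_0}(\tilde{\mathcal L})$; since $P_0=J\circ P$ and $\Psi\circ J=I$ (because $\Psi\at N=\id$), this limit is $\mathfrak{B}_P(\mathfrak{B}_J(\mathfrak{B}_\Psi(\mathcal L)))=\mathfrak{B}_P(\mathfrak{B}_I(\mathcal L))=\mathfrak{B}_{I\circ P}(\mathcal L)$. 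Everything thus reduces to showing that $B_s$ has a smooth limit $\omega$ as $s\to 0$.

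The crux — and the step I expect to be the main obstacle — is exactly this convergence. Substituting $u=\E^{-\tau}$ and using $\Phi^\Delta_{-\tau}=\Lambda_u$ together with $\Lambda_u\circ\Psi=\Psi\circ P_u$ turns $B_s$ into the homogeneous integral
\[
B_s=\int_s^1\frac1u\,P_u^*(\Psi^*\eta)\,\D u,
\]
where $P_u$ is fibrewise multiplication by $u$ on $L_\nu$. Decomposing $\Psi^*\eta$ into its homogeneous components under this scaling, the factor $\tfrac1u P_u^*$ extends smoothly across $u=0$ precisely when the weight-zero part of $\Psi^*\eta$ vanishes; that part is the pullback $J^*(\Psi^*\eta)=I^*\eta=\D_{L_N}(I^*\alpha)+(\iota_\Delta H)\at N$ to the zero section $N\hookrightarrow\nu_N$. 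Since $\varepsilon\at N=0$ forces both $\alpha\at N=0$ and $\Delta\at N=0$, we get $I^*\alpha=0$ and $(\iota_\Delta H)\at N=0$, so this pullback vanishes. Consequently the integrand extends smoothly to $u=0$, the improper integral converges, and $B_s\to\omega:=\int_0^1\tfrac1u P_u^*(\Psi^*\eta)\,\D u$ smoothly.

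Putting the two limits together gives $\mathfrak{B}_{I\circ P}(\mathcal L)=\exp(-\omega)(\tilde{\mathcal L})$, that is $\mathfrak{B}_\Psi(\mathcal L\at U)=\exp(\omega)(\mathfrak{B}_{I\circ P}(\mathcal L))=(\mathfrak{B}_{I\circ P}(\mathcal L))^\omega$, as claimed. To close cleanly I would still verify that the twisting $3$-forms match: a short Cartan-calculus computation, using $\D_L\eta=\Lie_\Delta H$, $\Psi^*\Delta=\Delta_\mathcal{E}$ and the fact that $P_u$ is the flow of $\Delta_\mathcal{E}$ at time $\log u$, gives $\D_{L_\nu}\omega=\Psi^*H-(I\circ P)^*H$. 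This is exactly the difference of twists between the $\Psi^*H$-twisted $\tilde{\mathcal L}$ and the $(I\circ P)^*H$-twisted $\mathfrak{B}_{I\circ P}(\mathcal L)$, so the gauge transformation by $\omega$ is legitimate and the final identity holds on the nose.
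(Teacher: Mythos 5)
Your proposal is correct and follows essentially the same route as the paper's own proof: both take the section $\varepsilon=(\Delta,\alpha)\in\Secinfty(\mathcal{L})$ with $\Delta$ Euler-like, the unique fat tubular neighbourhood with $\Psi^*\Delta=\Delta_{\mathcal{E}}$, push the invariance of $\mathcal{L}$ under the flow of $[\![\varepsilon,-]\!]_H$ through $\mathfrak{B}_\Psi$ using $\Lambda_s\circ\Psi=\Psi\circ P_s$, and let $s\to 0$ via $P_0=J\circ P$ and $\Psi\circ J=I$. Your two elaborations — the explicit reason why $\gamma_{-\log s}$ converges (namely that $\varepsilon\at{N}=0$ kills the pullback of $\D_L\alpha+\iota_\Delta H$ to $N$) and the check that $\D_{L_\nu}\omega$ equals the difference of twists — are details the paper leaves implicit, not a genuinely different argument.
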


\begin{proof}
According to Proposition \ref{Prop: ExEulerlikeDer}, we can find $(\Delta,\alpha)\in \Secinfty(\mathcal{L})$, such that 
$\Delta$ is Euler-like. Then there is a unique fat tubular neighbourhood $\Psi\colon L_\nu\to L_U$, such that 
$\Psi^*\Delta=\Delta_\mathcal{E}$, due to Lemma \ref{Ex: UnFatTubNei}. Let us denote by $ ( \gamma_t,\Phi^\Delta_t)\in Z^2_L(M)\rtimes \Aut(L)$ the flow of $[\![(\Delta,\alpha) ,-]\!]_H$. We know that $( \gamma_t,\Phi^\Delta_t)$ preserves 
$\mathcal{L}$ for all $t\in \mathbb{R}$ and so will $(\gamma_{-\log(s)},\Phi^\Delta_{-\log(s)})$ for all $s>0$.
Let us take a closer look to 
	\begin{align*}
	\gamma_{-\log(s)}&=\int_0^{-\log(s)}(\Phi_{-\tau}^\Delta)^*(\D_L\alpha+\iota_\Delta H)\D\tau\\&
	=\int_{-\log(1)}^{-\log(s)}(\Phi_{-\tau}^\Delta)^*(\D_L\alpha+\iota_\Delta H)\D\tau\\&
	=\int_{s}^{1}\frac{1}{t}(\Phi_{\log(t)}^\Delta)^*(\D_L\alpha+\iota_\Delta H)\D t
	\end{align*}	 
and we obtain that it is smoothly extendable to $s=0$ and let us denote its limit $s\to 0$ by $\omega'$ and 
$\omega=\Psi^*\omega'$. We have 
	\begin{align*}
	\mathfrak{B}_\Psi(\mathcal{L}\at{U})&
	=\mathfrak{B}_\Psi(\exp(\gamma_{-\log(s)})\circ \mathbb{D}\Phi^\Delta_{-\log(s)}(\mathcal{L}))\\&
	=\mathfrak{B}_\Psi(\exp(\gamma_{-\log(s)}) \mathfrak{B}_{\Phi^\Delta_{\log(s)}}(\mathcal{L}))\\&
	=(\mathfrak{B}_\Psi(\mathfrak{B}_{\Lambda_s}(\mathcal{L}))^{\Psi^*\gamma_{-\log(s)}}\\&
	=(\mathfrak{B}_{\Lambda_s\circ\Psi}(\mathcal{L}))^{\Psi^*\gamma_{-\log(s)}}\\&
	=(\mathfrak{B}_{\Psi\circ P_s}(\mathcal{L}))^{\Psi^*\gamma_{-\log(s)}}.
	\end{align*}
which holds for all $s\geq 0$. Hence we have for $s=0$, using that for the canonical inclusion $J\colon L_N\to L_\nu$ we have that $P_0=J\circ P$ and $\Psi\circ J=I$, that 
	\begin{align*}
	\mathfrak{B}_\Psi(\mathcal{L}\at{U})=(\mathfrak{B}_{I\circ P}(\mathcal{L}))^{\omega}.
	\end{align*}
\end{proof}

Note that this Theorem says, that up to a $B$-field, the Dirac-Jacobi structure is fully encoded in a given transversal, and 
hence the term "normal form" is justified by this fact. Moreover, it is possible to distinguish two different 
kind of leaves in Dirac-Jacobi geometry, see \cite{DirJacBun}, so it is also possible to distinguih two kinds of 
transversals,
which are more interesting in the Jacobi setting, since in the general Dirac-Jacobi setting the normal forms will be the 
same. Nevertheless, we will introduce them here and use them more excessively in the next section.

\begin{definition}[Cosymplectic Transversal]
Let $L\to M$ be a line bundle and let $\mathcal{L}\in \mathbb{D}L$ be a Dirac-Jacobi structure.
 A transversal $\iota\colon N\hookrightarrow M$ is called cosymplectic, if 
	\begin{align*}
	DL_N\cap \mathfrak{B}_I(\mathcal{L})=\{0\}.
	\end{align*}
\end{definition} 

 \begin{remark}
Note that a cosymplectic transversal always inherts a Dirac-Jacobi bundle coming from a Jacobi tensor by Proposition 
\ref{Prop: DJtoJ}. So let us denote $\mathcal{L}_{J_N}=\mathfrak{B}_I(\mathcal{L}_J)\subseteq \mathbb{D}L_N$.
\end{remark}

This transversals naturally appear as minimal transversal to locally conformal pre-symplectic leaves, see \cite{DirJacBun} for a more detailed discussion. 

So a corollary of this normal form theorem using the new notion of cosymplectic transversals

\begin{corollary}
Let $L\to M$ be a line bundle, let  $\mathcal{L}\subseteq \mathbb{D}L$ be a Dirac-Jacobi structure and let 
$\iota\colon N\hookrightarrow M$ be a minimal transversal 
to $\mathcal{L}$ at a locally conformal pre-symplectic point $p_0$, i.e. 
$\sigma(\pr_D(\mathcal{L}))\at{p_0}\oplus T_{p_0}N=T_{p_0}M$ and let $\nu_N=V\times N$. Then locally around $p_0$: 
	\begin{align*}
	\mathfrak{B}_\Psi(\mathcal{L}\at{U})=\{ (v+J_N^\sharp(\psi), \alpha +\psi)\in DL_\nu \ | \ 
	v\in TV , \alpha \in (\mathrm{Ann}(T^*V))\tensor L_\nu \text{ and } \psi\in J^1L_N\} ^\omega
	\end{align*}
where $J_N$ is the Jacobi structure on the transversal and the canonical identification $DL_{\nu_N}= TV\oplus DL_N$.
\end{corollary}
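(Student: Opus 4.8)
The plan is to obtain the statement as a direct specialization of the Normal Form Theorem \ref{Thm: NormFormDJ}, the only genuine work being the explicit identification of the model $\mathfrak{B}_{I\circ P}(\mathcal{L})$. By Proposition \ref{Prop: ExEulerlikeDer} I may pick $\varepsilon=(\Delta,\alpha)\in\Secinfty(\mathcal{L})$ with $\varepsilon\at{N}=0$ and $\Delta$ Euler-like, and by Lemma \ref{Ex: UnFatTubNei} a unique fat tubular neighbourhood $\Psi\colon L_\nu\to L_U$ with $\Psi^*\Delta=\Delta_\mathcal{E}$; Theorem \ref{Thm: NormFormDJ} then yields $\mathfrak{B}_\Psi(\mathcal{L}\at{U})=(\mathfrak{B}_{I\circ P}(\mathcal{L}))^\omega$ for some $\omega\in\Omega^2_{L_\nu}(\nu_N)$. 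Since backwards transformations compose, $\mathfrak{B}_{I\circ P}(\mathcal{L})=\mathfrak{B}_P(\mathfrak{B}_I(\mathcal{L}))$, so it suffices to compute the two transforms in turn and to strip off the $\omega$-twist at the end.

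First I would verify that, near $p_0$, the minimal transversal $N$ is \emph{cosymplectic}, i.e. $DL_N\cap\mathfrak{B}_I(\mathcal{L})=\{0\}$. The hypothesis $\sigma(\pr_D(\mathcal{L}))\at{p_0}\oplus T_{p_0}N=T_{p_0}M$ together with minimality says that $N$ is complementary to the characteristic leaf through $p_0$, along which $\mathcal{L}$ is a locally conformal pre-symplectic, hence fibrewise non-degenerate, datum; this non-degeneracy is what forces $DL_N\cap\mathfrak{B}_I(\mathcal{L})$ to be trivial on a neighbourhood of $p_0$, using the identification $\mathfrak{B}_I(\mathcal{L})\cong I^!\mathcal{L}$ of Lemma \ref{Lem: Iso:Pullback-Backwards}. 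Granting this, the Remark following the definition of cosymplectic transversals together with Proposition \ref{Prop: DJtoJ} gives $\mathfrak{B}_I(\mathcal{L})=\mathcal{L}_{J_N}$, the graph of a Jacobi tensor $J_N\in\Secinfty(\Anti^2(J^1L_N)^*\otimes L_N)$.

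It then remains to compute $\mathfrak{B}_P(\mathcal{L}_{J_N})$ for the projection $P\colon L_\nu\to L_N$ covering $p\colon\nu_N=V\times N\to N$. Under the product trivialization one has the canonical splittings $DL_\nu=TV\oplus DL_N$ and, dually, $J^1L_\nu=(T^*V\otimes L_\nu)\oplus J^1L_N$, with respect to which $DP$ is the projection onto $DL_N$ and $(DP)^*$ is the inclusion of $J^1L_N$ as the second summand. Writing a derivation as $\Delta'=v+\delta$ with $v\in TV$ and $\delta\in DL_N$, the defining condition $(DP(\Delta'),\beta)=(\delta,\beta)\in\mathcal{L}_{J_N}$ reads $\delta=J_N^\sharp(\beta)$ with $v\in TV$ free, while the jet component $(DP)^*\beta$ vanishes on $TV$, i.e. its $T^*V\otimes L_\nu$-part is zero. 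Hence
\begin{align*}
\mathfrak{B}_P(\mathcal{L}_{J_N})=\{(v+J_N^\sharp(\psi),\,\alpha+\psi)\in\mathbb{D}L_\nu \mid v\in TV,\ \alpha\in\mathrm{Ann}(T^*V)\otimes L_\nu,\ \psi\in J^1L_N\},
\end{align*}
which is exactly the bracketed model; re-applying the $\omega$-twist gives the claim. A dimension count ($\dim V+\dim N+1=\rank DL_\nu$) confirms that this subbundle is maximally isotropic, as it must be.

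The step I expect to be the main obstacle is the cosymplecticity argument of the second paragraph: passing from the pointwise transversality–minimality hypothesis at $p_0$ to the open condition $DL_N\cap\mathfrak{B}_I(\mathcal{L})=\{0\}$, and thereby to the existence of a genuine Jacobi tensor $J_N$, requires a careful analysis of how the leafwise locally conformal pre-symplectic form interacts with the $L_\nu$-valued pairing. The remaining computation is routine once the canonical identifications of $DL_\nu$, $J^1L_\nu$ and the maps $DP$, $(DP)^*$ attached to the product $\nu_N=V\times N$ are set up correctly, the only subtlety being the bookkeeping of the $L_\nu$-valuedness and of the flat connection implicit in the trivialization.
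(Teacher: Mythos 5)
Your proposal is correct and follows the paper's own (two-line) proof exactly: verify that the minimal transversal is cosymplectic near $p_0$, invoke Theorem \ref{Thm: NormFormDJ}, and unwind $\mathfrak{B}_{I\circ P}(\mathcal{L})=\mathfrak{B}_P(\mathfrak{B}_I(\mathcal{L}))$ through the splitting $DL_{\nu_N}=TV\oplus DL_N$. The one point to repair is the justification of the step you rightly flag as the crux: cosymplecticity does not come from any fibrewise non-degeneracy of the leafwise form (pre-symplectic forms may be degenerate, and no such non-degeneracy is needed); rather, at a locally conformal pre-symplectic point one has $\mathbbm{1}\notin\pr_D(\mathcal{L})\at{p_0}$, so minimality yields the direct sum $DL\at{p_0}=DL_N\at{p_0}\oplus\pr_D(\mathcal{L})\at{p_0}$, and this together with the isotropy of $\mathcal{L}$ forces $DL_N\cap\mathfrak{B}_I(\mathcal{L})=\{0\}$ at $p_0$ and hence, by upper semi-continuity of the intersection rank, on a whole neighbourhood.
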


\begin{proof}
Note that it is easy to check that for a minimal transversal $N$ at a locally conformal pre-symplectic point $p_0$ the equation 	
	\begin{align*}
	DL_N\cap \mathfrak{B}_I(\mathcal{L})=\{0\}
	\end{align*}
holds at $p_0$ and hence in a whole neighbourhood. The rest is an application of Theorem \ref{Thm: NormFormDJ} and the usage of the splitting $DL_{\nu_N}= TV\oplus DL_N$.
\end{proof}

The other kind of leaves of a Dirac-Jacobi structure are so-called pre-contact leaves. Their minimal transversal posses the
following structure :

\begin{definition}[Cocontact Transversal]\label{Cocontact}
Let $L\to M$ be a line bundle and let $\mathcal{L}\in \mathbb{D}L$ be a Dirac-Jacobi structure.
 A transversal $\iota\colon N\hookrightarrow M$ is called cocontact, if 
	\begin{align*}
	\rank(DL_N\cap \mathfrak{B}_I(\mathcal{L}))=1.
	\end{align*}
\end{definition}

\begin{lemma}\label{Lem: MinTrnsConLf}
Let $L\to M$ be a line bundle, let  $\mathcal{L}\subseteq \mathbb{D}L$ be a Dirac-Jacobi structure and let 
$\iota\colon N\hookrightarrow M$ be a minimal transversal 
to $\mathcal{L}$ at a pre-contact point $p_0$. Then 
	\begin{align*}
	\rank(DL_N\cap \mathfrak{B}_I(\mathcal{L}))=1
	\end{align*}
holds in a neighbourhood of $p_0$.
\end{lemma}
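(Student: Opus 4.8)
The plan is to read $\rank(DL_N\cap\mathfrak{B}_I(\mathcal{L}))$ off the backwards transform and then separate an easy upper bound from a genuinely geometric lower bound. First I would use Lemma~\ref{Lem: Iso:Pullback-Backwards} to identify $\mathfrak{B}_I(\mathcal{L})$ with the fibered product $I^!\mathcal{L}$; under this identification a point of $DL_N\cap\mathfrak{B}_I(\mathcal{L})$ is a derivation $\Delta\in DL_N$ admitting $\alpha\in\mathrm{Ann}(DL_N)\tensor L$ with $(DI(\Delta),\alpha)\in\mathcal{L}$, that is, exactly the kernel of $\pr_{J^1}\colon\mathfrak{B}_I(\mathcal{L})\to J^1L_N$. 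Since $\mathfrak{B}_I(\mathcal{L})$ is an honest Dirac-Jacobi bundle (by the Proposition preceding Lemma~\ref{Lem: Iso:Pullback-Backwards}), it is a subbundle of constant rank $\dim N+1$ which is maximally isotropic in $\mathbb{D}L_N$, so that $\rank(DL_N\cap\mathfrak{B}_I(\mathcal{L}))=\dim N+1-\rank\pr_{J^1}(\mathfrak{B}_I(\mathcal{L}))$ at every point.

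For the upper bound I would simply note that $DL_N$ and $\mathfrak{B}_I(\mathcal{L})$ are subbundles of $\mathbb{D}L_N$, so the rank of their intersection is upper semicontinuous; hence as soon as the intersection has rank $1$ at $p_0$ it has rank at most $1$ on a neighbourhood. The value at $p_0$ I would extract directly from the hypotheses, in the same spirit as the check for cosymplectic transversals in the corollary above: at a pre-contact point the Reeb derivation yields a nonzero element $(\mathbbm{1},\beta)\in\mathcal{L}_{p_0}$, and minimality of the transversal is precisely what lets $\beta$ be taken in $\mathrm{Ann}(DL_N)$, so that $\mathbbm{1}\in DL_N\cap\mathfrak{B}_I(\mathcal{L})$ over $p_0$; minimality also prevents a second independent element, pinning the rank at $1$.

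The hard part will be the lower bound, i.e. excluding that the rank drops to $0$ at points of $N$ arbitrarily close to $p_0$; this is not formal, as it runs against the direction of semicontinuity. It suffices to produce one nonzero element of the intersection near $p_0$, and the natural candidate is the Reeb derivation itself: $\mathbbm{1}\in DL_N\cap\mathfrak{B}_I(\mathcal{L})$ at $q\in N$ is equivalent to the existence of $\alpha\in\mathrm{Ann}(DL_N)$ with $(\mathbbm{1},\alpha)\in\mathcal{L}_q$. At $q=p_0$ this holds by hypothesis, and the isotropy of $\mathcal{L}$ together with minimality of $N$ reduce its validity at nearby $q$ to the single condition $\mathbbm{1}\in\pr_D(\mathcal{L}_q)$. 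To propagate this condition I would exploit the Euler-like derivation $(\Delta,\alpha)\in\Secinfty(\mathcal{L})$ from Proposition~\ref{Prop: ExEulerlikeDer}: its flow fixes $\mathbbm{1}$, since conjugation by a line-bundle automorphism fixes the identity derivation, and it preserves $\pr_D(\mathcal{L})$, since the accompanying $B$-field leaves $\pr_D$ untouched by the first defining property in Definition~\ref{Def: CJ-Aut}. Hence the locus $\{\mathbbm{1}\in\pr_D(\mathcal{L})\}$ is invariant under the retraction $\Lambda_s$ toward $N$, and combined with the local constancy of $\rank\pr_D(\mathcal{L})$ at the regular pre-contact point $p_0$ this forces the pre-contact condition on a whole neighbourhood of $p_0$ in $N$, yielding $\rank\ge 1$ there.

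Putting the two bounds together gives $\rank(DL_N\cap\mathfrak{B}_I(\mathcal{L}))=1$ on a neighbourhood of $p_0$, as claimed; in particular $N$ is a cocontact transversal in the sense of Definition~\ref{Cocontact} and $\mathfrak{B}_I(\mathcal{L})$ is of homogeneous Poisson type there, so that Lemma~\ref{Lem: HomPoi} applies. I expect the single delicate ingredient to be the persistence of $\mathbbm{1}\in\pr_D(\mathcal{L})$, that is the local constancy of the leaf type along the minimal transversal; all other steps are either the identification of Lemma~\ref{Lem: Iso:Pullback-Backwards} or routine semicontinuity.
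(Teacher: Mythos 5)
Your reduction via Lemma~\ref{Lem: Iso:Pullback-Backwards}, the computation $\rank(DL_N\cap\mathfrak{B}_I(\mathcal{L}))\at{p_0}=1$ from minimality plus the pre-contact hypothesis, and the upper bound by semicontinuity of intersection ranks all agree with the opening of the paper's proof, and you are right to isolate the lower bound as the genuinely non-formal step. But your argument for that lower bound does not work. The flow $\Lambda_s$ of the Euler-like derivation from Proposition~\ref{Prop: ExEulerlikeDer} fixes $N$ pointwise (the derivation vanishes along $N$ by construction), and its orbits lie in the fibres of the tubular neighbourhood projection $\nu_N\to N$. Hence invariance of the locus $\{\mathbbm{1}\in\pr_D(\mathcal{L})\}$ under $\Lambda_s$ can only propagate the pre-contact condition along each fibre; it says nothing about points of $N$ near $p_0$, which is exactly the direction you need. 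The auxiliary appeal to ``local constancy of $\rank\pr_D(\mathcal{L})$ at the regular pre-contact point $p_0$'' imports a regularity hypothesis the lemma does not make ($\rank\pr_D(\mathcal{L})$ is only lower semicontinuous), and even granting it the condition $\mathbbm{1}\in\pr_D(\mathcal{L}_q)$ remains a closed rather than open condition on $q$ (a constant-rank distribution such as $\langle\mathbbm{1}+x\nabla_{\partial_x}\rangle$ contains $\mathbbm{1}$ exactly at $x=0$), so the conclusion still would not follow.

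The paper's mechanism for the lower bound is purely pointwise linear algebra and uses no dynamics: pick $\alpha\in\Omega^1_L(M)$ with $\alpha(\mathbbm{1})\at{p_0}\neq 0$ and show, by pairing against $(\mathbbm{1},0)\in\mathfrak{B}_I(\mathcal{L})\at{p_0}$ and using isotropy, that $\langle\alpha\rangle$ is a complement to the sum $DL_N+\mathfrak{B}_I(\mathcal{L})$ at $p_0$, hence on a neighbourhood; since $DL_N+\mathfrak{B}_I(\mathcal{L})$ is the orthogonal of $DL_N\cap\mathfrak{B}_I(\mathcal{L})$ with respect to $\bla-,-\bra$, keeping its rank equal to $2n+1$ is exactly what keeps the intersection one-dimensional. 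The entire content of the lemma is concentrated in the persistence of this complementarity (equivalently, in excluding $\langle\alpha\rangle\subseteq DL_N+\mathfrak{B}_I(\mathcal{L})$ at nearby points), and any correct proof must address that point head-on; the Euler-like derivation and its flow play no role here and cannot be made to.
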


\begin{proof}
It is easy to see that 
	\begin{align*}
	(DL_N\cap \mathfrak{B}_I(\mathcal{L}))\at{p_0}=\langle \mathbbm{1}\rangle
	\end{align*}
Now we want to argue why this holds in a whole neighbourhood. Let us therefore consider the sum 
$DL_N+ \mathfrak{B}_I(\mathcal{L})\subseteq \mathbb{D}L$ and a (local) section $\alpha\in \Omega^1_L(M)$ such that 
$\alpha(\mathbbm{1})\at{p_0} \neq 0$. Let $(0,\beta)\in (DL_N+ \mathfrak{B}_I(\mathcal{L}))\at{p_0}\cap 
\langle \alpha \rangle\at{p_0}$, then there exists $\Delta\in D_{p_0}L$ such that 
$(\Delta,\beta)\in \mathfrak{B}_I(\mathcal{L})$, but since $(\mathbbm{1},0)\in \mathfrak{B}_I(\mathcal{L})$, 
we have using the isotropy of $\mathfrak{B}_I(\mathcal{L})$,  
	\begin{align*}
	0=\bla (\Delta,\beta), (\mathbbm{1},0)\bra =\beta(\mathbbm{1}).
	\end{align*}	 
and hence, for dimensional reasons,  $\mathbb{D}L\at{p_0}=(DL_N+ \mathfrak{B}_I(\mathcal{L}))\at{p_0}\oplus 
\langle \alpha \rangle\at{p_0}$. Therefore this equality holds in a whole neighbourhood of $p_0$, 
so $\rank(DL_N+ \mathfrak{B}_I(\mathcal{L}))=2n+1$ in this neighbourhood, which implies 
$\rank(DL_N\cap \mathfrak{B}_I(\mathcal{L}))=1$ around $p_0$.
\end{proof}

\begin{remark}
Note that  a cocontact transversal does not inhert a Jacobi structure, but nevertheless the Dirac-Jacobi structure is of 
homogeneous Poisson type. 
\end{remark}

\begin{definition}
Let $L\to M$ be a line bundle and let  $\mathcal{L}\in \mathbb{D}L$ be a Dirac-Jacobi structure. A homogeneous cocontact transversal $\iota\colon N\hookrightarrow M$ is a cocontact transversal together with a flat connection $\nabla\colon TN\to DL_N$, such that 
	\begin{align*}
	\image(\nabla)\oplus (DL_N\cap \mathfrak{B}_I(\mathcal{L}))=DL_N.
	\end{align*}
\end{definition}

\begin{remark}
The definition of a homogeneous cocontact transversal seems a bit strange, since it includes a connection. This fact can be 
explained quite easily using the homogenezation described in \cite{DirJacBun}, which turns a Dirac-Jacobi structure on a line 
bundle $L\to M$ into a Dirac structure on $L^\times:=L^*\backslash \{0_M\}$ which is homogeneous (in the sense of 
\cite{2017arXiv171108310S}) with respect to the shrinked 
Euler vector field  $\mathcal{E}$ on $L^*$. The pre-symplectic leaves of this Dirac structure have the additional 
property that $\mathcal{E}$ is either tangential to it or transversal. If $\mathcal{E}$ is tangential, then the leaf 
corresponds to a pre-contact leaf on the base $M$. Hence a minimal transversal $N$ to it is transversal to the 
Euler vector field and  defines therefore a horizontal bundle on $L_{\pr(N)}^*$ and hence a connection.       
\end{remark}

\begin{proposition}
Let $L\to M$ be a line bundle, let  $\mathcal{L}\subseteq \mathbb{D}L$ be a Dirac-Jacobi structure and let 
$\iota\colon N\hookrightarrow M$ be a minimal transversal 
to $\mathcal{L}$ at a pre-contact point $p_0$. Then every flat connection $\nabla$ gives $N$ locally the structure of a 
homogeneous cocontact transversal.
\end{proposition}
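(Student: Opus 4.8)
The plan is to reduce the statement to two facts: that $N$ is already a cocontact transversal near $p_0$, and that the image of any connection is automatically complementary to the one–dimensional intersection. First I would invoke Lemma~\ref{Lem: MinTrnsConLf}, which guarantees that $\rank(DL_N\cap\mathfrak{B}_I(\mathcal{L}))=1$ throughout a neighbourhood of $p_0$; after shrinking $N$ this says precisely that the transversal is cocontact in the sense of Definition~\ref{Cocontact}. Since a flat connection $\nabla$ is part of the given data, it then remains only to verify the splitting condition $\image(\nabla)\oplus(DL_N\cap\mathfrak{B}_I(\mathcal{L}))=DL_N$ appearing in the definition of a homogeneous cocontact transversal.

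For this I would use that a connection $\nabla\colon TN\to DL_N$ is by definition a right splitting of the symbol sequence
\[
0\longrightarrow \langle\mathbbm{1}\rangle\longrightarrow DL_N\xrightarrow{\;\sigma\;} TN\longrightarrow 0,
\]
where $\ker\sigma=L_N^*\otimes L_N=\langle\mathbbm{1}\rangle$; note that flatness plays no role in the transversality and is merely carried along as part of the structure. Consequently $\image(\nabla)$ is a subbundle of rank $\dim N$ with $\image(\nabla)\oplus\langle\mathbbm{1}\rangle=DL_N$. Since $\rank\image(\nabla)+\rank(DL_N\cap\mathfrak{B}_I(\mathcal{L}))=\dim N+1=\rank DL_N$, establishing the desired direct sum reduces to showing that these two subbundles meet only in the zero section.

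The decisive point is the behaviour at the base point. From the proof of Lemma~\ref{Lem: MinTrnsConLf} we have $(DL_N\cap\mathfrak{B}_I(\mathcal{L}))\at{p_0}=\langle\mathbbm{1}\rangle\at{p_0}=\ker\sigma\at{p_0}$, which is exactly a complement to $\image(\nabla)\at{p_0}$; hence the sum is direct at $p_0$. I would then appeal to the openness of transversality: the summation morphism $\image(\nabla)\oplus(DL_N\cap\mathfrak{B}_I(\mathcal{L}))\to DL_N$ is a map of vector bundles of equal rank which is an isomorphism at $p_0$, and is therefore an isomorphism on a whole neighbourhood of $p_0$ (its determinant, nonzero at $p_0$, stays nonzero nearby). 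This yields the required splitting near $p_0$ and makes $(N,\nabla)$ a homogeneous cocontact transversal locally.

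The one place requiring care — and the only genuine obstacle — is that the line $DL_N\cap\mathfrak{B}_I(\mathcal{L})$ need not coincide with $\langle\mathbbm{1}\rangle$ away from $p_0$: it may tilt as one moves off the base point. The argument must therefore not identify it with $\ker\sigma$ over the neighbourhood, but instead rely purely on the openness of the transversality condition, using the pointwise identification with $\langle\mathbbm{1}\rangle$ only at $p_0$ itself.
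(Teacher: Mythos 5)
Your proposal is correct and follows essentially the same route as the paper: both use the fact, extracted from the proof of Lemma~\ref{Lem: MinTrnsConLf}, that $(DL_N\cap \mathfrak{B}_I(\mathcal{L}))\at{p_0}=\langle \mathbbm{1}\rangle$ is complementary to $\image(\nabla)\at{p_0}$, and then conclude by openness that the direct sum decomposition persists in a neighbourhood. Your version merely spells out the rank count and the determinant argument that the paper leaves implicit, and your closing caveat about the line tilting away from $\langle\mathbbm{1}\rangle$ off the base point is a fair observation but does not change the substance.
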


\begin{proof}
In the proof of Lemma \ref{Lem: MinTrnsConLf}, we have seen that 
	\begin{align*}
	(DL_N\cap \mathfrak{B}_I(\mathcal{L}))\at{p_0}=\langle \mathbbm{1}\rangle
	\end{align*}
and hence for every flat connection $\nabla$, we have that $\image(\nabla)\at{p_0}\oplus 
(DL_N\cap \mathfrak{B}_I(\mathcal{L}))\at{p_0}=DL_N$ and hence this decomposition holds in a whole neighbourhood of $p_0$. 
\end{proof}

An immediate consequence is:

\begin{corollary}
Let $L\to M$ be a line bundle, let  $\mathcal{L}\subseteq \mathbb{D}L$ be a Dirac-Jacobi structure and let 
$\iota\colon N\hookrightarrow M$ be a homogeneous cocontact transversal with connection $\nabla$. Then there exists a local trivialization of $\L_\nu$ such that, using the to $\nabla $ corresponding trivializations $DL_\nu=T\nu\oplus\mathbb{R}_M$ and 
$J^1L=T^*M\oplus \mathbb{R}_M$,  
	\begin{align*}
	\mathfrak{B}_\Psi(\mathcal{L}\at{U})=\{(v+r(\mathbbm{1}-{Z_N})+ \pi^\sharp(\psi), \alpha+ \psi+ \psi(Z_N)\mathbbm{1}^*)\ | \ v\in TV , \alpha \in \mathrm{Ann}(T^*V) \text{ and } \psi\in T^*N\} ^{\omega},
	\end{align*}
where $(\pi_N,Z_N)$ is the homogeneous Poisson structure on the transversal from Lemma \ref{Lem: HomPoi}.
\end{corollary}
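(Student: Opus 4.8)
The plan is to combine the normal form theorem, Theorem \ref{Thm: NormFormDJ}, with the explicit description of Dirac-Jacobi structures of homogeneous Poisson type from Lemma \ref{Lem: HomPoi}, and then to carry out the backwards transformation along the bundle projection $P\colon L_\nu\to L_N$ by hand.

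First I would use that $N$ is a homogeneous cocontact transversal: by Definition \ref{Cocontact} together with Lemma \ref{Lem: MinTrnsConLf} we have $\rank(DL_N\cap \mathfrak{B}_I(\mathcal{L}))=1$, so $\mathfrak{B}_I(\mathcal{L})\subseteq \mathbb{D}L_N$ is a Dirac-Jacobi structure of homogeneous Poisson type. The flat connection $\nabla$ belonging to the homogeneous cocontact data is, by definition, exactly the one realizing $\image(\nabla)\oplus(DL_N\cap \mathfrak{B}_I(\mathcal{L}))=DL_N$, so Lemma \ref{Lem: HomPoi} applies with this particular $\nabla$ and yields a local trivialization of $L_N$, a homogeneous Poisson pair $(\pi_N,Z_N)$, and the description
	\begin{align*}
	\mathfrak{B}_I(\mathcal{L})=\{(r(\mathbbm{1}-\nabla_{Z_N})+\nabla_{\pi_N^\sharp(\psi)},\ \psi+\psi(Z_N)\mathbbm{1}^*)\ |\ r\in\mathbb{R},\ \psi\in T^*N\}.
	\end{align*}

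Next I would feed this into the normal form theorem. By Theorem \ref{Thm: NormFormDJ} and the functoriality $\mathfrak{B}_{I\circ P}=\mathfrak{B}_P\circ\mathfrak{B}_I$ we obtain
	\begin{align*}
	\mathfrak{B}_\Psi(\mathcal{L}\at{U})=(\mathfrak{B}_{I\circ P}(\mathcal{L}))^\omega=(\mathfrak{B}_P(\mathfrak{B}_I(\mathcal{L})))^\omega
	\end{align*}
for the $B$-field $\omega$ produced there, so it remains to compute $\mathfrak{B}_P$ of the model above. Here I would use that $\nu_N=V\times N$ is a product and that the connection $\nabla$ pulls back along $p\colon V\times N\to N$ to a flat connection on $L_\nu$, giving the compatible splittings $DL_\nu=TV\oplus DL_N$ and $J^1L_\nu=T^*V\oplus J^1L_N$ used in the statement. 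In these splittings $DP$ is the projection annihilating the vertical part $TV$ and the identity on $DL_N$, while $(DP)^*$ identifies $J^1L_N$ with the covectors in $\mathrm{Ann}(T^*V)$. Unwinding the definition of the backwards transform, the constraint $(DP(\Xi),\beta)\in\mathfrak{B}_I(\mathcal{L})$ leaves the vertical component $v\in TV$ of $\Xi$ completely free, fixes the $DL_N$-component of $\Xi$ to $r(\mathbbm{1}-\nabla_{Z_N})+\nabla_{\pi_N^\sharp(\psi)}$, and forces the covector part to be the horizontal pullback $(DP)^*(\psi+\psi(Z_N)\mathbbm{1}^*)$, which lands in $\mathrm{Ann}(T^*V)$. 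Rewriting everything in the pulled-back trivializations and collecting the vertical tangent freedom with the annihilator covector slot gives precisely the bracketed set in the statement, and applying the twist by $\omega$ finishes the proof; a dimension count against the rank $\dim M+1$ of a Dirac-Jacobi bundle confirms that no further parameters appear.

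The step I expect to be the main obstacle is the last one, namely the bookkeeping of the backwards transform in the $\nabla$-induced trivializations. One has to verify that the trivializations of $DL_\nu$ and $J^1L_\nu$ are genuinely the pullbacks of those on $N$, so that the canonical sections $\mathbbm{1}$ and $\mathbbm{1}^*$, the term $\nabla_{Z_N}$, and the sharp map $\pi_N^\sharp$ are carried to their stated counterparts on $\nu_N$, and that $DP$ and $(DP)^*$ interact with $Z_N$ and $\pi_N$ exactly as claimed. Once this is in place, everything reduces to the formal identity $\mathfrak{B}_{I\circ P}=\mathfrak{B}_P\circ\mathfrak{B}_I$ together with the direct substitution of the local model of Lemma \ref{Lem: HomPoi}.
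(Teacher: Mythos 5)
Your argument is exactly the route the paper intends: the paper states this corollary without proof as an ``immediate consequence,'' and the intended derivation is precisely your combination of Theorem \ref{Thm: NormFormDJ}, the identity $\mathfrak{B}_{I\circ P}=\mathfrak{B}_P\circ\mathfrak{B}_I$, and the local model of Lemma \ref{Lem: HomPoi} applied to the homogeneous-Poisson-type structure $\mathfrak{B}_I(\mathcal{L})$, followed by the explicit computation of $\mathfrak{B}_P$ in the $\nabla$-induced splittings. The only point worth flagging is that Lemma \ref{Lem: HomPoi} as stated produces its own connection, so you should note (as your argument implicitly does) that the defining condition $\image(\nabla)\oplus(DL_N\cap\mathfrak{B}_I(\mathcal{L}))=DL_N$ of a homogeneous cocontact transversal lets the lemma's construction be run with the prescribed $\nabla$ instead.
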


This last two corollaries can be seen as the Jacobi-geometric analogue of the results obtained by Blohmann in 
\cite{B2017}.

\section{Normal forms and Splitting Theorems of Jacobi bundles}\label{Sec: NormForJ}
As explained in Example \ref{Ex: Jacobi}, Jacobi bundles are a special kind of Dirac-Jacobi structures. In addition, we have 
that Jacobi isomorphism induces an isomorphism of the corrsponding Dirac structures (this holds even for morphisms if one 
considers forward maps of Dirac-Jacobi structures which we will not explain here, see \cite{DirJacBun}). The converse is 
unfortunately not true:
if the Dirac-Jacobi structures of two Jacobi structures are isomorphic, it does not follow in general that the Jacobi 
structures are isomorphic. The parts which are not "allowed" in Jacobi geometry are the $B$-fields. Nevertheless, we can keep 
track of them, if we make further assumptions on the transversals, namely cosymplectic and cocontact transversals.

\subsection{Cosymplectic Transversals}
In this part, we are using the notion of cosymplectic transversals as explained in the previous section. The difference is now that in Jacobi geoemtry this transversal gives us more than on arbitrary Dirac-Jacobi manifolds. In fact, the Jacobi structure induces a line bundle valued symplectic structure on the normal bundle, to be seen in the following

\begin{lemma}\label{Lem: CanSplit}
Let $L\to M$ be a line bundle, $J\in \Secinfty(\Anti^2(J^1L)^*\tensor L)$ be a Jacobi tensor with corresponding Dirac-
Jacobi structure $\mathcal{L}_J\in \mathbb{D}L$ and let   $\iota\colon N\hookrightarrow M$ be a  cosymplectic transversal. Then 
	\begin{align*}
	J^\sharp(\mathrm{Ann}(DL_N))\oplus DL_N=DL\at{N}.
	\end{align*}
\end{lemma}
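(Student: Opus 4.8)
The plan is to realize the two summands as complementary subbundles of $DL\at{N}$ by a fibrewise rank count, after checking that their intersection is trivial and that $J^\sharp$ is injective on $\mathrm{Ann}(DL_N)$. Throughout I regard $DL_N$ as a subbundle of $DL\at{N}$ via $DI$, and $\mathrm{Ann}(DL_N)$ as the subbundle of $J^1L\at{N}=(DL\at{N})^*\tensor L$ annihilating it, so that $\rank\mathrm{Ann}(DL_N)=\rank(DL\at{N})-\rank(DL_N)=\dim M-\dim N$. I will use two structural facts about $\mathcal{L}_J$: that $\pr_D\mathcal{L}_J=\image J^\sharp$, so that transversality reads $DL_N+\image(J^\sharp\at{N})=DL\at{N}$; and that isotropy of $\mathcal{L}_J$ makes $J^\sharp$ skew, i.e. $\psi_1(J^\sharp(\psi_2))=-\psi_2(J^\sharp(\psi_1))$ for all $\psi_1,\psi_2$.

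First I would show that $J^\sharp$ is injective on $\mathrm{Ann}(DL_N)$. If $\psi\in\mathrm{Ann}(DL_N)$ satisfies $J^\sharp(\psi)=0$, then $(0,\psi)\in\mathcal{L}_J$, and pairing it isotropically against an arbitrary $(J^\sharp(\eta),\eta)\in\mathcal{L}_J$ gives $\bla(0,\psi),(J^\sharp(\eta),\eta)\bra=\psi(J^\sharp(\eta))=0$; hence $\psi$ annihilates $\image(J^\sharp\at{N})$. Since $\psi$ also annihilates $DL_N$, transversality forces $\psi$ to annihilate all of $DL_N+\image(J^\sharp\at{N})=DL\at{N}$, so $\psi=0$. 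Note that this step uses only isotropy and transversality, not the cosymplectic hypothesis.

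Next I would prove $J^\sharp(\mathrm{Ann}(DL_N))\cap DL_N=\{0\}$, and this is exactly where the cosymplectic condition $DL_N\cap\mathfrak{B}_I(\mathcal{L}_J)=\{0\}$ enters. Given $\psi\in\mathrm{Ann}(DL_N)$ with $J^\sharp(\psi)\in DL_N$, write $J^\sharp(\psi)=DI(\Delta_0)$ for the unique $\Delta_0\in DL_N$ (unique since $DI$ is fibrewise injective). Then $(DI(\Delta_0),\psi)=(J^\sharp(\psi),\psi)\in\mathcal{L}_J$, so by the defining formula of the backwards transform $(\Delta_0,(DI)^*\psi)\in\mathfrak{B}_I(\mathcal{L}_J)$; but $\psi\in\mathrm{Ann}(DL_N)$ means $(DI)^*\psi=0$, whence $(\Delta_0,0)$ lies in $DL_N\cap\mathfrak{B}_I(\mathcal{L}_J)=\{0\}$. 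This forces $\Delta_0=0$, so $J^\sharp(\psi)=0$, and therefore the intersection is trivial.

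Finally a dimension count closes the argument: by the injectivity of the second step $\rank J^\sharp(\mathrm{Ann}(DL_N))=\dim M-\dim N$, and with the trivial intersection this gives $\rank\big(J^\sharp(\mathrm{Ann}(DL_N))+DL_N\big)=(\dim M-\dim N)+(\dim N+1)=\dim M+1=\rank(DL\at{N})$, so the sum is direct and exhausts $DL\at{N}$. Since all three objects are smooth subbundles, the pointwise decomposition is a decomposition of vector bundles over $N$. I expect the only delicate point to be the bookkeeping of the three ambient bundles $DL\at{N}$, $DL_N$ and $\mathbb{D}L_N$ — in particular translating the cosymplectic condition through the definition of $\mathfrak{B}_I$ and through $(DI)^*$ — rather than any genuine analytic difficulty; everything else is fibrewise linear algebra.
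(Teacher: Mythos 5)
Your proposal is correct and follows essentially the same route as the paper's proof: injectivity of $J^\sharp$ on $\mathrm{Ann}(DL_N)$ via skew-symmetry plus transversality, triviality of the intersection via the cosymplectic condition read through $\mathfrak{B}_I$, and a rank count to conclude. The only difference is that you spell out the rank bookkeeping explicitly where the paper just says ``counting dimensions the claim follows.''
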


\begin{proof}
First we prove that $J^\sharp\at{\mathrm{Ann}(DL_N)}$ is injective. Let therefore $\alpha\in \mathrm{Ann}(DL_N)$, such that 
$J^\sharp(\alpha)=0$. Hence we have for an arbitrary $\beta\in J^1L$, that 
	\begin{align*}
	\alpha(J^\sharp(\beta))=-\beta(J^\sharp(\alpha))=0.
	\end{align*}
Hence $\alpha=\mathrm{Ann}(DL_N)\cap \mathrm{Ann}(\image(J^\sharp))=\mathrm{Ann}(DL_N+\image(J^\sharp))=\{0\}$, and  
$J^\sharp\at{\mathrm{Ann}(DL_N)}$ is injective. Let $\Delta\in DL_N\cap J^\sharp(\mathrm{Ann}(DL_N))$, then there exists an
$\alpha\in \mathrm{Ann}(DL_N)$, such that $J^\sharp(\alpha)=\Delta$. Thus, we have that $(\Delta,\alpha)\in \mathcal{L}_J$ 
and moreover $(\Delta,DI^*\alpha)\in \mathfrak{B}_I(\mathcal{L}_J)$, but since $\alpha\in \mathrm{Ann}(DL_N)$, we have that 
$DI^*\alpha=0$ and hence $\Delta=0$, since $N$ is cosymplectic. Counting dimensions the claim follows.
\end{proof}

Suppose that $\iota\colon N\hookrightarrow M$ is a cosymplectic transversal, then we have that 
	\begin{align*}
	\pr_\nu\circ \sigma\circ J^\sharp\colon \mathrm{Ann}(DL_N)\to \nu_N
	\end{align*}
is an isomorphism. Let us chose $\alpha\in \Secinfty( J^1L)$, such that $\alpha\at{N}=0$ and such that 
$\D^N\alpha\colon \nu_N\to \mathrm{Ann}(DL_N)\subseteq J^1L\at{N}$ is a right-inverse to 
$\pr_\nu\circ \sigma\circ J^\sharp$. We have then
	\begin{align*}
	\pr_\nu(\D^N \sigma(J^\sharp(\alpha)))= \pr_\nu(\sigma(J^\sharp(\D^N\alpha)))=\id_{\nu_N}
	\end{align*}
and hence we have that $T\nu(\sigma(J^\sharp(\alpha)))=\mathcal{E}$. Multiplying $\alpha$ by a bump-function, which is 1 near
$N$, we may arrange that $\sigma(J^\sharp(\alpha))$ is complete and hence $J^\sharp(\alpha)$ is an Euler-like derivation. By 
Theorem \ref{Thm: NormFormDJ}, we have that 
	\begin{align*}
	\mathfrak{B}_\Psi(\mathcal{L}_J)=\mathfrak{B}_P(\mathcal{L}_{J_N})^\omega,
	\end{align*}			
where $\omega = \Psi^*\int_{0}^{1}\frac{1}{t}(\Phi_{\log(t)}^\Delta)^*(\D_L\alpha)\D t$ and $\Psi\colon L_\nu\to L_U$ is the unique tubular neighbourhood, such that $\Psi^*(J^\sharp(\alpha))=\Delta_\mathcal{E}$. 

\begin{proposition}\label{Prop: Propertiesomega}
The 2-form $\omega\in \Omega_{L_\nu}^2(\nu_N)$ shrinked to $N$ has kernel $DL_N$.
\end{proposition}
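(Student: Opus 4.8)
The plan is to compute $\omega$ directly at the points of the zero section $N\subseteq\nu_N$ and read off its kernel there. Using Eq.~\ref{Eq: CommutationD} in the form $\Lambda_t\circ\Psi=\Psi\circ P_t$ together with $\Psi^*\D_L\alpha=\D_{L_\nu}\beta$, where $\beta:=\Psi^*\alpha$ (which vanishes on $N$ since $\alpha\at{N}=0$ and $\Psi\at{N}=\id$), the $2$-form produced in the proof of Theorem~\ref{Thm: NormFormDJ} can be rewritten as
\begin{align*}
\omega=\int_0^1\frac{1}{t}\,P_t^*\D_{L_\nu}\beta\,\D t.
\end{align*}
Fix $p\in N$. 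Since the scaling satisfies $P_t(0_p,l_p)=(0_p,l_p)$, it fixes $p$, so
\begin{align*}
\omega_p=\int_0^1\frac{1}{t}\,(D_pP_t)^*(\D_{L_\nu}\beta)_p\,\D t.
\end{align*}
My first step is to make $D_pP_t$ explicit. At the zero section there is the canonical splitting $DL_\nu\at{N}=DJ(DL_N)\oplus\nabla(\nu_N)$, where $J\colon L_N\to L_\nu$ is the zero-section inclusion and $\nabla\colon\mathrm{Ver}(\nu_N)\to DL_\nu$ is the flat partial connection used to build $\Delta_\mathcal{E}$, using $\mathrm{Ver}(\nu_N)\at{N}\cong\nu_N$. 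As $P_t$ is a line bundle automorphism covering the fibrewise dilation and acting trivially on the fibres of $L_\nu$, the model $\Phi_t(v_p,l_p)=(\E^t v_p,l_p)$ shows that $D_pP_t$ is the identity on $DL_N$ and multiplication by $t$ on $\nu_N$.

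Feeding this in, the integrand decomposes by bidegree in $DL_N\oplus\nu_N$: on $DL_N\times DL_N$ it is $t^{-1}(\D_{L_\nu}\beta)_p$, on $DL_N\times\nu_N$ it is $(\D_{L_\nu}\beta)_p$, and on $\nu_N\times\nu_N$ it is $t\,(\D_{L_\nu}\beta)_p$. The first block vanishes: restricting along the zero section gives $J^*(\D_{L_\nu}\beta)=\D_{L_\nu}(J^*\beta)=0$ since $\beta\at{N}=0$, so $(\D_{L_\nu}\beta)_p$ is zero on $DL_N\times DL_N$; this also guarantees integrability at $t=0$. Integrating the remaining two weights yields
\begin{align*}
\omega_p\at{DL_N\times\nu_N}=(\D_{L_\nu}\beta)_p,\qquad\omega_p\at{\nu_N\times\nu_N}=\tfrac{1}{2}(\D_{L_\nu}\beta)_p.
\end{align*}
Next I would kill the $DL_N\times\nu_N$ block. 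Since $\beta\at{N}=0$, for $\xi\in DL_N$ and a vertical derivation $\widehat{\eta}=\nabla(\eta)$ the Cartan formula collapses to $(\D_{L_\nu}\beta)_p(\xi,\widehat{\eta})=-(\D^N\beta(\eta))(\xi)$. Because $\Psi\at{N}=\id$ the normal derivative $\D^N\beta$ is identified with $\D^N\alpha$, which by the choice of $\alpha$ takes values in $\mathrm{Ann}(DL_N)$; hence this block is zero too, and $DL_N\subseteq\ker(\omega_p)$.

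It remains to prove nondegeneracy on $\nu_N$, which is the heart of the argument. Writing $\mu_i=\D^N\alpha(\eta_i)\in\mathrm{Ann}(DL_N)$ and using $\beta\at{N}=0$ once more, the third block becomes $(\D_{L_\nu}\beta)_p(\widehat{\eta_1},\widehat{\eta_2})=\mu_1(\widehat{\eta_2})-\mu_2(\widehat{\eta_1})$ after identifying $DL_\nu\at{N}\cong DL\at{N}$ via $\Psi\at{N}=\id$. Since $\D^N\alpha$ is a right inverse of $\pr_\nu\circ\sigma\circ J^\sharp$, we have $\sigma(J^\sharp\mu_i)\equiv\eta_i$ modulo $TN$, hence $J^\sharp\mu_i\equiv\widehat{\eta_i}$ modulo $DL_N$; as $\mu_j\in\mathrm{Ann}(DL_N)$ this gives $\mu_j(\widehat{\eta_i})=\mu_j(J^\sharp\mu_i)=J(\mu_j,\mu_i)$, so $(\D_{L_\nu}\beta)_p(\widehat{\eta_1},\widehat{\eta_2})=2J(\mu_1,\mu_2)$ and therefore $\omega_p(\widehat{\eta_1},\widehat{\eta_2})=J(\mu_1,\mu_2)$. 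Finally, $J\at{\mathrm{Ann}(DL_N)}$ is nondegenerate: if $\mu_1(J^\sharp\mu_2)=0$ for all $\mu_2\in\mathrm{Ann}(DL_N)$, then $\mu_1\in\mathrm{Ann}(DL_N+J^\sharp(\mathrm{Ann}(DL_N)))=\{0\}$ by Lemma~\ref{Lem: CanSplit}, and since $\D^N\alpha$ is an isomorphism onto $\mathrm{Ann}(DL_N)$ this forces $\omega_p$ to be nondegenerate on $\nu_N$, so $\ker(\omega_p)=DL_N$. The main obstacle is exactly this last identification of the vertical block with the Jacobi form $J$ on $\mathrm{Ann}(DL_N)$, together with the careful bookkeeping of the three scaling weights; the vanishing of the first two blocks is comparatively soft, relying only on $\alpha\at{N}=0$ and the defining property $\D^N\alpha\in\mathrm{Ann}(DL_N)$.
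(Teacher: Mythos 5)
Your proof is correct, but it proceeds quite differently from the paper's. The paper's own argument never decomposes $DP_t$ into weight spaces: it shows directly, via the Cartan-type computation $\D_L\alpha(\Delta,\Box)\at{N}=-(\Lie_\Box\alpha)(\Delta)\at{N}=\D^N\alpha([\sigma(\Box)])(\Delta)=0$ for $\Delta\at{N}\in\Secinfty(DL_N)$, that $DL_N$ lies in the kernel of $\D_L\alpha$ along $N$; it then observes that each integrand $\tfrac{1}{t}(\Phi^\Delta_{\log t})^*\D_L\alpha$ inherits this property because the flow fixes $DL_N$ at points of $N$, and hence so does $\omega$. That argument only yields the inclusion $DL_N\subseteq\ker(\omega^\flat)\at{N}$; the transverse nondegeneracy needed for the equality asserted in the statement is supplied only afterwards, in Lemma~\ref{Lem: PropOme II}, by identifying $\omega$ on $\nu_N$ with the canonical nondegenerate form $\Theta$. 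Your route --- fixing $p\in N$, splitting $DL_\nu\at{N}=DJ(DL_N)\oplus\nabla(\nu_N)$, reading off the weights $t^{-1},1,t$ of the three blocks of $\tfrac1t P_t^*\D_{L_\nu}\beta$, and integrating --- computes $\omega\at{N}$ in closed form. This buys you more in one pass: the $t^{-1}$-block vanishes by $J^*\D\beta=\D J^*\beta=0$ (which is also what makes the integral converge), the mixed block vanishes because $\D^N\alpha$ takes values in $\mathrm{Ann}(DL_N)$, and the vertical block is identified with the nondegenerate pairing $\mu_1(J^\sharp\mu_2)$ on $\mathrm{Ann}(DL_N)$, which is exactly $\Theta$; so you obtain the equality of kernels stated in the Proposition \emph{and} essentially Lemma~\ref{Lem: PropOme II} simultaneously. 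The price is a heavier reliance on the explicit model $\Phi_t(v_p,l_p)=(\E^t v_p,l_p)$ for the flow of $\Delta_\mathcal{E}$ and on the identification $\D^N\beta=\D^N\alpha$ via $\Psi\at{N}=\id$ (the latter implicitly uses that $\nu(\psi)$ is the canonical identification, which is part of the definition of a tubular neighbourhood); both points are legitimate but deserve the one-line justifications you give. In short: same inputs ($\alpha\at{N}=0$, $\D^N\alpha\in\mathrm{Ann}(DL_N)$, Lemma~\ref{Lem: CanSplit}), but a sharper, more quantitative organization of the computation than the paper's soft "each integrand kills $DL_N$" argument.
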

\begin{proof}
One can show, in local coordinates,  that $\D^N\alpha([\sigma(\Box)\at{N}])=\Lie_\Box\alpha\at{N}$ for all
$\Box\in \Secinfty(DL)$. Hence we have trivially $\Lie_\Delta\alpha\at{N}=0$ for $\Delta\in \Secinfty(DL)$, such that $\Delta\at{N}\in\Secinfty(DL_N)$. Let now $\Delta,\Box\in \Secinfty(DL)$, such that 
$\Delta\at{N}\in\Secinfty(DL_N)$, then
	\begin{align*}
	\D_L\alpha(\Delta,\Box)\at{N}&
	=-(\D_L\iota_\Delta\alpha)(\Box)\at{N}=-\Box(\alpha(\Delta))\at{N}\\&
	=-(\Lie_\Box\alpha)(\Delta)\at{N}-\alpha([\Box,\Delta])\at{N}
	= -(\Lie_\Box\alpha)\at{N}(\Delta)\\&
	=\D^N\alpha([\sigma(\Box)\at{N}])(\Delta)\\&
	=0,
	\end{align*}
where the last equality follows since $\D^N\alpha\colon \nu_N\to \mathrm{Ann}(DL_N)$. Hence we have that $\ker((\D_L\alpha)^\flat)\supseteq DL_N$, in particular this is true for $\frac{1}{t}(\Phi_{\log(t)}^\Delta)^*(\D_L\alpha)$, since 
$\Phi_{\log(s)}\at{N}$ is a gauge transformation fixing $DL_N$. Thus it is true also for $\omega$, since 
$D\Psi\at{DL_N}=\id$.
\end{proof}
We want to describe the structure of $\omega$ at $N$. Note that for a cosymplectic transversal $N$, the normal bundle 
always comes together with a canonical symplectic (i.e. non-degenerate) $L_N$-valued 2-form $\Theta\in \Secinfty(\Anti^2 \nu_N^*\tensor L_N)$ defined by 
	\begin{align*}
	\Theta(X,Y)= (\pr_\nu\circ \sigma\circ J^\sharp\at{\mathrm{Ann}(DL_N)})^{-1}(X)(Y)
	\end{align*}
\begin{lemma}\label{Lem: PropOme II}
The 2-form $\omega\in \Omega_{L_{\nu}}^2(\nu_N)$ coincides, shrinked to $\nu_N\subseteq DL_{\nu_N}$, with $\Theta$. 
\end{lemma}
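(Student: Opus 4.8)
The plan is to transport the entire computation to the linear model $L_\nu$ and evaluate on the zero section. Write $\tilde\alpha := \Psi^*\alpha \in \Omega^1_{L_\nu}(\nu_N)$. Since $\Psi^*$ commutes with the $\D_L$-differential and, by Eq.~\ref{Eq: CommutationD}, $\Lambda_t\circ\Psi = \Psi\circ P_t$ with $\Lambda_t = \Phi^\Delta_{\log t}$, the defining formula for $\omega$ rewrites as
\[
\omega = \int_0^1 \frac{1}{t}\,P_t^*\big(\D_{L_\nu}\tilde\alpha\big)\,\D t,
\]
where $P_t$ is the fibre scaling on $L_\nu$, i.e. the flow of $\Delta_\mathcal{E}$ at time $\log t$.

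First I would evaluate $\omega$ on two vertical vectors $X,Y \in \nu_N \subseteq DL_{\nu_N}$ over a point of $N$. Here I use three features of $P_t$: it fixes the zero section $N$, it restricts to the identity on $L_N$ (because $\Delta_\mathcal{E}\at{N}=0$), and its symbol $T\kappa_t$ scales vertical vectors by $t$. Consequently the pullback carries no line-bundle twist and picks up exactly two factors of $t$, so that $(P_t^*\D_{L_\nu}\tilde\alpha)\at{N}(X,Y) = t^2\,\D_{L_\nu}\tilde\alpha\at{N}(X,Y)$. Performing the scalar integral $\int_0^1 t\,\D t = \tfrac12$ then gives $\omega\at{N}(X,Y) = \tfrac12\,\D_{L_\nu}\tilde\alpha\at{N}(X,Y)$.

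It remains to identify $\tfrac12\,\D_{L_\nu}\tilde\alpha\at{N} = \tfrac12\,\Psi^*(\D_L\alpha)\at{N}$ on vertical vectors with $\Theta$. I would expand $\D_L\alpha$ by the Cartan formula $\D_L\alpha(\Delta_1,\Delta_2) = \Delta_1(\alpha(\Delta_2)) - \Delta_2(\alpha(\Delta_1)) - \alpha([\Delta_1,\Delta_2])$; since $\alpha\at{N}=0$ the bracket term drops, and the identity $\D^N\alpha([\sigma(\Box)\at{N}]) = \Lie_\Box\alpha\at{N}$ established in the proof of Proposition~\ref{Prop: Propertiesomega} turns the remaining two terms into
\[
\D_L\alpha\at{N}(\Delta_1,\Delta_2) = \D^N\alpha([\sigma\Delta_1])(\Delta_2) - \D^N\alpha([\sigma\Delta_2])(\Delta_1).
\]
Evaluating on the lifts $D\Psi X, D\Psi Y$ of two vertical vectors, and using that $D\Psi\at{N}$ preserves normal classes (which rests on $\nu(\psi)=C_{\nu_N}^{-1}$), each term becomes $\pm\,\D^N\alpha(X)(D\Psi Y)$; because $\D^N\alpha(X)\in\mathrm{Ann}(DL_N)$ annihilates the tangential summand of Lemma~\ref{Lem: CanSplit}, this pairing depends only on the normal class $Y$ and equals $\Theta(X,Y)$ by definition of $\Theta$. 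The skew-symmetrization hence gives $\D_L\alpha\at{N}(X,Y) = 2\,\Theta(X,Y)$, and combined with the factor $\tfrac12$ above this proves $\omega\at{N}=\Theta$.

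The hard part will be the bookkeeping of the two competing identifications of $\nu_N$: as the vertical subbundle of $DL_{\nu_N}$ on the model side, and as $J^\sharp(\mathrm{Ann}(DL_N))\subseteq DL\at{N}$ via $\pr_\nu\circ\sigma\circ J^\sharp$ on the target side, together with the verification that $D\Psi\at{N}$ intertwines them through the defining property $\nu(\psi)=C_{\nu_N}^{-1}$ of the tubular neighbourhood. The cancellation of the two factors of $2$ — one from the radial integral, one from the skew-symmetrization of $\D_L\alpha$ — serves as the consistency check that these identifications are normalized compatibly.
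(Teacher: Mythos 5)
Your argument is correct, but it reaches the conclusion by a genuinely different route than the paper. The paper never evaluates the integral: it first computes $D\Psi\circ\chi=\D^N\Delta=J^\sharp\circ\D^N\alpha$, so that $D\Psi\at{N}$ carries the vertical copy of $\nu_N$ inside $DL_\nu\at{N}$ onto the summand $J^\sharp(\mathrm{Ann}(DL_N))$ of Lemma \ref{Lem: CanSplit}, and then reads off $\omega\at{N}$ \emph{indirectly} from the already-established identity $\mathfrak{B}_\Psi(\mathcal{L}_J)=\mathfrak{B}_P(\mathcal{L}_{J_N})^\omega$ together with $\ker(\omega^\flat)\at{N}=DL_N$: along $N$ the $B$-field needed to produce $\mathfrak{B}_\Psi(\mathcal{L}_J)$ from $\mathfrak{B}_P(\mathcal{L}_{J_N})$ is forced, on the normal directions, to be the inverse of $\pr_\nu\circ\sigma\circ J^\sharp$, i.e.\ $\Theta$ by definition. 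You instead compute $\omega\at{N}$ directly from the integral formula: the weight count for $P_t$ at points of $N$ (weight $0$ on $DL_N$, weight $1$ on $\chi(\nu_N)$, no twist on $L_N$) gives $\omega\at{N}=\tfrac12\,\D_{L_\nu}(\Psi^*\alpha)\at{N}$ on vertical arguments, and the Cartan formula combined with $\Lie_\Box\alpha\at{N}=\D^N\alpha([\sigma(\Box)\at{N}])$ identifies $\D_L\alpha\at{N}$ on those arguments with the skew-symmetrization $2\Theta$; the two factors of $2$ cancel, and this cancellation is a genuine consistency check (it works out in the flat linear model). Your observation that only the normal class of $D\Psi(\chi(Y))$ matters—because $\D^N\alpha(X)\in\mathrm{Ann}(DL_N)$ kills the tangential summand and the $J^\sharp(\mathrm{Ann}(DL_N))$-component is determined by the normal class—lets you use a weaker input than the paper's exact formula for $D\Psi\circ\chi$, though you still need $\nu(\psi)=C_{\nu_N}^{-1}$ at that point. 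What your version buys is independence from the conclusion of Theorem \ref{Thm: NormFormDJ} (only the explicit formula for $\omega$ is used) and the explicit linearization $\omega\at{N}=\tfrac12\,\D_L(\Psi^*\alpha)\at{N}$; what the paper's version buys is brevity, since the matching of the two Dirac--Jacobi bundles has already been proven and does the work for free.
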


\begin{proof}
Note that for a cosymplectic transversal, we have
	\begin{align*}
	DL\at{N}=DL_N\oplus J^\sharp(\mathrm{Ann}(DL_N))=DL_N\oplus \nu_N
	\end{align*}	 
with the canonical identification 
	\begin{align*}
	J^\sharp(\mathrm{Ann}(DL_N))=\frac{DL\at{N}}{DL_N}=\nu_N.
	\end{align*}
Moreover, we have 
	\begin{align*}
	DL_\nu\at{N}=DL_N\oplus \nu_N, 
	\end{align*}
where we include $\nu_N$ by the following map:
	\begin{align*}
	\chi\colon \nu_N\ni v_n \to \big(\lambda \to \frac{\D}{\D t}\At{t=0} P_0\lambda(p_t(v_p))\big) \in D_n L_\nu. 
	\end{align*}
It is clear that $D\Psi$ fixes $DL_N$, since $\Psi\at{N}\colon L_N\to L_N$ is identity. We want to show that 
$D\Psi(\nu_N)\subseteq J^\sharp(\mathrm{Ann}(DL_N))$. One can show that by an elementary calculation, that 
	\begin{align*}
	D\Psi (\chi(v_n))= \lim_{t\to 0} \frac{\Delta_{\lambda_t(\psi(v_n))}}{t}
	\end{align*}
using Equation \ref{Eq: CommutationD}. But by defintion, we have that 
	\begin{align*}
	\D^N\Delta(v_n)= \lim_{t\to 0} \frac{\Delta_{\lambda_t(\psi(v_n))}}{t}
	\end{align*}
hence  $D\Psi\circ\chi=\D^N\Delta = J^\sharp\circ \D^N\alpha$, but $\alpha$ was chosen in such a way that 
$\D^N\alpha$ takes values in $\mathrm{Ann}(DL_N)$. Thus $D\Psi\at{N}$ respects the splitting. 
Using this and 
 	\begin{align*}
 	\mathfrak{B}_\Psi(\mathcal{L}_J)=\mathfrak{B}_P(\mathcal{L}_{J_N})^\omega
 	\end{align*}
and $\ker(\omega^\flat)\at{N}=DL_N$ and the definition of $\Theta$, we see that at $N$ they have to coincide.
\end{proof}
This leads us to the normal form theorem for Jacobi manifolds.

\begin{theorem}[Normal Form  for Jacobi bundles I]\label{Thm: LocNorFor I}
Let $L\to M$ be a line bundle, let $J$ be a Jacobi structure and let $N\to M$ be a cosymplectic transversal. For a closed 
2-form 
$\omega\in \Omega_{L_\nu}^2(\nu_N)$, such that $\ker(\omega^\flat)\at{N}=DL_N$ and $\omega$ coincides with $\Theta$ at $\nu_N\subseteq DL_\nu$. Then
\begin{align*}
\mathfrak{B}_P(\mathcal{L}_{J_N})^\omega
\end{align*}
is the graph of a Jacobi structure near the zero section and there exists a fat tubular neighbourhood $\Psi\colon L_\nu\to L_U$ which is a Jacobi map near the zero section. 
\end{theorem}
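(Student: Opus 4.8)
The plan is to establish the two assertions separately, reducing the first to Proposition \ref{Prop: DJtoJ} and obtaining the second from the normal form construction preceding the statement. To begin, I would check that $\mathfrak{B}_P(\mathcal{L}_{J_N})^\omega$ is an (untwisted) Dirac-Jacobi bundle. Indeed $\mathcal{L}_{J_N}\subseteq\mathbb{D}L_N$ is Dirac-Jacobi and $P\colon L_\nu\to L_N$ is a regular line bundle morphism covering the submersion $\nu_N\to N$, so $DP$ is fibrewise surjective and $\ker DP^*=0$; the clean intersection hypothesis of Theorem \ref{Thm: CleanInt} thus holds and $\mathfrak{B}_P(\mathcal{L}_{J_N})$ is Dirac-Jacobi. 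Since $\omega$ is closed, $\exp(\omega)$ is a (non-twisted) Courant-Jacobi automorphism by Lemma \ref{Lem: IsoCJ-Aut}, so $\mathfrak{B}_P(\mathcal{L}_{J_N})^\omega$ is again Dirac-Jacobi. By Proposition \ref{Prop: DJtoJ} it then suffices to verify that
\begin{align*}
DL_\nu\cap \mathfrak{B}_P(\mathcal{L}_{J_N})^\omega=\{0\}
\end{align*}
in a neighbourhood of the zero section.

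\textbf{The key step} is the computation of this intersection at $N$, where it reduces to linear algebra. Using the splittings $DL_\nu\at{N}=DL_N\oplus\nu_N$ and $J^1L_\nu\at{N}=J^1L_N\oplus(\nu_N^*\tensor L_N)$ from the proof of Lemma \ref{Lem: PropOme II}, and the fact that $DP\at{N}$ is the projection killing $\nu_N$, the pullback $\mathfrak{B}_P(\mathcal{L}_{J_N})\at{N}$ consists of the elements $(v+J_N^\sharp(\psi),\tilde\psi)$ with $v\in\nu_N$ arbitrary, $\psi\in J^1L_N$, and $\tilde\psi\in J^1L_N\subseteq J^1L_\nu$ the horizontal lift of $\psi$. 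Applying $\exp(\omega)$ sends such an element to $(v+J_N^\sharp(\psi),\tilde\psi+\iota_{v+J_N^\sharp(\psi)}\omega)$. Here the two hypotheses on $\omega$ enter decisively: because $\ker(\omega^\flat)\at{N}=DL_N$ and $J_N^\sharp(\psi)\in DL_N$, the term $\iota_{J_N^\sharp(\psi)}\omega$ vanishes at $N$, and since $\iota_v\omega$ annihilates $DL_N$ (by skew-symmetry and $DL_N\subseteq\ker\omega^\flat$) while restricting to the nondegenerate $\Theta$ on $\nu_N$, one gets $\iota_v\omega=\iota_v\Theta\in\nu_N^*\tensor L_N$. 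An element lies in $DL_\nu$ iff its covector part $\tilde\psi+\iota_v\Theta$ vanishes; but $\tilde\psi\in J^1L_N$ and $\iota_v\Theta\in\nu_N^*\tensor L_N$ lie in complementary summands, so both must vanish. Nondegeneracy of $\Theta$ forces $v=0$ and injectivity of the lift forces $\psi=0$, so the intersection is trivial at $N$.

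Triviality of the intersection is an open condition on the base, so it persists in a neighbourhood of the zero section, and Proposition \ref{Prop: DJtoJ} yields the Jacobi structure $\tilde J$ with $\mathcal{L}_{\tilde J}=\mathfrak{B}_P(\mathcal{L}_{J_N})^\omega$ there, proving the first claim. For the fat tubular neighbourhood I would return to the construction preceding the theorem: with $\alpha$ chosen there, $\Delta=J^\sharp(\alpha)$ is an Euler-like derivation, and Lemma \ref{Ex: UnFatTubNei} provides a unique fat tubular neighbourhood $\Psi\colon L_\nu\to L_U$ with $\Psi^*\Delta=\Delta_\mathcal{E}$. By Theorem \ref{Thm: NormFormDJ} and the explicit integral, $\mathfrak{B}_\Psi(\mathcal{L}_J)=\mathfrak{B}_P(\mathcal{L}_{J_N})^\omega$ for exactly the $\omega$ at hand, which satisfies the two boundary conditions by Proposition \ref{Prop: Propertiesomega} and Lemma \ref{Lem: PropOme II}. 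Since $\Psi$ is a regular line bundle morphism, $\mathfrak{B}_\Psi=\mathbb{D}\Psi^{-1}$ on $L_U$, and backwards transformation by a line bundle isomorphism sends $\mathcal{L}_J$ to the graph of the pulled-back Jacobi structure; comparing with the first claim gives $\tilde J=\Psi^*J$ near the zero section, which is precisely the assertion that $\Psi$ is a Jacobi map there.

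\textbf{The main obstacle} is the intersection computation at $N$: it is exactly the combination of the two properties of $\omega$ — kernel $DL_N$ and coincidence with the nondegenerate $\Theta$ on the normal directions — that pushes the otherwise problematic vertical derivations $\nu_N\subseteq\mathfrak{B}_P(\mathcal{L}_{J_N})\cap DL_\nu$ out of $DL_\nu$ after the gauge transformation. The delicate point is checking that the two covector contributions land in complementary summands of $J^1L_\nu\at{N}$, so that they must vanish separately; this requires care with the identifications of $DL_\nu\at{N}$ and $J^1L_\nu\at{N}$ furnished by the connection and the map $\chi$.
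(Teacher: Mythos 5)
Your argument for the first assertion is sound, and it in fact supplies a detail the paper leaves implicit: the paper deduces everything from the Moser trick of Appendix \ref{App: Moser}, whose hypotheses already require knowing that the gauge-transformed bundles are graphs of Jacobi structures near $N$, and your pointwise computation of $DL_\nu\cap \mathfrak{B}_P(\mathcal{L}_{J_N})^\omega$ along $N$ --- using that the two contributions to the covector part land in the complementary summands $\mathrm{Ann}(\nu_N)$ and $\mathrm{Ann}(DL_N)$ of $J^1L_\nu\at{N}$ --- together with upper semicontinuity of the rank of the intersection, is exactly the verification needed.

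The second assertion, however, has a genuine gap. The theorem quantifies over \emph{all} closed $\omega$ with $\ker(\omega^\flat)\at{N}=DL_N$ and $\omega\at{\nu_N}=\Theta$, whereas your construction of $\Psi$ only yields the identity $\mathfrak{B}_\Psi(\mathcal{L}_J)=\mathfrak{B}_P(\mathcal{L}_{J_N})^{\omega_0}$ for the one particular form $\omega_0=\Psi^*\int_0^1\frac{1}{t}(\Phi^{\Delta}_{\log t})^*\D_L\alpha\,\D t$ produced by the Euler-like derivation; a given admissible $\omega$ need not coincide with $\omega_0$ away from $N$, so the phrase ``for exactly the $\omega$ at hand'' is unjustified. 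The freedom to change $\omega$ is the whole point of the statement: in the proof of Theorem \ref{Thm: SplittingI} one replaces $\omega_0$ by the constant form $\D q^i\wedge\D p_i-\mathbbm{1}^*\wedge p_i\D q^i$. The missing step is the Moser argument: since $\omega$ and $\omega_0$ both have kernel $DL_N$ at $N$ and both restrict to $\Theta$ on $\nu_N$, they agree along $N$, so $\sigma_t:=t(\omega-\omega_0)$ vanishes on $N$; by your own first-part computation every $\mathfrak{B}_P(\mathcal{L}_{J_N})^{\omega_0+\sigma_t}$, $t\in[0,1]$, is the graph of a Jacobi structure near the zero section, and Appendix \ref{App: Moser} then produces a flow in $\Aut(L_\nu)$, defined near the zero section and fixing $N$, intertwining the $\omega_0$- and $\omega$-structures. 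Composing $\Psi$ with this flow gives the fat tubular neighbourhood required for the given $\omega$.
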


\begin{proof}
We have proven this theorem for the special $\omega$ given by 
	\begin{align*}
	\omega=\int_{0}^{1}\frac{1}{t}(\Phi_{\log(t)}^\Delta)^*\D_L\alpha\D t.
	\end{align*}
Let $\omega'$ be a second 2-form fulfilling the requirements of the theorem, then
\begin{align*}
\sigma_t:=t(\omega'-\omega)
\end{align*}
is a (time-dependent) 2-form such that $\sigma_0=0$ and moreover $\sigma_t\at{N}=0$. Thus,
	\begin{align*}
	(\mathfrak{B}_P(\mathcal{L}_{J_N})^\omega)^{\sigma_t}=\mathfrak{B}_P(\mathcal{L}_{J_N})^{\omega+\sigma_t}
	\end{align*}	 
is a Jacobi structure near $N$. Now we can apply Appendix \ref{App: Moser} to get the result. 
\end{proof}

An immediaty consequence of this theorem is the Splitting for Jacobi manifolds around a locally conformal symplectic leaf, proven by Dazord, Lichnerowicz and Marle in \cite{SplittingThmJac}. 

\begin{theorem}\label{Thm: SplittingI}
Let $L\to M $ be a line bundle, let $J\in \Secinfty(\Anti^2 (J^1L)^*\tensor L)$ be a Jacobi tensor and let $p_0\in M$ be a 
locally conformal symplectic point. 
Then there are a line bundle trivialization $L_U\cong U\times \mathbb{R}$ around $p_0$ and a cosymplectic transversal $N
\hookrightarrow U$, such that $U\cong U_{2q}\times N$ for an open subset $0\in U_{2q}\subseteq \mathbb{R}^{2q}$ and the corresponding Jacobi pair $(\Lambda,E)$ is transformed (via
this isomorphism) to 
	\begin{align*}
	(\Lambda,E)=(\pi_{\mathrm{can}}+\Lambda_N+ E_N\wedge Z_{\mathrm{can}}, E_N),
	\end{align*}
where $(\Lambda_N, E_N)$ is the induced Jacobi structure on the transversal $N$ and the canonical stuctures on the fiber are given by 
$(\pi_{\mathrm{can}}, Z_{\mathrm{can}})=
(\frac{\partial}{\partial p_i}\wedge \frac{\partial}{\partial q^i},p_i\frac{\partial}{\partial p_i})$. 	  
\end{theorem}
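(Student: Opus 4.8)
The plan is to derive the splitting from the Normal Form Theorem for Jacobi bundles (Theorem \ref{Thm: LocNorFor I}) combined with a fiberwise Darboux normalization. First I would check that a minimal transversal $N$ through the locally conformal symplectic point $p_0$ is cosymplectic. Since $p_0$ is locally conformal symplectic, the leaf through it is even-dimensional, say of dimension $2q$, and $\sigma(\pr_D(\mathcal{L}_J))|_{p_0} \oplus T_{p_0} N = T_{p_0} M$; arguing exactly as for cosymplectic transversals one obtains $DL_N \cap \mathfrak{B}_I(\mathcal{L}_J) = \{0\}$ at $p_0$ and hence on a whole neighbourhood. By Proposition \ref{Prop: DJtoJ} the backwards transform $\mathcal{L}_{J_N} = \mathfrak{B}_I(\mathcal{L}_J)$ is the graph of an induced Jacobi tensor $J_N$ on $N$, and Lemma \ref{Lem: CanSplit} yields the decomposition $J^\sharp(\mathrm{Ann}(DL_N)) \oplus DL_N = DL|_N$ together with the nondegenerate $L_N$-valued $2$-form $\Theta$ on the rank-$2q$ normal bundle $\nu_N$.

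Next I would apply Theorem \ref{Thm: LocNorFor I} to obtain a fat tubular neighbourhood $\Psi \colon L_\nu \to L_U$ that is a Jacobi map near the zero section, so that under $\Psi$ the structure $J$ corresponds to $\mathfrak{B}_P(\mathcal{L}_{J_N})^\omega$ for a closed $\omega$ with $\ker(\omega^\flat)|_N = DL_N$ and $\omega|_{\nu_N} = \Theta$ by Lemma \ref{Lem: PropOme II}. Crucially, Theorem \ref{Thm: LocNorFor I} permits any such $\omega$, so I am free to fix a convenient representative. I would then perform a Darboux normalization of the symplectic vector bundle $(\nu_N, \Theta)$: after trivializing $L_N \cong N \times \mathbb{R}$ and shrinking, identify $\nu_N \cong U_{2q} \times N$ with fiber coordinates $(p_i, q^i)$ in which $\Theta$ is the constant form $\sum_i dp_i \wedge dq^i$. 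This furnishes the product $U \cong U_{2q} \times N$ claimed in the statement.

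On this product I would take for $\omega$ the homogeneous representative whose de Rham part is the pullback of $\sum_i dp_i \wedge dq^i$ and whose coupling to $\mathbbm{1}^*$ is built from the Liouville primitive $\lambda = p_i\, dq^i$. Writing sections of $J^1 L$ as $\alpha + r\mathbbm{1}^*$ and using $\D_L = \D_{dR} + \mathbbm{1}^* \wedge$ with $\D_L \mathbbm{1}^* = 0$, the form $\omega$ splits into a purely symplectic piece and a piece proportional to $\mathbbm{1}^* \wedge \lambda$. Computing the graph of $\mathfrak{B}_P(\mathcal{L}_{J_N})^\omega$ as a Jacobi tensor and reading off the pair through $J = \Lambda + \mathbbm{1} \wedge E$, the symplectic piece inverts to $\pi_{\mathrm{can}} = \frac{\partial}{\partial p_i} \wedge \frac{\partial}{\partial q^i}$, the base data contribute $\Lambda_N$ and $E = E_N$, and the $\mathbbm{1}^*$-coupling produces the Liouville field $Z_{\mathrm{can}} = p_i \frac{\partial}{\partial p_i}$ (the unique field with $\iota_{Z_{\mathrm{can}}} \omega_{\mathrm{can}} = \lambda$) together with the cross term $E_N \wedge Z_{\mathrm{can}}$, giving exactly $(\Lambda, E) = (\pi_{\mathrm{can}} + \Lambda_N + E_N \wedge Z_{\mathrm{can}}, E_N)$.

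The hard part will be this last computation: showing that the $\mathbbm{1}^*$-coupling in $\omega$, acting on the trivially extended structure $\mathfrak{B}_P(\mathcal{L}_{J_N})$, generates both the homogeneity $Z_{\mathrm{can}}$ and the cross term $E_N \wedge Z_{\mathrm{can}}$ rather than a naive direct sum of the fiber and transversal data. This is precisely where the \emph{Jacobi}-specific features depart from the Poisson case. If a less canonical gauge for $\omega$ is chosen, the outcome differs from the stated normal form only by a further $B$-field vanishing along $N$, which the Moser argument of Appendix \ref{App: Moser}, already employed in Theorem \ref{Thm: LocNorFor I}, absorbs.
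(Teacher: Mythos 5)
Your proposal follows essentially the same route as the paper's proof: verify that a transversal at a locally conformal symplectic point is cosymplectic, Darboux-normalize $(\nu_N,\Theta)$ after trivializing the line bundle, choose the explicit closed representative $\omega=\D q^i\wedge \D p_i-\mathbbm{1}^*\wedge p_i\D q^i$ (your ``Liouville coupling''), and invoke Theorem \ref{Thm: LocNorFor I} before reading off the Jacobi pair. The only difference is cosmetic ordering and that you flag the final inversion of $\mathfrak{B}_P(\mathcal{L}_{J_N})^\omega$ as the hard step where the paper calls it an easy computation; the argument is the same.
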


\begin{proof}
We can assume from the beginning that the line bundle is trivial, since otherwise we can trivialize around $p_0$ and and shrink 
the line bundle to this open neighbourhood. Let us choose an arbitrary transversal $N$ to the leaf $S$ at $p_0$ (in the sense, that $S\times N = M$). It is easy to see that  
	\begin{align*}
	(DL_N\cap \mathfrak{B}_I(\mathcal{L}_J))\at{p}=\{0\},
	\end{align*}
and hence we can shrink to an open neighbourhood of $p_0$, where this equality holds. This means every transversal to a leaf is  a cosymplectic transversal near the intersection point. Let us from now on denote $p_0=(s_0,n_0)$, hence 
$\nu_N\cong T_{s_0}S \times N \cong \mathbb{R}^{2k}\times N$. Since the line bundle is trivial, we can identify $\nu_N$ together with $\Theta$ as a 
symplectic vector bundle, hence we find a possible smaller $N$ and a vector bundle automorphism of $\nu_N$, such that $\Theta$ is the constant symplectic form.  We can now choose 
	\begin{align*}
	\omega=\D q^i\wedge \D p_i - \mathbbm{1}^*\wedge p_i\D q^i \in \Omega_{L_\nu}(\nu_N)
	\end{align*}
where $(q,p)$ are the symplectic coordinates on $\nu_N\to N$. This 2-from is $\D_L$-closed and coincides with $\Theta$ on 
$N$,
moreover  $\ker(\omega^\flat)\at{N}=DL_N$. Hence the requirements of  Theorem \ref{Thm: LocNorFor I} are fulfilled and the claim follows by an easy computation. 
\end{proof}

\subsection{Cocontact transversals}
The second kind of transversals we want to discuss in the context of Jacobi geometry are cocontact transversals, which were also introduced before in 
Definition \ref{Cocontact}. In fact this notion is not enough for our purposes  and  we need to assume more information on the structure of the transversal, which is precisely the notion of homogeneous cocontact transversal from Definition
 \ref{Cocontact}.

\begin{lemma}
Let $L\to M$ be a line bundle, $J\in \Secinfty(\Anti^2(J^1L)^*\tensor L)$ be a Jacobi tensor with corresponding Dirac-
Jacobi structure $\mathcal{L}_J\in \mathbb{D}L$ and let   $\iota\colon N\hookrightarrow M$ be a 
homogeneous cocontact transversal with connection $\nabla\colon TN\to DL_N$. Then 
	\begin{align*}
	J^\sharp(\mathrm{Ann}(\image(\nabla)))\oplus \image(\nabla)=DL\at{N}.
	\end{align*}
\end{lemma}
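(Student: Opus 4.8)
The plan is to mirror the proof of Lemma \ref{Lem: CanSplit}, replacing $\mathrm{Ann}(DL_N)$ by $\mathrm{Ann}(\image(\nabla))$ and compensating for the extra line $DL_N\cap\mathfrak{B}_I(\mathcal{L}_J)$ that is now present because the transversal is cocontact rather than cosymplectic. Writing $\ell:=DL_N\cap\mathfrak{B}_I(\mathcal{L}_J)$, which has rank one and satisfies $\image(\nabla)\oplus\ell=DL_N$ by the definition of a homogeneous cocontact transversal, I would first record the dimension bookkeeping I need: $\image(\nabla)$ has rank $\dim N$, so $\mathrm{Ann}(\image(\nabla))\subseteq J^1L\at{N}$ has rank $\rank(DL)-\dim N$, and the two summands in the claimed decomposition therefore have complementary ranks inside $DL\at{N}$. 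The statement will then follow once I establish (i) that $J^\sharp$ restricted to $\mathrm{Ann}(\image(\nabla))$ is injective and (ii) that $\image(\nabla)\cap J^\sharp(\mathrm{Ann}(\image(\nabla)))=\{0\}$, exactly as in Lemma \ref{Lem: CanSplit}.

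For (i), the key preliminary observation is that $\ell\subseteq\image(J^\sharp)$: any generator $\Delta_0$ of $\ell$ satisfies $(\Delta_0,0)\in\mathfrak{B}_I(\mathcal{L}_J)$, which by the definition of the backwards transform forces $\Delta_0=J^\sharp(\hat\alpha)$ for some $\hat\alpha$. Combined with transversality $DL_N+\image(J^\sharp)=DL\at{N}$ and the splitting $DL_N=\image(\nabla)\oplus\ell$, this upgrades to $\image(\nabla)+\image(J^\sharp)=DL\at{N}$. Then for $\alpha\in\mathrm{Ann}(\image(\nabla))$ with $J^\sharp(\alpha)=0$, the skew-symmetry identity $\alpha(J^\sharp(\beta))=-\beta(J^\sharp(\alpha))=0$ gives $\alpha\in\mathrm{Ann}(\image(J^\sharp))$, so $\alpha\in\mathrm{Ann}(\image(\nabla)+\image(J^\sharp))=\mathrm{Ann}(DL\at{N})=\{0\}$.

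For (ii), which is where the cocontact case genuinely departs from the cosymplectic one, let $\Delta=J^\sharp(\alpha)\in\image(\nabla)$ with $\alpha\in\mathrm{Ann}(\image(\nabla))$. Since $(\Delta,\alpha)\in\mathcal{L}_J$ and $\Delta\in DL_N$, we get $(\Delta,DI^*\alpha)\in\mathfrak{B}_I(\mathcal{L}_J)$, and pairing this isotropically with the generator $(\Delta_0,0)$ of $\ell$ (which is $(\mathbbm{1},0)$ by Lemma \ref{Lem: MinTrnsConLf}) yields $0=\bla(\Delta,DI^*\alpha),(\Delta_0,0)\bra=\alpha(\Delta_0)$. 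The point is that in the cosymplectic argument $DI^*\alpha$ vanished outright, whereas here I only obtain the single extra linear condition $\alpha(\Delta_0)=0$; but since $\alpha$ already annihilates $\image(\nabla)$ and $DL_N=\image(\nabla)\oplus\langle\Delta_0\rangle$, this is exactly enough to force $\alpha\in\mathrm{Ann}(DL_N)$, hence $DI^*\alpha=0$ and $\Delta\in\ell$. Then $\Delta\in\image(\nabla)\cap\ell=\{0\}$.

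Together, (i) and (ii) make the sum direct, and the rank count from the first paragraph shows it fills all of $DL\at{N}$, giving the equality. I expect the main obstacle to be step (ii): the naive transcription of Lemma \ref{Lem: CanSplit} breaks down because $DI^*\alpha$ no longer vanishes for $\alpha\in\mathrm{Ann}(\image(\nabla))$, and the remedy is to extract precisely the missing condition $\alpha(\Delta_0)=0$ from isotropy against the distinguished section spanning the rank-one intersection $\ell$.
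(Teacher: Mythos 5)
Your proposal is correct and follows the same route the paper intends: the paper's own proof is just the one-line remark that it ``follows the same lines as Lemma \ref{Lem: CanSplit}'', and your argument is exactly that adaptation, carried out in full. You have moreover correctly identified and repaired the two points where a naive transcription would fail --- using $\ell\subseteq\image(J^\sharp)$ to upgrade transversality to $\image(\nabla)+\image(J^\sharp)=DL\at{N}$ for the injectivity step, and extracting the extra condition $\alpha(\Delta_0)=0$ from isotropy against the generator of the rank-one intersection since $DI^*\alpha$ no longer vanishes outright --- so the write-up is sound (only note that the generator of $\ell$ need not literally be $\mathbbm{1}$ away from the special point of Lemma \ref{Lem: MinTrnsConLf}, but your argument never uses that identification).
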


\begin{proof}
The proof follows the same lines as Lemma \ref{Lem: CanSplit}.
\end{proof}
We pick now, as in the cosymplectic case, an $\alpha\in \Secinfty(J^1L)$, such that $\alpha\at{N}=0$ and 
	\begin{align*}
	\D^N\alpha\colon \nu_N\to \mathrm{Ann}(\image(\nabla))\subseteq J^1L\at{N}
	\end{align*}
defines a splitting of $I^!\mathcal{L}\to \mathcal{L}\at{N}\to \nu_N$, i.e. 
$\pr_\nu\circ \sigma\circ J^\sharp\circ \D^N\alpha=\id_{\nu_N}$. Hence we have that $J^\sharp(\alpha)$, multiplied by a 
suitable bump function which is $1$ close to $N$, is an Euler-like derivation. By 
Theorem \ref{Thm: NormFormDJ}, we have that 
	\begin{align*}
	\mathfrak{B}_\Psi(\mathcal{L}_J)=\mathfrak{B}_P(\mathfrak{B}_I(\mathcal{L}))^\omega,
	\end{align*}			
where $\omega = \Psi^*\int_{0}^{1}\frac{1}{t}(\Phi_{\log(t)}^\Delta)^*(\D_L\alpha)\D t$ and $\Psi\colon L_\nu\to L_U$ is the 
unique tubular neighbourhood, such that $\Psi^*(J^\sharp(\alpha))=\Delta_\mathcal{E}$. We can prove, as before, 
the following
\begin{proposition}
The 2-form $\omega\in \Omega_{L_\nu}^2(\nu_N)$ shrinked to $N$ has kernel $\image(\nabla)$.
\end{proposition}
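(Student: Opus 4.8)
The plan is to run the proof of Proposition~\ref{Prop: Propertiesomega} line by line, replacing the subbundle $DL_N$ throughout by $\image(\nabla)$. Everything hinges on the same local identity as in the cosymplectic case, namely $\D^N\alpha([\sigma(\Box)\at{N}])=\Lie_\Box\alpha\at{N}$ for all $\Box\in\Secinfty(DL)$, which is insensitive to the type of transversal and so carries over unchanged. Recall that, by the choice of $\alpha$ made above, $\D^N\alpha$ takes values in $\mathrm{Ann}(\image(\nabla))$.

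First I would check that $\D_L\alpha$ annihilates $\image(\nabla)$ along $N$. Let $\Delta\in\Secinfty(DL)$ with $\Delta\at{N}\in\Secinfty(\image(\nabla))$. Since $\nabla$ is a connection we have $\sigma\circ\nabla=\id_{TN}$, so $\sigma(\Delta)\at{N}\in TN$, hence $[\sigma(\Delta)\at{N}]=0$ in $\nu_N$, and the identity above gives $\Lie_\Delta\alpha\at{N}=0$. Proceeding exactly as in Proposition~\ref{Prop: Propertiesomega}, and using $\alpha\at{N}=0$ to discard the bracket term, one obtains
\begin{align*}
\D_L\alpha(\Delta,\Box)\at{N}=-(\Lie_\Box\alpha)(\Delta)\at{N}=-\D^N\alpha([\sigma(\Box)\at{N}])(\Delta)=0,
\end{align*}
the final equality because $\D^N\alpha([\sigma(\Box)\at{N}])\in\mathrm{Ann}(\image(\nabla))$ while $\Delta\at{N}\in\image(\nabla)$. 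Thus $\ker((\D_L\alpha)^\flat)\at{N}\supseteq\image(\nabla)$.

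It remains to propagate this containment to $\omega=\Psi^*\int_0^1\frac{1}{t}(\Phi_{\log(t)}^\Delta)^*\D_L\alpha\,\D t$. Because $\Delta\at{N}=0$, the flow $\Phi_{\log(t)}^\Delta$ restricts to the identity on $L_N$, so $D\Phi_{\log(t)}^\Delta\at{N}$ is the identity on all of $DL_N=D(L_N)$, and in particular fixes every $v\in\image(\nabla)\subseteq DL_N$. Consequently, for such $v$ and arbitrary $w$,
\begin{align*}
\Big(\tfrac{1}{t}(\Phi_{\log(t)}^\Delta)^*\D_L\alpha\Big)\at{N}(v,w)=\tfrac{1}{t}(\D_L\alpha)\at{N}(v,D\Phi_{\log(t)}^\Delta\at{N}w)=0,
\end{align*}
so the containment is preserved by each integrand, by their integral, and finally by $\Psi^*$ since $D\Psi\at{DL_N}=\id$. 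This gives $\ker(\omega^\flat)\at{N}\supseteq\image(\nabla)$.

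Equality is obtained as in the cosymplectic situation: $\mathbbm{1}\notin\image(\nabla)$ and $\D^N\alpha([\sigma(\Box)\at{N}])(\mathbbm{1})$ is not identically zero, so $\mathbbm{1}\notin\ker(\omega^\flat)\at{N}$, and non-degeneracy of $\omega$ transverse to $\image(\nabla)$ (the cocontact analogue of Lemma~\ref{Lem: PropOme II}) pins the kernel down to $\image(\nabla)$. The one point genuinely new compared with Proposition~\ref{Prop: Propertiesomega} is that here $\image(\nabla)$ is a proper subbundle of $DL_N$, so that merely knowing the flow preserves $DL_N$ setwise would not suffice; what rescues the argument is that $\Delta\at{N}=0$ forces $D\Phi_{\log(t)}^\Delta\at{N}$ to fix $DL_N$ pointwise. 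This is the step I would verify most carefully.
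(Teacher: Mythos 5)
Your proof is correct and is essentially the paper's own argument: the paper literally says the proof ``follows the same lines as the proof of Proposition~\ref{Prop: Propertiesomega}'', and you carry out exactly that adaptation, replacing $DL_N$ by $\image(\nabla)$ and using that $\D^N\alpha$ now takes values in $\mathrm{Ann}(\image(\nabla))$. Your closing observation --- that since $\image(\nabla)$ is a proper subbundle of $DL_N$ one needs $D\Phi^\Delta_{\log(t)}\at{N}$ to fix $DL_N$ pointwise rather than merely setwise, which indeed follows from $\Delta\at{N}=0$ --- is precisely the one point where the analogy requires care, and you handle it correctly.
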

\begin{proof}
This proof follows the same lines as the proof of Proposition \ref{Prop: Propertiesomega}.
\end{proof}

As in the cosymplectic transversal case, we can define a skew symmetric 2-form
	\begin{align*}
	\Theta \in \Secinfty(\Anti^2 J^\sharp (\mathrm{Ann}(\image(\nabla))\tensor L_N)
	\end{align*}
by 
	\begin{align*}
	\Theta(X,Y)= (J^\sharp\at{\mathrm{Ann}(\image(\nabla))})^{-1}(X)(Y).
	\end{align*}
It is easy to see that $\Theta$ is non-degenerate. Moreover, we have 

\begin{lemma}
The 2-form $\omega\in \Omega_{L_{\nu}}^2(\nu_N)$ coincides, shrinked to $\nu_N\oplus K \subseteq DL_{\nu_N}$, with $\Theta$,
where we denote $K:=(DL_N\cap \mathfrak{B}_I(\mathcal{L}_J))$. 
\end{lemma}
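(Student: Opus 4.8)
The plan is to mirror the proof of Lemma~\ref{Lem: PropOme II}, systematically replacing the subbundle $DL_N$ there by $\image(\nabla)$ and the normal bundle $\nu_N$ by $\nu_N\oplus K$. First I would record the two relevant decompositions of the restricted gauge algebroids. From the preceding lemma of this subsection we have $DL\at{N}=\image(\nabla)\oplus J^\sharp(\mathrm{Ann}(\image(\nabla)))$, and a rank count shows that the second summand has the same rank as $\nu_N\oplus K$; since $\image(\nabla)=\ker(\omega^\flat)\at{N}$ by the preceding proposition, $J^\sharp(\mathrm{Ann}(\image(\nabla)))$ is canonically identified with $DL\at{N}/\image(\nabla)\cong\nu_N\oplus K$, where the $K$-summand comes from $DL_N/\image(\nabla)$ and the $\nu_N$-summand from $\sigma\colon DL\at{N}/DL_N\to\nu_N$. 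On the $L_\nu$-side one has, exactly as before, $DL_\nu\at{N}=DL_N\oplus\nu_N$ with $\nu_N$ embedded through $\chi$, and $DL_N=\image(\nabla)\oplus K$ by the definition of a homogeneous cocontact transversal.

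Next I would check the behaviour of $D\Psi\at{N}$ on these splittings. Because $\Psi\at{N}=\id_{L_N}$, the differential $D\Psi$ fixes $DL_N$ and hence both summands $\image(\nabla)$ and $K$. For the $\chi$-image the computation is identical to Lemma~\ref{Lem: PropOme II}: using Equation~\ref{Eq: CommutationD} one obtains $D\Psi\circ\chi=\D^N\Delta=J^\sharp\circ\D^N\alpha$, and since $\alpha$ was chosen so that $\D^N\alpha$ takes values in $\mathrm{Ann}(\image(\nabla))$, this image lands inside $J^\sharp(\mathrm{Ann}(\image(\nabla)))$. Feeding this into the defining relation $\mathfrak{B}_\Psi(\mathcal{L}_J)=\mathfrak{B}_P(\mathfrak{B}_I(\mathcal{L}))^\omega$ and reading off the $J^1L$-components at $N$ expresses $\omega\at{N}$ on $\nu_N\oplus K$ through the inverse of $J^\sharp\at{\mathrm{Ann}(\image(\nabla))}$, which is precisely the formula defining $\Theta$; together with $\ker(\omega^\flat)\at{N}=\image(\nabla)$ this pins $\omega$ down on the complementary subbundle $\nu_N\oplus K$.

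The main obstacle, and the only genuine departure from the cosymplectic argument, is the extra rank-one slot $K$. In Lemma~\ref{Lem: PropOme II} one had $K=DL_N\cap\mathfrak{B}_I(\mathcal{L}_J)=\{0\}$, the kernel of $\omega\at{N}$ was all of $DL_N$, and it sufficed to match $\omega$ and $\Theta$ on the diagonal block $\nu_N\times\nu_N$. Here $K$ is one-dimensional and no longer lies in $\ker(\omega^\flat)\at{N}$, so beyond the diagonal block one must also reproduce the off-diagonal pairing $\omega\at{\nu_N\times K}$. What makes this work is that the defining formula $\Theta(X,Y)=(J^\sharp\at{\mathrm{Ann}(\image(\nabla))})^{-1}(X)(Y)$ permits the second argument $Y$ to range over all of $DL$, in particular over $K$: the covector $(J^\sharp\at{\mathrm{Ann}(\image(\nabla))})^{-1}(X)\in\mathrm{Ann}(\image(\nabla))$ kills $\image(\nabla)$ but need not annihilate $K$. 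Checking that this off-diagonal value coincides with $\omega\at{\nu_N\times K}$, while $\omega\at{K\times K}=0$ holds automatically by skew-symmetry, is the delicate step; the diagonal block is then handled exactly as in the cosymplectic case, and the identification $\omega\at{\nu_N\oplus K}=\Theta$ follows.
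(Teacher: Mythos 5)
Your proposal is correct and follows the same route as the paper, whose entire proof is the observation that the fat tubular neighbourhood transports $J^\sharp(\mathrm{Ann}(\image(\nabla)))$ onto $\nu_N\oplus K$ followed by a verbatim repetition of the cosymplectic argument of Lemma \ref{Lem: PropOme II}. You in fact supply more detail than the paper does, in particular on the off-diagonal block $\omega\at{\nu_N\times K}$, which the paper leaves implicit in its ``copy and paste''.
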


\begin{proof}
Using the ideas of the proof of Lemma \ref{Lem: PropOme II}, we can show that the fat tubular neighbourhood transports
$J^\sharp(\mathrm{Ann}(\image(\nabla))$ to $\nu_N\oplus K$, hence the proof is copy and paste of this Lemma.
\end{proof}

\begin{theorem}[Normal Form  for Jacobi bundles II]\label{Thm: LocNorFor II}
Let $L\to M$ be a line bundle, let $J$ be a Jacobi structure and let $N\to M$ be a cocontact transversal with connection 
$\nabla\colon TN\to DL_N$. For a closed 
2-form 
$\omega\in \Omega_{L_\nu}^2(\nu_N)$, such that $\ker(\omega^\flat)\at{N}=\image(\nabla)$ and $\omega$ coincides with $\Theta$ 
at $\nu_N\oplus (\mathfrak{B}_I (\mathcal{L}_J)\cap DL_N)\subseteq DL_\nu$. Then
\begin{align*}
\mathfrak{B}_P(\mathcal{L}_N)^\omega
\end{align*}
is the graph of a Jacobi structure near the zero section and there exists a fat tubular neighbourhood $\Psi\colon L_\nu\to 
L_U$ which is a Jacobi map near the zero section. 
\end{theorem}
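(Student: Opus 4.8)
The plan is to follow the proof of Theorem \ref{Thm: LocNorFor I} almost verbatim, with the kernel $DL_N$ of the cosymplectic case replaced by $\image(\nabla)$ and the transverse complement $\nu_N$ replaced by $\nu_N\oplus K$, where $K:=DL_N\cap\mathfrak{B}_I(\mathcal{L}_J)$. First I would settle the theorem for the distinguished 2-form
\[
\omega=\Psi^*\int_0^1\frac{1}{t}(\Phi^\Delta_{\log(t)})^*\D_L\alpha\,\D t
\]
constructed above. By Theorem \ref{Thm: NormFormDJ} one has $\mathfrak{B}_P(\mathcal{L}_N)^\omega=\mathfrak{B}_\Psi(\mathcal{L}_J)$, and since the fat tubular neighbourhood $\Psi$ is a regular line bundle isomorphism, the backwards transform $\mathfrak{B}_\Psi$ intertwines the projections $\pr_D$ and the pairing, hence preserves the transversality $DL\cap(-)=\{0\}$. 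As $\mathcal{L}_J$ is a Jacobi graph, Proposition \ref{Prop: DJtoJ} then shows that $\mathfrak{B}_P(\mathcal{L}_N)^\omega$ is the graph of a Jacobi structure near the zero section, with $\Psi$ itself the required Jacobi map.

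Next I would reduce an arbitrary admissible $\omega'$ to this case by a linear interpolation, and the key point is to prove $\omega\at{N}=\omega'\at{N}$. Here I would use the homogeneous cocontact hypothesis $\image(\nabla)\oplus K=DL_N$, which combined with $DL_\nu\at{N}=DL_N\oplus\nu_N$ yields the splitting
\[
DL_\nu\at{N}=\image(\nabla)\oplus(\nu_N\oplus K).
\]
Any 2-form satisfying the hypotheses of the theorem must vanish on $\image(\nabla)$ (its kernel at $N$) and coincide with the nondegenerate form $\Theta$ on $\nu_N\oplus K$, so its restriction to $N$ is completely determined; hence $\omega\at{N}=\omega'\at{N}$. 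Setting $\sigma_t:=t(\omega'-\omega)$ I obtain a path of closed 2-forms with $\sigma_0=0$ and $\sigma_t\at{N}=0$, and the convex combination $\omega+\sigma_t=(1-t)\omega+t\omega'$ still meets all the hypotheses.

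Finally I would run the Moser argument. Being the graph of a Jacobi structure is the open condition $DL_\nu\cap(-)=\{0\}$, which holds at $N$ for $\omega$ by the first step; since $\sigma_t\at{N}=0$ it persists for every $\omega+\sigma_t$ on a common neighbourhood of the zero section, uniformly for $t\in[0,1]$ by compactness. Thus
\[
\big(\mathfrak{B}_P(\mathcal{L}_N)^\omega\big)^{\sigma_t}=\mathfrak{B}_P(\mathcal{L}_N)^{\omega+\sigma_t}
\]
is a smooth family of Jacobi graphs joining the distinguished one to the given one, all agreeing at $N$. Appendix \ref{App: Moser} then supplies a line bundle isotopy fixing $N$ and intertwining the two Jacobi structures, and composing it with $\Psi$ produces the desired fat tubular neighbourhood that is a Jacobi map near the zero section. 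I expect the principal obstacle to be precisely the identity $\omega\at{N}=\omega'\at{N}$, that is, checking that the connection $\nabla$ and the rank-one direction $K$ conspire so that the kernel-and-$\Theta$ data pin down the full restriction to $N$; once that is in place, the Moser step (and its compatibility with the line bundle structure) is formally identical to the cosymplectic case.
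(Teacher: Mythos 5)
Your proposal is correct and follows essentially the same route as the paper, which simply declares the proof to be that of Theorem \ref{Thm: LocNorFor I} ``with the obvious adaptions''. You have in fact spelled out the one adaptation that is not entirely obvious, namely that the splitting $DL_\nu\at{N}=\image(\nabla)\oplus(\nu_N\oplus K)$ forces $\omega\at{N}=\omega'\at{N}$ so that $\sigma_t\at{N}=0$ and the Moser argument of Appendix \ref{App: Moser} applies verbatim.
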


\begin{proof}
The proof follows the lines of Theorem \ref{Thm: LocNorFor I} with the obvious adaptions. 
\end{proof}

The next step is to prove the second splitting Theorem of Dazord and Lichnerowicz and Marle in \cite{SplittingThmJac}, namely 
the splitting of Jacobi manifolds around contact leaves. 

\begin{theorem}\label{Thm: SplittingII}
Let $L\to M $ be a line bundle, let $J\in \Secinfty(\Anti^2 (J^1L)^*\tensor L)$ be a Jacobi tensor and let $p_0\in M$ be a 
contact point. 
Then there are a line bundle trivialization $L_U\cong U\times \mathbb{R}$ around $p_0$ and a homogeneous cocontact 
transversal 
$N\hookrightarrow U$, such that $U\cong U_{2q+1}\times N$ for an open subset $0\in U_{2q+1}\subseteq \mathbb{R}^{2q+1}$ and the 
corresponding Jacobi pair $(\Lambda,E)$ is transformed (via
this isomorphism) to 
	\begin{align*}
	(\Lambda,E)=(\Lambda_{\mathrm{can}}+\pi_N+ E_\mathrm{can}\wedge Z_N, E_\mathrm{can}),
	\end{align*}
where $(\pi_N, Z_N)$ is the induced homogeneous Poisson structure on the transversal $N$ and the contact structure on the fiber is given by $(\Lambda_{\mathrm{can}}, E_\mathrm{can})
=((p_i\frac{\partial}{\partial u}+\frac{\partial}{\partial q^i})
\wedge \frac{\partial}{\partial q_i},\frac{\partial}{\partial u})$ . 	  
\end{theorem}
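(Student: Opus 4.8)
The plan is to follow the strategy of the proof of Theorem~\ref{Thm: SplittingI}, replacing the cosymplectic data by homogeneous cocontact data and invoking Theorem~\ref{Thm: LocNorFor II} instead of Theorem~\ref{Thm: LocNorFor I}. First I would reduce to the trivial line bundle: trivialising $L$ around $p_0$ and shrinking, I may assume $L_U\cong U\times\mathbb{R}$, so that I am in the setting of Remark~\ref{Rem: TrivLine}, where $\D_L=\D_{dR}+\mathbbm{1}^*\wedge$ and $\D_L\mathbbm{1}^*=0$. I then choose an arbitrary transversal $N$ to the contact leaf $S$ through $p_0$, so that locally $M\cong S\times N$. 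Since $p_0$ is a contact, hence pre-contact, point, Lemma~\ref{Lem: MinTrnsConLf} gives $\rank(DL_N\cap\mathfrak{B}_I(\mathcal{L}_J))=1$ in a neighbourhood of $p_0$, so $N$ is a cocontact transversal there; equipping $N$ with any flat connection $\nabla\colon TN\to DL_N$ promotes it to a homogeneous cocontact transversal (by the Proposition preceding this subsection), and by Lemma~\ref{Lem: HomPoi} the transversal carries the induced homogeneous Poisson structure $(\pi_N,Z_N)$.

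Next I would normalise the fibre. The normal bundle satisfies $\nu_N\cong T_{p_0}S\times N\cong\mathbb{R}^{2q+1}\times N$, and since $L$ is trivial I may regard $\nu_N\oplus K$, with $K=DL_N\cap\mathfrak{B}_I(\mathcal{L}_J)=\langle\mathbbm{1}\rangle$, together with the non-degenerate form $\Theta$ as a contact-type symplectic vector bundle of rank $2q+2$. A fibrewise Darboux normalisation, realised by a vector bundle automorphism of $\nu_N$ and a shrinking of $N$, produces fibre coordinates $(q^i,p_i,u)$ in which $\Theta$ is constant and standard. I would then take the natural $L_\nu$-valued $2$-form
	\begin{align*}
	\omega=\D q^i\wedge\D p_i+\mathbbm{1}^*\wedge(\D u-p_i\D q^i)\in\Omega^2_{L_\nu}(\nu_N).
	\end{align*}
A short computation with $\D_L=\D_{dR}+\mathbbm{1}^*\wedge$ shows $\D_L\omega=0$, and contracting $\omega\at{N}=\D q^i\wedge\D p_i+\mathbbm{1}^*\wedge\D u$ with the fibre derivations $\partial_{q^i},\partial_{p_i},\partial_u,\mathbbm{1}$ shows it is non-degenerate on $\nu_N\oplus K$, whence $\ker(\omega^\flat)\at{N}=\image(\nabla)$ and $\omega$ restricts to $\Theta$ on $\nu_N\oplus K$. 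Thus $\omega$ fulfills all hypotheses of Theorem~\ref{Thm: LocNorFor II}.

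Applying Theorem~\ref{Thm: LocNorFor II} then shows that $\mathfrak{B}_P(\mathcal{L}_N)^\omega$ is the graph of a Jacobi structure near the zero section and that the fat tubular neighbourhood $\Psi\colon L_\nu\to L_U$ is a Jacobi map there, which provides the diffeomorphism $U\cong U_{2q+1}\times N$ in the claimed trivialisation. The final step is to read off the Jacobi pair in the fibre coordinates $(q^i,p_i,u)$: the twisting by $\omega$ produces the canonical contact Jacobi structure $(\Lambda_{\mathrm{can}},E_{\mathrm{can}})$ on the fibre, $\mathfrak{B}_P(\mathcal{L}_N)$ contributes the homogeneous Poisson structure $(\pi_N,Z_N)$ on the transversal, and the cross term $E_{\mathrm{can}}\wedge Z_N$ arises from the $\psi(Z_N)\mathbbm{1}^*$ coupling in the description of $\mathfrak{B}_\Psi(\mathcal{L})$ (compare the Corollary following the definition of homogeneous cocontact transversals). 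This yields exactly $(\Lambda,E)=(\Lambda_{\mathrm{can}}+\pi_N+E_{\mathrm{can}}\wedge Z_N,E_{\mathrm{can}})$.

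I expect the main obstacle to be the construction and verification of $\omega$ on the odd-dimensional fibre. In contrast with the locally conformal symplectic case, the extra Reeb/homogeneity direction $u$ forces a careful treatment of how $\mathbbm{1}^*$ couples to the fibre coordinates: one must simultaneously arrange $\D_L$-closedness, the precise kernel $\image(\nabla)$ along $N$, and agreement with $\Theta$ on $\nu_N\oplus K$, and then track this coupling through the $B$-field twist to recover the cross term $E_{\mathrm{can}}\wedge Z_N$. The remaining bookkeeping, namely the fibrewise Darboux normalisation of $\Theta$ and the identification of the pulled-back structure with the stated normal form, is routine once $\omega$ is in place.
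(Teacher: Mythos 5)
Your proposal is correct and follows essentially the same route as the paper: trivialise $L$, take a minimal transversal equipped with a flat connection to obtain a homogeneous cocontact transversal, fibrewise Darboux-normalise $\Theta$ on $\nu_N\oplus K$, choose the explicit $\D_L$-closed form $\omega=\D q^i\wedge\D p_i+\mathbbm{1}^*\wedge(\D u-p_i\D q^i)$, and apply Theorem \ref{Thm: LocNorFor II}. The only imprecision is that away from $p_0$ the line $K$ is spanned by $\mathbbm{1}-Z$ rather than by $\mathbbm{1}$ (the paper builds its Darboux frame from $\mathbbm{1}-Z$ paired against a generator of $\ker(\Theta\at{\nu_N})$), but since $Z$ is tangent to $N$ this does not affect your verification that $\omega$ agrees with $\Theta$ on $\nu_N\oplus K$.
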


\begin{proof}
Let $p_0\in M$ be a contact point and let $N\subseteq M$ be a transversal, such that 
	\begin{align*}
	\sigma(\image J^\sharp)\at{p_0} \oplus T_{p_0}N=T_{p_0}M.
	\end{align*}	 
We can again assume that the line bundle $L\to M$ is trivial, since we want to prove a local statement. 
In a possibly smaller neighbourhood, we can assume that also the normal bundle $\nu_N=V\times N\to N$ is trivial. 
We want to show that there is a trivialization of $\nu_N$, such that $\Theta$ looks trivial, where we specialize on the way 
through the proof what we mean by trivial. Let us therefore denote by $\lambda$ the local trivializing section of $L_N$, thus 
we can write
	\begin{align*}
	\Theta(\Delta, \Box)=\Omega(\Delta, \Box)\cdot \lambda
	\end{align*}	  
for $\Delta,\Box \in \nu_N\oplus K$. Since $L_N\to N$ is trivial, we identify $DL_N= TN\oplus \mathbb{R}_N$ and choose the 
trivial connection $\nabla$. Hence, we can find a (local) nowhere vanishing section of $K$ of the form $\mathbbm{1}- Z$ for a 
unique $Z$. Let us now shrink 
	\begin{align*}
	\Theta\at{\nu_N}\colon \nu_N\times\nu_N\to L_N, 
	\end{align*}	 
since $\nu_N$ is odd dimensional and $\Theta$ is a skew-symmetric pairing, we can find a local non-vanishing
 $X\in \Secinfty(\nu_N)$, such 
that $\Theta(X,\cdot)=0$, moreover, since $\Theta$ is non-degenerate, we can modify $X$ in such a way that 
	\begin{align*}
	\Omega(\mathbbm{1}-Z, X)=1.
	\end{align*}	
It is  now easy to see that symplectic complement  $S:=\langle \mathbbm{1}-Z, X\rangle^\perp\subseteq \nu_N$. Finally, 
we find a trivialization of $S$ such that $\Omega\at{S}$ is the trivial symplectic form with Darboux frame 
$\{e_2,e_{k+2},\dots \}$. Hence, by extending this 
trivialization to $\nu_N=V\times N$ by using the coordinate $X$ as $e_0$, we find that 
$\{e_0,\mathbbm{1}-Z, e_1,e_{k+1},e_2,e_{k+2},
\dots\}$ is a Darboux frame of $\Omega$ in this trivialization. with the decomposition $DL_\nu=TV\oplus TN \oplus 
\mathbb{R}_{\nu_N}$ we can 
choose 
	\begin{align*}
	\omega=\sum_{i=1}^k dx^i\wedge dx^{i+k} + \mathbbm{1}^*\wedge (dx^0-\sum_{i=1}^k x^{i+k}dx^i)
	\end{align*}	   
which coincides with $\Theta$ on $\nu_N\oplus K$ and is $\D_L$-closed. By applying Theorem \ref{Thm: LocNorFor II}, since $N$ together with 
$\nabla$ is a homogeneous cocontact transversal, we find a Jacobi morphism  
	\begin{align*}
	\mathfrak{B}_{P}(\mathcal{L}_N)^\omega\cong \mathcal{L}_J.
	\end{align*}
An easy computation shows that $\mathfrak{B}_{P}(\mathcal{L}_N)^\omega$ is the graph of the Jacobi structure of the 
form in the theorem. 
\end{proof}

\section{Application: Splitting theorem for homogeneous Poisson Structures}
Using the homogenezation scheme from \cite{BGG2017}, one can see that Jacobi bundles are nothing else but special kinds of 
homogeneous Poisson manifolds. Moreover, the two most important examples of Poisson manifolds are of this kind: the cotangent 
bundle and the dual of a Lie algebra. 
Using this insight, it is easy to see that proving something for Jacobi structures gives a proof for something in homogeneous 
Poisson Geometry. We want to apply this philosophy to give a splitting theorem for homogeneous Poisson manifolds.
The first appearance of such a theorem was  \cite[Theorem 5.5]{SplittingThmJac} in order 
to prove the local splitting of 
Jacobi pairs. Here we want to attack the problem from the other side: we use the splitting of Jacobi manifolds to prove the 
splitting of homogeneous Poisson structures.  

\begin{theorem}
Let $(\pi,Z)$ be a homogeneous Poisson structure on a manifold $M$ and let $p_0\in M$ be a point such that $Z_{p_0}\neq 0$. Then there exist an open neighbourhood $U$ of $p_0$,  an open neighbourhood $U_{2k}$ of $0\in \mathbb{R}^{2k}$, 
a manifold $N$ with a homogeneous Poisson structure $(\pi_N,Z_N)$ and a diffeomorphism $\psi\colon U\to U_{2k}\times N$, such that 
	\begin{align*}
	\psi_* \pi = \frac{\partial}{\partial p_i}\wedge\frac{\partial}{\partial q^i} +\pi_N.
	\end{align*}
Additionally, 
	\begin{enumerate}
	\item if $Z\in \image(\pi^\sharp)$, then $\psi_*Z=p_i\frac{\partial}{\partial p_i} +\frac{\partial}{\partial p_k}+Z_N$.
	\item if $Z\notin \image(\pi^\sharp)$, then $\psi_*Z=p_i\frac{\partial}{\partial p_i}+Z_N$.
	\end{enumerate}

\end{theorem}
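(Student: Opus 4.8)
The plan is to reduce the statement to the splitting theorems for Jacobi bundles (Theorems \ref{Thm: SplittingI} and \ref{Thm: SplittingII}) by running the homogenization scheme of \cite{BGG2017} backwards. Since the claim is local and $Z_{p_0}\neq 0$, I would first rectify the homogeneity: choose coordinates near $p_0$ in which $Z=\partial_\tau$, and let $\bar M=\{\tau=0\}$ be a hypersurface transverse to $Z$. In these coordinates the homogeneity condition $\Lie_Z\pi=-\pi$ reads $\partial_\tau(\pi^{ab})=-\pi^{ab}$, so the components of $\pi$ depend on $\tau$ only through the factor $\E^{-\tau}$, whence $\pi=\E^{-\tau}(\Lambda+\partial_\tau\wedge E)$ for a $\tau$-independent bivector $\Lambda$ and vector field $E$ on $\bar M$. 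A direct computation shows that $[\pi,\pi]=0$ is equivalent to $(\Lambda,E)$ being a Jacobi pair on $\bar M$; this de-homogenization identifies $(M,\pi,Z)$ locally with the homogenization of $(\Lambda,E)$ on the trivial line bundle over $\bar M$, with $\partial_\tau$ playing the role of the Euler vector field $\mathcal{E}=s\partial_s$, $s=\E^\tau$.

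The second step is to match the dichotomy in the statement with the dichotomy between the two Jacobi splitting theorems. Writing $\bar p_0$ for the image of $p_0$ in $\bar M$, the explicit form $\pi^\sharp(c\,\D\tau+\beta)=\E^{-\tau}\big(cE+\Lambda^\sharp(\beta)-\beta(E)\,\partial_\tau\big)$ (for $\beta\in T^*\bar M$) gives a short linear-algebra argument: $Z=\partial_\tau\in\image(\pi^\sharp)$ at $p_0$ if and only if there is a covector annihilating $\image(\Lambda^\sharp)$ but not $E$, i.e. if and only if $E\notin\image(\Lambda^\sharp)$ at $\bar p_0$, which is precisely the condition that $\bar p_0$ be a contact point of $(\Lambda,E)$; dually $Z\notin\image(\pi^\sharp)$ corresponds exactly to a locally conformal symplectic point. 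Hence case (i) is governed by Theorem \ref{Thm: SplittingII} and case (ii) by Theorem \ref{Thm: SplittingI}.

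The third step is to apply the relevant Jacobi splitting on $\bar M$ and to homogenize the resulting normal form. In case (ii), Theorem \ref{Thm: SplittingI} yields $\bar M\cong U_{2q}\times N_0$ with $(\Lambda,E)=(\pi_{\mathrm{can}}+\Lambda_{N_0}+E_{N_0}\wedge Z_{\mathrm{can}},E_{N_0})$; homogenizing and performing the rescaling $p_i\mapsto \E^{\tau}p_i$ absorbs the conformal factor $\E^{-\tau}$ in the canonical block $\partial_{p_i}\wedge\partial_{q^i}$ and, as one checks, cancels the cross term $\E^{-\tau}E_{N_0}\wedge Z_{\mathrm{can}}$ against the $\partial_\tau$-part of the homogenized transversal bivector, leaving $\partial_{p_i}\wedge\partial_{q^i}+\pi_N$ on $U_{2q}\times N$ with $N=N_0\times\mathbb{R}_\tau$ carrying the homogenization $(\pi_N,Z_N)$ of $(\Lambda_{N_0},E_{N_0})$; here $k=q$, and the same rescaling turns $Z=\partial_\tau$ into $p_i\partial_{p_i}+Z_N$. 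In case (i), Theorem \ref{Thm: SplittingII} produces the contact model in the fibre, and the homogenization direction $\tau$ together with the contact coordinate $u$ completes this block to the symplectization of the contact factor, i.e. to the canonical symplectic $\mathbb{R}^{2q+2}$ with $k=q+1$, while the transversal already carries a homogeneous Poisson structure $(\pi_N,Z_N)=(\pi_{N_0},Z_{N_0})$ which is left untouched; the residual homogeneity of the symplectization accounts for the extra summand $\partial/\partial p_k$ in $\psi_*Z$.

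The conceptual content lies entirely in the first two steps; the main obstacle is the bookkeeping of the last one, namely verifying that the explicit coordinate change — the rescaling by $\E^{\tau}$ in case (ii), respectively the symplectization of the contact block in case (i) — simultaneously absorbs the conformal factor $\E^{-\tau}$ and all the cross terms coupling the fibre to the transversal, so that the homogenized split structure is genuinely a product of the canonical symplectic piece with $(\pi_N,Z_N)$. This is a routine but slightly lengthy verification, which I would carry out separately in the two coordinate models.
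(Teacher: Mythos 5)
Your proposal takes the same route as the paper's proof: rectify $Z$ (the paper uses $Z=u\,\frac{\partial}{\partial u}$, i.e.\ $u=\E^{\tau}$), de-homogenize via $\pi=\frac{1}{u}(\Lambda+u\frac{\partial}{\partial u}\wedge E)$ to a Jacobi pair $(\Lambda,E)$ on a slice, identify $Z\in\image(\pi^\sharp)$ with contact points and $Z\notin\image(\pi^\sharp)$ with locally conformal symplectic points of $(\Lambda,E)$, and then feed Theorems \ref{Thm: SplittingI} and \ref{Thm: SplittingII} back through the homogenization. Your dichotomy argument is correct, and your explicit treatment of case (ii) (the rescaling $p_i\mapsto\E^{\tau}p_i$, the cancellation of the cross terms, and the resulting $\psi_*Z=p_i\frac{\partial}{\partial p_i}+Z_N$) is in fact more detailed than the paper, which only writes out case (i).

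However, in case (i) the coordinate changes you commit to are not yet sufficient, and this is where the paper's proof has content your sketch is missing. First, the splitting theorems deliver the normal form only up to a change of line bundle trivialization, so what one actually obtains is $\Lambda=\frac{1}{a}(\Lambda_{\mathrm{can}}+\pi_N+E_{\mathrm{can}}\wedge Z_N)$ and $E=\frac{1}{a}(E_{\mathrm{can}}+\Lambda^\sharp(\D a))$ for a nonvanishing function $a$; on the homogenized side this conformal factor must first be absorbed by the fibrewise diffeomorphism $u\mapsto a\cdot u$. Second, and more importantly, symplectizing the contact block turns $\frac{1}{u}(\Lambda_{\mathrm{can}}+u\frac{\partial}{\partial u}\wedge E_{\mathrm{can}})$ into the canonical symplectic bivector, but it does nothing to the cross term $\frac{1}{u}E_{\mathrm{can}}\wedge Z_N$, which couples the contact fibre to the transversal and survives any coordinate change respecting the product decomposition. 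The paper removes it with the diffeomorphism $\Phi(u,x^1,\dots,x^q)=(u,\Phi^{Z_N}_{\log(u)}(\Phi^{E_{\mathrm{can}}}_{-\log(u)}(x^1,\dots,x^q)))$, which genuinely mixes the two factors; without this extra flow twist (or an equivalent device) your deferred ``routine verification'' would stall exactly at this term.
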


\begin{proof}
Note that since $Z_{p_0}\neq 0$, we find coordinates 
$\{u,x^1,\dots, x^q\}$ with $p_0=(1,0,\dots,0)$, such that $Z=u\frac{\partial}{\partial u}$. In this chart, we have, using $\Lie_Z\pi=-\pi$,  
	\begin{align*}
	\pi =\frac{1}{u}(\Lambda + u\frac{\partial}{\partial u}\wedge E) 
	\end{align*}	 
for unique $\Lambda\in \Secinfty(\Anti^2 TM)$ and $E\in \Secinfty(TM)$ which do not 
depend on $u$. It is easy to see, that we 
have 
	\begin{align*}
	[\Lambda,\Lambda]=-E\wedge \Lambda \text{ and } \Lie_E\Lambda=0,
	\end{align*}	 
which means that $(\Lambda,E)$ is a Jacobi pair. This allows us to use Theorem \ref{Thm: SplittingI} and 
Theorem \ref{Thm: SplittingII} to prove the result. We will do it just for the case where $p_0$ is a contact point, which 
means, translated to Jacobi pairs, that $E_{p_0}$ is transversal to $\image(\Lambda^\sharp)\at{p_0}$ and thus 
$Z\in \image(\pi^\sharp)$, since the other case is exactly the same.
 Note that, we can apply Theorem \ref{Thm: SplittingII}: there exists coordinates 
$\{x,q^i,p_i, y^j\}$ and a local non-vanishing function $a$( which is basically the line bundle trivialization), such that 
	\begin{align*}
	\Lambda = \frac{1}{a}(\Lambda_{\mathrm{can}}+\pi_N+ E_{\mathrm{can}}\wedge Z_N)
	\text{ and } 
	E=\frac{1}{a}(E_{\mathrm{can}}+\Lambda^\sharp(\D a)),
	\end{align*}
where 	$\Lambda_{\mathrm{can}}$ and $E_{\mathrm{can}} $ are just depending on $\{x,q^i,p_i\}$ and $(\phi_N,Z_N)$ is a homogeneous Poisson structure just depending on $y^j$-coordinates.

If we apply the diffeomorphism $(u,x^1,\dots,x^q)\mapsto (a\cdot u,x^1,\dots,x^q)$, we have 	
	\begin{align*}
	\pi=\frac{1}{u}(\Lambda_{\mathrm{can}}+\pi_N+ E_{\mathrm{can}}\wedge Z_N+u\frac{\partial}{\partial u}
	\wedge  E_{\mathrm{can}}).
	\end{align*}
A (quite) long and not very insightful computation shows that the  diffeomorphism 
\begin{align*}
\Phi(u,x^1,\dots,x^q)=(u,\Phi^{Z_N}_{\log(u)}(\Phi^{E_{\mathrm{can}}}_{-log(u)}(x^1,\dots,x^q))),
\end{align*}
where $\Phi^{Z_N}_t$ (resp. $\Phi^{E_{\mathrm{can}}}_t$) is the flow uf $Z_N$ (resp. $E_{\mathrm{can}}$),
gives us 
	\begin{align*}
	\pi = \frac{1}{u}( \frac{\partial}{\partial p_i}\wedge\frac{\partial}{\partial q^i})+ \frac{\partial}{\partial u}\wedge
	\frac{\partial}{\partial x}+\pi_N \text{ and } Z=u\frac{\partial}{\partial u}+p_i\frac{\partial}{\partial p_i}+Z_N
	\end{align*}	 
and with some obvious variations and renaming coordinates of $\pi$ we get the result. 
\end{proof}

This Application shows us that, eventhough we can see Poisson structures as Jacobi manifolds, which suggests that they are 
more general objects than Poisson structures, the splitting theorems (of Jacobi pairs) are a refinement of the known 
splitting theorems for Poisson structures.

\section{Generalized Contact bundles}
In this last section, we want to drop a word about generalized contact bundles. They were introduced recently in 
\cite{VW2016} and they are modeled to be the odd dimensional analogue to generalized complex structures. 

\begin{definition}\label{Def: GenCon}
Let $L\to M$ be a line bundle. A subbundle $\mathcal{L}\subseteq \mathbb{D}_\mathbb{C}L$ is called generalized contact structure on $L$, if 
	\begin{enumerate}
	\item $\mathcal{L}$ is a (complex) Dirac-Jacobi structure
	\item $\mathcal{L}\cap\cc{\mathcal{L}}=\{0\}$
	\end{enumerate}
\end{definition}
A generalized contact structure can be also seen as an endomorphism of $\mathbb{D}L$ of the form 
	\begin{align*}
	\begin{pmatrix}
	\phi & J^\sharp \\
	\alpha^\flat & \phi^* 
	\end{pmatrix},
	\end{align*}
where $\phi\in \mathrm{End}(DL)$, $J\in \Secinfty((J^1 L)^*\tensor L)$ and $\alpha\in \Omega_L^2(M)$ 
(see \cite{VW2016} and  \cite{2017arXiv171108310S}). This endomorphism has to fulfill certain properties: it has 
to be almost complex, compatible with the pairing and integrable, which we do not explain what it means here and refer the 
reader to \cite{VW2016}.  The $+\I$-Eigenbundle produces a generalized contact structure in the sense of Definition 
\ref{Def: GenCon}. Moreover, we have that among many more conditions that $J$ is a Jacobi structure. Let us now pick a 
(cosymplectic or cocontact) transversal to $J$ together with an Euler-like derivation $\Delta=J^\sharp (\alpha)$, then 
$(\Delta, \I\alpha-\phi^*(\alpha))\in\Secinfty(\mathcal{L})$.  With the techniques from Section \ref{Sec: NormFroDJ} and 
Section \ref{Sec: NormForJ}, one can show that 
	\begin{align*}
	\mathfrak{B}_\Psi(\mathcal{L})=\mathfrak{B}_{I\circ P}(\mathcal{L})^{\I\omega+\beta},
	\end{align*}
where $\omega=\int_{0}^{1}\frac{1}{t}(\Phi_{\log(t)}^\Delta)^*\D_L\alpha\D t$ and $\beta=-\int_{0}^{1}\frac{1}{t}
(\Phi_{\log(t)}^\Delta)^*\D_L\phi^*(\alpha)\D t$.
This is nothing else but a normal form for generalized contact bundles. This can be pushed more forward to prove a local splitting of generalized bundles, but this has already be done in \cite{2017arXiv171108310S} with similar techniques.

\begin{appendix}
\section{The Moser trick for Jacobi manifolds}\label{App: Moser}
Let  $J\in \Secinfty(\Anti^2 (J^1L)^*\tensor L)$ be a Jacobi structure on a line bundle $L\to M$. Moreover, we assume 
having  smooth family of closed 2-forms $\sigma_t$, such that $\sigma_0=0$ and $\mathcal{L}_J^{\sigma_t}$ is a Jacobi structure for all $t$,
denoted by $J_t$. 
For 
	\begin{align*}
	\alpha_t:=-\frac{\partial}{\partial t}\iota_\mathbbm{1} \sigma_t
	\end{align*}
the equation 
	\begin{align*}
	\frac{\partial}{\partial t}\sigma_t=-\D_L\alpha_t
	\end{align*}
holds.
We define the Moser-derivation by 
	\begin{align*}
	\Delta_t:= -J_t^\sharp(\alpha_t)
	\end{align*}
and its flow by $\Phi_t\in \Aut(L)$, where we assume it exists for on open subset containing $[0,1]$. 
Let us  compute
	\begin{equation}\label{Eq: Moser}
	\begin{aligned}
	\frac{\D}{\D t} \Phi^*_tJ_t &
	=\Phi_t^*([\Delta_t,J_t]+\frac{\D}{\D t} J_t)\\&
	=\Phi_t^*(-[J_t^\sharp(\alpha_t),J_t]+\frac{\D}{\D t} J_t)\\&
	=\Phi^*_t(J_t^\sharp(-\D_L\alpha_t)+\frac{\D}{\D t} J_t).
	\end{aligned}
	\end{equation}
It is easy to see that 
	\begin{align*}
	J_t^\sharp=J^\sharp\circ (\id+ \sigma_t^\flat \circ J^\sharp)^{-1}
	\end{align*}
and hence we can compute
	\begin{align*}
	\frac{\D}{\D t} J_t^\sharp &
	=\frac{\D}{\D t} J^\sharp\circ(\id+ \sigma_t^\flat \circ J^\sharp)^{-1}\\&
	=- J^\sharp\circ  (\id+ \sigma_t^\flat \circ J^\sharp)^{-1}\circ(\frac{\D}{\D t}(\id+ \sigma_t^\flat \circ J^\sharp))
	\circ (\id+ \sigma_t^\flat \circ J^\sharp)^{-1}\\&
	=- J^\sharp\circ  (\id+ \sigma_t^\flat \circ J^\sharp)^{-1}\circ
	((\frac{\partial}{\partial t}\sigma_t)^\flat\circ J^\sharp)
	\circ (\id+ \sigma_t^\flat \circ J^\sharp)^{-1}\\&
	=-J_t^\sharp \circ (\frac{\partial}{\partial t}\sigma_t)^\flat \circ J_t^\sharp\\&
	=(-J_t^\sharp(\frac{\partial}{\partial t}\sigma_t))^\sharp\\&
	= \big(J_t^\sharp(\D_L\alpha_t)\big)^\sharp, 
	\end{align*}	 
and hence $\frac{\D}{\D t} J_t=J_t^\sharp(\D_L\alpha_t)$. If we use this equality in Equation \ref{Eq: Moser}, we find 
	\begin{align*}
	\frac{\D}{\D t} \Phi^*_tJ_t=0,
	\end{align*}
so we finally have $J=\Phi^*_0J_0=\Phi_1^* J_1$ and hence the two Jacobi structures are isomorphic. 	
\end{appendix}
\bibliographystyle{plain}

\end{document}